\newcommand{\BA}{\ensuremath{\mathbb {A}}\xspace}
\newcommand{\BD}{\ensuremath{\mathbb {D}}\xspace}
\newcommand{\BM}{\ensuremath{\mathbb {M}}\xspace}
\newcommand{\BN}{\ensuremath{\mathbb {N}}\xspace}
\newcommand{\BP}{\ensuremath{\mathbb {P}}\xspace}
\newcommand{\BQ}{\ensuremath{\mathbb {Q}}\xspace}
\newcommand{\BV}{\ensuremath{\mathbb {V}}\xspace}
\newcommand{\BX}{\ensuremath{\mathbb {X}}\xspace}
\newcommand{\BZ}{\ensuremath{\mathbb {Z}}\xspace}
\newcommand{\Fb}{\boldsymbol{\mathcal F}}
\newcommand{\CE}{\ensuremath{\mathcal {E}}\xspace}
\newcommand{\CF}{\ensuremath{\mathcal {F}}\xspace}
\newcommand{\CG}{\ensuremath{\mathcal {G}}\xspace}
\newcommand{\CL}{\ensuremath{\mathcal {L}}\xspace}
\newcommand{\CM}{\ensuremath{\mathcal {M}}\xspace}
\newcommand{\CN}{\ensuremath{\mathcal {N}}\xspace}
\newcommand{\CO}{\ensuremath{\mathcal {O}}\xspace}
\newcommand{\CP}{\ensuremath{\mathcal {P}}\xspace}
\DeclareMathOperator{\Coker}{Coker}
\DeclareMathOperator{\End}{End}
\DeclareMathOperator{\Gal}{Gal}
\newcommand{\GL}{{\mathrm{GL}}}
\DeclareMathOperator{\Hom}{Hom}
\newcommand{\id}{\ensuremath{\mathrm{id}}\xspace}
\let\Im\relax
\newcommand{\ideal}{\mathfrak}
\DeclareMathOperator{\Im}{Im}
\DeclareMathOperator{\Ker}{Ker}
\DeclareMathOperator{\length}{length}  
\DeclareMathOperator{\Lie}{Lie}
\DeclareMathOperator{\Nrd}{Nrd}
\DeclareMathOperator{\ord}{ord}
\DeclareMathOperator{\rank}{rank}
\DeclareMathOperator{\Res}{Res}
\DeclareMathOperator{\Spec}{Spec}
\DeclareMathOperator{\Spf}{Spf}
\newcommand{\SuetO}{O}
\newcommand{\suetO}{O}
\newcommand{\wt}{\widetilde}
\newcommand{\wh}{\widehat}
\newcommand{\ov}{\overline}
\newcommand{\lra}{\longrightarrow}
\newtheorem{theorem}{Theorem}[section]
\newtheorem{proposition}[theorem]{Proposition}
\newtheorem{lemma}[theorem]{Lemma}
\newtheorem {conjecture}[theorem]{Conjecture}
\newtheorem{corollary}[theorem]{Corollary}
\theoremstyle{definition}
\newtheorem{definition}[theorem]{Definition}
\newtheorem{remark}[theorem]{Remark}
\newtheorem{remarks}[theorem]{Remarks}
\numberwithin{equation}{section}
\newenvironment{altenumerate}
   {\begin{list}
      {\textup{(\theenumi)} }
      {\usecounter{enumi}
       \setlength{\labelwidth}{0pt}
       \setlength{\labelsep}{0pt}
       \setlength{\leftmargin}{0pt}
       \setlength{\itemsep}{\the\smallskipamount}
       \renewcommand{\theenumi}{\roman{enumi}}
      }}
   {\end{list}}
\numberwithin{equation}{section}
\renewcommand{\to}{%
   \ifbool{@display}{\longrightarrow}{\rightarrow}%
   }
\let\shortmapsto\mapsto
\renewcommand{\mapsto}{%
   \ifbool{@display}{\longmapsto}{\shortmapsto}%
   }
\newlength{\olen}
\newlength{\ulen}
\newlength{\xlen}
\newcommand{\xra}[2][]{%
   \ifbool{@display}%
      {\settowidth{\olen}{$\overset{#2}{\longrightarrow}$}%
       \settowidth{\ulen}{$\underset{#1}{\longrightarrow}$}%
       \settowidth{\xlen}{$\xrightarrow[#1]{#2}$}%
       \ifdimgreater{\olen}{\xlen}%
          {\underset{#1}{\overset{#2}{\longrightarrow}}}%
          {\ifdimgreater{\ulen}{\xlen}%
             {\underset{#1}{\overset{#2}{\longrightarrow}}}
             {\xrightarrow[#1]{#2}}}}%
      {\xrightarrow[#1]{#2}}
   }
\newcommand{\xyra}[2][]{%
   \settowidth{\xlen}{$\xrightarrow[#1]{#2}$}%
   \ifbool{@display}%
      {\settowidth{\olen}{$\overset{#2}{\longrightarrow}$}%
       \settowidth{\ulen}{$\underset{#1}{\longrightarrow}$}%
       \ifdimgreater{\olen}{\xlen}%
          {\mathrel{\xymatrix@M=.12ex@C=3.2ex{\ar[r]^-{#2}_-{#1} &}}}%
          {\ifdimgreater{\ulen}{\xlen}%
             {\mathrel{\xymatrix@M=.12ex@C=3.2ex{\ar[r]^-{#2}_-{#1} &}}}
             {\mathrel{\xymatrix@M=.12ex@C=\the\xlen{\ar[r]^-{#2}_-{#1} &}}}}}%
      {\mathrel{\xymatrix@M=.12ex@C=\the\xlen{\ar[r]^-{#2}_-{#1} &}}}%
   }
\newcommand{\xla}[2][]{%
   \ifbool{@display}%
      {\settowidth{\olen}{$\overset{#2}{\longleftarrow}$}%
       \settowidth{\ulen}{$\underset{#1}{\longleftarrow}$}%
       \settowidth{\xlen}{$\xleftarrow[#1]{#2}$}%
       \ifdimgreater{\olen}{\xlen}%
          {\underset{#1}{\overset{#2}{\longleftarrow}}}%
          {\ifdimgreater{\ulen}{\xlen}%
             {\underset{#1}{\overset{#2}{\longleftarrow}}}
             {\xleftarrow[#1]{#2}}}}%
      {\xleftarrow[#1]{#2}}
   }
\newcommand{\isoarrow}{%
   \ifbool{@display}{\overset{\sim}{\longrightarrow}}{\xrightarrow\sim}%
   }
\newsavebox{\lineone}
\newsavebox{\linetwo}
\newsavebox{\linethree}
\newlength{\lineonelen}
\newlength{\linetwolen}
\newlength{\linethreelen}
\newlength{\biggerlen}
\newcommand{\twolinestight}[2]{%
   \sbox{\lineone}{#1}%
   \sbox{\linetwo}{#2}%
   \settowidth{\lineonelen}{\usebox{\lineone}}%
   \settowidth{\linetwolen}{\usebox{\linetwo}}%
   \setlength{\biggerlen}{\maxof{\lineonelen}{\linetwolen}}%
   \begin{minipage}{\the\biggerlen}%
      \centering
      \usebox\lineone\\%
      \usebox\linetwo%
   \end{minipage}%
   }
\begin{document}



\title{On the Drinfeld moduli problem of $p$-divisible groups}
\author{M. Rapoport}
\author{Th. Zink}

\date{\today}
\maketitle

\tableofcontents
\section{Introduction}\label{s:intro}

Let $F$ be a finite extension of $\BQ_p$, with ring of integers $O_F$
and uniformizer $\pi$. In \cite{D}, Drinfeld defines a certain moduli
problem of $p$-divisible groups.  Let us recall Drinfeld's theorem.

A {\it formal $O_F$-module}  over a scheme $S$ such that $p$ is locally nilpotent in $\CO_S$ is a  $p$-divisible group
$X$ with an action of $O_F$,
\begin{displaymath}
   \iota: O_F \longrightarrow \End X\, .
\end{displaymath} 
 If $X$ is defined over an $O_F$-scheme $S$, the
Lie algebra $\Lie X$ is naturally an $O_F$-module. If this coincides
with the $O_F$-module structure given by $d\iota$ we call $X$ a {\it strict formal $O_F$-module}.

Let $D$ be a central division algebra of invariant $1/n$ over $F$,
with maximal order $O_D$. Drinfeld defines a   {\it  special formal
  $O_D$-module} over an $O_F$-scheme $S$ to be a strict formal $O_F$-module
of height $n^2 [F:\BQ_p]$ equipped with an action $\iota\colon
O_D\to\End(X)$ of $O_D$ that extends the action of $O_F$ and such that, in each geometric
point of $S$, the eigenspaces of $\Lie X$ under 
the action of an unramified extension $O_{\tilde{F}}$ of $O_F$ of
degree $n$ contained in $O_D$  are  all one-dimensional.

It is easy to see that if $S=\Spec \bar k$ is the spectrum of the
algebraic closure of the residue field of $O_F$,  there is a unique
special formal $O_D$-module $\BX$ over $S$, up to $O_D$-linear
isogeny.  

Let $O_{\breve F}$ be the ring of integers in the completion of the maximal unramified extension $\breve F$ of $F$. Drinfeld defines as follows a  set-valued functor $\CM$ on the category ${\rm Nilp}_{O_{\breve F}}$ of $O_{\breve F}$-schemes $S$ such that the ideal sheaf $\pi\CO_S$ is locally nilpotent: the functor associates to $S\in {\rm Nilp}_{O_{\breve F}}$ the set of isomorphism classes of triples $(X, \iota, \rho)$, where $(X, \iota)$ is a special formal $O_D$-module over $S$ and where $\rho\colon X\times_S\bar S\to \BX\times_{\Spec \bar k}\bar S$ is a $O_D$-linear quasi-isogeny of height $0$. Here $\bar S=S\times_{\Spec O_{\breve F}}\Spec \bar k$. Drinfeld's theorem is that this functor is representable by a very specific formal scheme, namely,
 \begin{equation}\label{Urthm}
 \CM\simeq \hat\Omega^n_F\times_{\Spf O_F}\Spf O_{\breve F} .
 \end{equation}
 Here $\hat\Omega^n_F$ is the formal scheme over $\Spf O_F$ defined by Deligne, Drinfeld and Mumford, cf. \cite{D}. It has as generic fiber (associated rigid-analytic space) Drinfeld's $p$-adic halfspace associated to $F$,
 $$
 (\hat\Omega^n_F)^{\rm rig}=\BP^{n-1}_F\setminus \bigcup\nolimits_{H/F} H . 
 $$
Here $H$ ranges over the hyperplanes of $\BP_F^{n-1}$ defined over $F$. 

Drinfeld's  theorem has many applications, in particular to the  $p$-adic uniformization of Shimura varieties, cf. \cite{RZ}. It also has applications to arithmetic, e.g. \cite{Ne, Ri}. 

The generic fiber of Drinfeld's formal moduli space admits a tower of finite \'etale coverings (via level structures on the Tate module of the universal $p$-divisible group). As such it is a prominent example of a  \emph{local Shimura variety}, cf. \cite{RV}. 

Let $G$ be a reductive group over the local field $F$, let $b$ be an element of $ G(\breve F)$ and let  $\{\mu\}$ be a conjugacy class of minuscule cocharacters  of $G_{\ov F}$. 
 One requires that the $\sigma$-conjugacy class $[b]$ of $b$ is \emph{neutral acceptable}, i.e., $[b]\in B(G, \{\mu\})$, cf. \cite{K}. The triple $(G, b, \{\mu\})$ is called a \emph{local Shimura datum} over $F$, cf. \cite{RV}. 
One expects to be able to attach to these data a local Shimura variety which satisfies obvious functorial properties and more. This is a tower of rigid-analytic spaces $\BM(G, b, \{\mu\})=\{\BM^K\mid K\subset G(F) \}$ indexed by the open compact subgroups of $G(F)$,  defined over the \emph{reflex field}  $E=E(G, \{\mu\})$, cf. \cite{RV}. By \cite{RZ}, local Shimura varieties exist in many cases. In the PEL case, local Shimura varieties are related to Shimura varieties by \emph{non-archimedean uniformization}, cf. \cite[Thm. 6.36]{RZ}. One also expects to have \emph{integral models} over $O_E$ of $\BM^K$,  for judicious choices of the open compact subgroup $K$, i.e., formal schemes over $\Spf O_{\breve E}$ with Weil descent datum to $\Spf O_E$ whose associated rigid-analytic space is $\BM^K$.

 In Drinfeld's case $G=D^\times$, (the linear algebraic group over $F$ associated to) the multiplicative group of $D$. The cocharacter  $\{\mu\}$ of $G_{\ov F}\simeq \GL_{n, \ov F}$ is $(1,0,\ldots,0)$ and $b$ is a representative of the unique element of $B(G, \{\mu\})$. Drinfeld's theorem says not only that the member $\BM^K$, for the level subgroup $K=O_D^\times$, is the Drinfeld $p$-adic upper half space attached to $F$, but also that the moduli problem $\CM$ defines an integral model of $\BM^K$, which is even a $\pi$-adic formal scheme with semi-stable reduction.

In the theory of local Shimura varieties, the following question arises. Assume that $b$ is a representative of the unique \emph{basic} element of $B(G, \{\mu\})$.  Let $\varepsilon$ be a central cocharacter of $G$ defined over $F$, and set $\{\mu'\}=\{\mu\varepsilon\}$. Then $E(G, \{\mu\})=E(G, \{\mu'\})$.  Let $b'$ be a representative of the unique basic element of $B(G, \{\mu'\})$. The question is whether  the local Shimura varieties  $\BM(G, b, \{\mu\})$  and $\BM(G, b', \{\mu'\})$  are Galois twists of each other (unramified twists if the connected center of $G$ splits over an unramified extension). 

 In a similar vein, start with a local Shimura datum $(G, b, \{\mu\})$ over $F$ such that $b$ is basic.
Let $G' = \Res_{F/\mathbb{Q}_p}(G)$. Then $G' \otimes_{\mathbb{Q}_p}
\bar{\mathbb{Q}}_p$ is a product of $G \otimes_{F}
\bar{\mathbb{Q}}_p$ indexed by the embeddings of $F$ in
$\bar{\mathbb{Q}}_p$. We fix an embedding by choosing an 
isomorphism $\bar{F} \cong \bar{\mathbb{Q}}_p$. Define the conjugacy class 
$\{\mu_0\}$ of $G'$ to be  $\{\mu\}$ for this fixed embedding and to be trivial
for all other embeddings. Then $E(G', \{\mu_0\}) \subset E(G,\{\mu\})$ 
with respect to the chosen isomorphism. Let $\varepsilon$ be a central
cocharacter of $G'$ defined over $F \subset
\bar{\mathbb{Q}}_p$. We set $\{\mu'\} = \{\mu_0 \varepsilon\}$. We still have $E(G', \{\mu'\}) \subset
E(G,\{\mu\})$.  Let $b'$ be a
representative of the unique basic element in $B(G', \{\mu'\})$. 
The question is whether  the local Shimura varieties  
$\mathbb{M}(G, b, \{\mu\})$ and 
$\mathbb{M}(G', b', \{\mu'\})\otimes_{E(G', \{\mu'\})} E(G, \{\mu\})$  are
Galois twists of each other (unramified twists if the connected center
of $G'$ splits over an unramified extension of $F$).

The goal of  this paper is to prove that this last question has an affirmative answer for the Drinfeld datum $(G, b, \{\mu\})$. But, even better, we construct integral models for the members corresponding to the natural \emph{maximal} level subgroups of both local Shimura varieties and show that they are isomorphic, at least when $F/\BQ_p$ is unramified\footnote{In his talk 14 July 2016 in the Bonn Arbeitsgemeinschaft Arithmetische Geometrie, P.~Scholze explained his proof of Conjecture \ref{conjDr} below, i.e., how to remove the unramifiedness hypothesis. His proof is based on  our Theorem \ref{mainDR}\label{footnoteScholze}, but uses in addition the \emph{integral $p$-adic Hodge theory} of B.~Bhatt, M.~Morrow and P.~Scholze (arXiv:1602.03148), and more. Scholze will publish his proof elsewhere. }. These integral models are constructed by posing a moduli problem of $p$-divisible groups. This is substantially different from Drinfeld's moduli problem (unless $F=\BQ_p$), which is a moduli problem of strict formal $O_F$-modules. This contrast between \emph{relative} and \emph{absolute} Rapoport-Zink spaces is important also in other contexts: in the work of A.~Mihatsch on the Arithmetic Fundamental Lemma \cite{Mi} and in joint work of us with S.~Kudla \cite{KRZ} on $p$-adic uniformization of Shimura curves.  In fact, the approach  in \cite{KRZ} is modelled on the present paper, but involves in addition a polarization.

There is another well-known local Shimura datum $(G, b, \{\mu\})$, referred to as the \emph{Lubin-Tate datum}. Here $G=\GL_n$, $\{\mu\}=(1, 0,\ldots, 0)$, and $[b]\in B(G, \{\mu\})$ is the unique basic element. In this case, the corresponding local Shimura variety again has an explicit integral model  for its member $\BM^K$, where $K=\GL_n(O_F)$. For this local Shimura variety, we also give a positive answer to the question raised above, again in the strong form pertaining to integral models. This theorem is applied in the work of B.~Smithling, W.~Zhang and the second author on the Arithmetic Gan-Gross-Prasad conjecture \cite{RSZ3}.  

It should be pointed out that the two cases of integral models of $\BM^K$ considered here are essentially the only ones   \emph{known explicitly}, which justifies singling out these special cases of a general problem.

The lay-out of the paper is as follows. In section \ref{s:formu} we
formulate our  moduli functor and  state our main
results. In section \ref{s:kotteis}, we discuss the conditions on the
Lie algebras in the formulation of the moduli problem. In section
\ref{s:formalmod} we establish an isomorphism between our moduli
functor when restricted to $\bar k$-schemes with the original Drinfeld
moduli functor. The main tool here is the theory of displays. In
section \ref{s:localmod} we determine the local structure of our
moduli scheme.  Here the main tool is the theory of local models of Rapoport-Zink spaces. In section \ref{s:genfiber} we prove the compatibility
theorem with the Drinfeld moduli functor   in the generic fiber. This
proof is due to P.~Scholze, and uses his theory of $p$-divisible
groups over $O_C$, cf. \cite{SW}.  In section \ref{s:unramified} we
prove our {\it integral} representability conjecture in the case where
$F/\BQ_p$ is unramified. The proof uses the theory of relative
displays of T.~Ahsendorf. In the final section \ref{s:LT} we give the
Lubin-Tate variant of our main theorem.  

\smallskip

{\bf Acknowledgements} We thank P.~Scholze for explaining to us his proof of point (ii) in Theorem \ref{mainDR}, and for a critical reading of a first version of this paper. We also thank U.~G\"ortz for helpful discussions.

\section{Formulation of the main results}\label{s:formu}

Let $F$ be a field extension of degree $d$ of $\mathbb{Q}_p$. 
We denote by $\SuetO_F$ the ring of integers and by $\kappa$
the residue  field. We write $f = [\kappa: \mathbb{F}_p]$ for the
inertia index and $d = ef$. 

Fix $n\geq 2$. 
Let
$\Phi = \Hom_{\mathbb{Q}_p}(F, \bar{\mathbb{Q}}_p)$ be the set of
field embeddings. 
We fix an embedding $\varphi_0 : F \rightarrow   \bar{\mathbb{Q}}_p$. 
Let $r: \Phi \rightarrow
\mathbb{Z}$ be a function such that
\begin{equation}\label{propr}
r_{\varphi} = 
\begin{cases}
\begin{array}{ll}
1, &  \text{if} \; \varphi = \varphi_0\\
0 \; \text{or} \; n, & \text{if} \; \varphi \neq \varphi_0.
\end{array}
\end{cases}
\end{equation} 
The reflex field $E \subset \bar{\mathbb{Q}}_p$ of $r$ is characterized by 
\begin{displaymath}
\Gal(\bar{\mathbb{Q}}_p/E) = \{\sigma \in
\Gal(\bar{\mathbb{Q}}_p/\mathbb{Q}_p)  \; | \; r_{\sigma \varphi} =
r_{\varphi} \; \text{for all} \; \varphi \}. 
\end{displaymath}
We have $\varphi_{0}(F) \subset E$, and we consider $E$ as a field extension of $F$ via $\varphi_0$.

Let $D$ be a central division algebra of invariant $1/n$
over $F$. We  will   consider $p$-divisible groups $X$ of height $n^2 d$ over $O_E$-schemes $S$, with an action
of the ring of integers $O_D$ in $D$,
\begin{equation*}
 \iota : O_D \lra \End (X).
\end{equation*}
We will need to impose conditions on the induced action of $O_D$ on $\Lie X$. The first condition is the {\it Kottwitz condition} 
\begin{equation}\label{charpolDr}
 {\rm char} (\iota (x) \vert \Lie X) = \prod\nolimits_{\varphi} \, \,\varphi\big( {\rm chard} \, \, (x) (T)\big)^{r_\varphi} , 
\quad \forall x \in O_D\, .
\end{equation}
Here ${\rm chard} (x)$ denotes the {\it reduced characteristic polynomial} of $x$, a polynomial of degree $n$ with coefficients in $O_F$. The RHS is a polynomial in $O_E[T]$. It becomes a polynomial with coefficients in $\CO_S$ via  the structure morphism.

As we will show (cf. Proposition \ref{unram}), the Kottwitz  condition is all we need to yield a good moduli problem when $F/\BQ_p$ is unramified. When $F/\BQ_p$ is ramified,  the Kottwitz condition is too weak. To state the additional condition we impose, we need some preparation. 

Let $F^t$ be the maximal unramified subfield of $F$. We will write
$\Psi = \Hom_{\mathbb{Q}_p}(F^{t},\bar{\mathbb{Q}}_{p})$ for 
the set of field embeddings. Let $\psi_0={\varphi_0}_{|F^{t}}$. For an
embedding $\psi: F^{t} 
\rightarrow \bar{\mathbb{Q}}_p$ we set
\begin{equation}\label{abpsi1e}
\begin{aligned}
   A_\psi &= \{\varphi: F \rightarrow \bar{\mathbb{Q}}_p \;|\;
   \varphi_{|F^{t}} = \psi, \; \text{and} \; r_{\varphi} = n  \}\\
B_{\psi} &= \{\varphi: F \rightarrow \bar{\mathbb{Q}}_p \;|\;
   \varphi_{|F^{t}} = \psi, \; \text{and} \; r_{\varphi} = 0  \}.
\end{aligned}
\end{equation} 
Also, let $a_\psi=|A_\psi|$ and $b_\psi=|B_\psi|$.  For any
$O_E$-scheme $S$ we have a decomposition of
$O_{F^t}\otimes_{\BZ_p}\CO_S$-modules 
\begin{equation}\label{unrdecomp}
O_{F^t}\otimes_{\BZ_p}\CO_S=\bigoplus\nolimits_{\psi\in\Psi}\CO_S\, ,
\end{equation}
where the action of $\CO_{F^t}$ on the $\psi$-th factor is via
$\psi$. Hence for $(X, \iota)$ over $S$, we obtain a decomposition
into locally free $\CO_S$-modules,  
\begin{equation}\label{eigensp}
\Lie X=\bigoplus\nolimits_{\psi\in\Psi} \Lie\!_\psi X\,.
\end{equation}
The rank of $\Lie\!_\psi X$ is given by \eqref{charpolDr} as 
\begin{equation}
\rank\, \Lie\!_\psi X=a_\psi n^2\,  +\epsilon_{ \psi}\, n \,,
\end{equation}
where $\epsilon_{ \psi}$ is equal to $1$ if $\psi=\psi_0$, and is equal to $0$ if $\psi\neq \psi_0$.

Let $\pi$ be a uniformizer in $O_F$. Consider the  Eisenstein
polynomial $Q(T)$ of $\pi$ in $O_{F^t}[T]$. We consider the image
$Q_\psi(T)$ of $Q(T)$ in $\bar\BQ_p[T]$ under $\psi$, for $\psi\in\Psi$. In
$\bar\BQ_p[T]$ this has a decomposition into linear factors,  
\begin{equation}\label{decoveralgcl}
Q_\psi(T)=\prod_{\{\varphi\mid\,\, \varphi _{|F^t}=\psi\}} (T-\varphi(\pi)) . 
\end{equation}
Note that $\Gal(\bar\BQ_p/E)$ acts on the index set of this product, as is clear since the LHS is a polynomial in $O_E[T]$. 
We therefore obtain a decomposition in $O_E[T]$ 
\begin{equation}\label{Qpsi1e}
\begin{aligned}
Q_{\psi_0}(T)&=Q_0(T)\cdot Q_{A_{\psi_0}}(T)\cdot Q_{B_{\psi_0}}(T) ,  &\text{resp.}\\
Q_\psi(T)&= Q_{A_\psi}(T)\cdot Q_{B_\psi}(T) ,  &\text{ for $\psi\neq\psi_0$} .
\end{aligned}
\end{equation}
Here
$$Q_0(T)=T-\varphi_0(\pi), \quad Q_{A_\psi}(T)=\prod_{\varphi\in
  A_\psi}(T-\varphi(\pi)), \quad Q_{B_\psi}(T)=\prod_{\varphi\in
  B_\psi}(T-\varphi(\pi)). 
$$ 
Indeed, the action of $\Gal(\bar\BQ_p/E)$ stabilizes the
corresponding subsets in the index set on the RHS of
\eqref{decoveralgcl}. Now using the structure morphism $O_E\to \CO_S$,
we obtain an endomorphism $Q_{A_\psi}(\iota(\pi))$  of the
$\CO_S$-module $\Lie_\psi X$ that we denote by $ Q_{A_\psi}(\iota(\pi)
\vert\Lie_\psi X)$. We similarly define $ Q_{B_\psi}(\iota(\pi)
\vert\Lie_\psi X)$ and $Q_0(\iota(\pi)\vert \Lie_{\psi_0}
X)=\iota(\pi)\vert \Lie_{\psi_0} X-\varphi_0(\pi){\rm
  Id}_{\Lie_{\psi_0} X}$.  

The additional conditions we impose, that we call the {\it Eisenstein conditions},   are now the following identities of endomorphisms 
\begin{equation}\label{furtherDr2}
\begin{aligned}
   \big( Q_0\cdot  Q_{A_{\psi_0}}\big)(\iota(\pi) \vert\Lie_{\psi_0} X) & = 0, \\
    \bigwedge^{n+1} \big( Q_{A_{\psi_0}}(\iota(\pi) \vert\Lie_{\psi_0}
    X) \big)& =  0, \\ 
Q_{A_\psi}(\iota(\pi) \vert\Lie\!_\psi X) &   = 0 , \, \forall \psi\neq \psi_0 .
\end{aligned}
\end{equation}
 
\begin{remark}
  We note that the Eisenstein conditions only depend on the
  restriction of the $O_D$-action to $O_F$.  It will follow a
  posteriori from Corollary \ref{indepofpi} (flatness) that the moduli
  problem formulated using the Eisenstein conditions is independent of
  the choice of the uniformizer $\pi$.
\end{remark}
We first note the following statement.
\begin{proposition} \label{unram} If $F/\BQ_p$ is unramified, the
  Eisenstein conditions are implied by the Kottwitz condition.
\end{proposition}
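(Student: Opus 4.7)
The plan is to exploit the fact that when $F/\mathbb{Q}_p$ is unramified we have $F = F^t$, so that the decomposition $\Lie X = \bigoplus_{\psi \in \Psi} \Lie_\psi X$ of \eqref{eigensp}, which a priori records the $O_{F^t}$-action, already records the full $O_F$-action. On each summand $\Lie_\psi X$ the element $\pi \in O_F$ then acts by the scalar $\psi(\pi)$. Moreover, since $F = F^t$, the Eisenstein polynomial of $\pi$ is linear, $Q(T) = T - \pi$, so that $Q_\psi(T) = T - \psi(\pi)$ is itself linear and the decompositions \eqref{Qpsi1e} are trivial: exactly one of the factors $Q_0, Q_{A_\psi}, Q_{B_\psi}$ occurring at a given $\psi$ equals $T - \psi(\pi)$, and the remaining factors are empty products equal to $1$.

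With these two observations the three Eisenstein conditions \eqref{furtherDr2} will reduce to the following routine checks. First, $Q_0(\iota(\pi) | \Lie_{\psi_0} X) = \iota(\pi) - \varphi_0(\pi)$ vanishes on $\Lie_{\psi_0}X$ because $\iota(\pi)$ acts there by $\psi_0(\pi) = \varphi_0(\pi)$; this immediately yields the first identity. Second, for $\psi \neq \psi_0$, if $r_\psi = n$ then $A_\psi = \{\psi\}$ and $Q_{A_\psi}(\iota(\pi)|\Lie_\psi X) = \iota(\pi) - \psi(\pi) = 0$, while if $r_\psi = 0$ then $A_\psi = \emptyset$, so $Q_{A_\psi}(\iota(\pi)|\Lie_\psi X) = \id_{\Lie_\psi X}$, which vanishes precisely when $\Lie_\psi X = 0$. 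Third, the condition $\bigwedge^{n+1}\id_{\Lie_{\psi_0} X} = 0$ is equivalent to $\rank \Lie_{\psi_0} X \leq n$.

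It therefore remains to extract from the Kottwitz condition \eqref{charpolDr} the rank equalities $\rank \Lie_\psi X = n r_\psi$ for $\psi \neq \psi_0$ and $\rank \Lie_{\psi_0} X = n$. I would obtain these by applying \eqref{charpolDr} to an element $\alpha \in O_F$ which generates $F/\mathbb{Q}_p$, so that the scalars $\{\psi(\alpha)\}_{\psi \in \Psi}$ are pairwise distinct in $\bar{\mathbb{Q}}_p$; since $\iota(\alpha)$ acts on $\Lie_\psi X$ as $\psi(\alpha)$, comparing the characteristic polynomial $\prod_\psi (T - \psi(\alpha))^{\rank \Lie_\psi X}$ with the right-hand side $\prod_\varphi (T - \varphi(\alpha))^{n r_\varphi}$ of \eqref{charpolDr} and matching exponents yields the claimed ranks. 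No step here is truly an obstacle; the only substantive point is the initial observation that $O_{F^t} = O_F$ forces $\iota(\pi)$ to act diagonalisably by the scalars $\psi(\pi)$, which trivialises each Eisenstein identity modulo the rank information supplied by Kottwitz.
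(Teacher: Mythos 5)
Your argument is correct and follows essentially the same route as the paper: since $F=F^t$, the element $\iota(\pi)$ acts on $\Lie_\psi X$ by the scalar $\psi(\pi)$, each $Q_{A_\psi}$ is either $1$ or $T-\psi(\pi)$ (with $A_{\psi_0}=\emptyset$), and the three Eisenstein identities collapse to the rank statements supplied by the Kottwitz condition. The one step to tighten is your choice of $\alpha$: a generator of $F/\BQ_p$ need not have pairwise distinct images $\psi(\alpha)$ after reduction modulo $p$ (take $\alpha=1+p\zeta$ with $\zeta$ a root of unity generating $F$), and matching exponents in $\CO_S[T]$ over a base of residue characteristic $p$ requires the differences $\psi(\alpha)-\psi'(\alpha)$ to be units of $O_E$; so choose $\alpha$ with $O_F=\BZ_p[\alpha]$, i.e., whose reduction generates $\kappa/\BF_p$, which is possible exactly because $F/\BQ_p$ is unramified. (Alternatively, the rank formula $\rank\Lie_\psi X=a_\psi n^2+\epsilon_\psi n$ is already recorded in the paper immediately after \eqref{eigensp} and could simply be invoked.)
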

\begin{proof}
  When $F=F^t$ is unramified over $\BQ_p$, the uniformizer $\pi$ lies
  in $F^t$ and $Q(T)=T-\pi$ is a linear polynomial. Furthermore,
  $A_\psi$ has at most one element for $\psi\neq \psi_0$, and
  $A_{\psi_0}=\emptyset$. Let $\psi\neq \psi_0$. If
  $A_\psi=\emptyset$, then $\Lie_\psi X=(0)$ and the Eisenstein
  condition relative to the index $\psi$ is empty; if $A_\psi$ has one
  element, the Eisenstein condition relative to the index $\psi$ is
  just equivalent to the definition of the $\psi$-th eigenspace in the
  decomposition \eqref{eigensp}. Something analogous applies to the
  index $\psi_0$.
\end{proof}
The following statement shows that the moduli problems considered in
this paper are indeed generalizations of Drinfeld's moduli problem.
We call the {\it Drinfeld function} the function $r^\circ$ with
$r^\circ_\varphi=0, \forall \varphi\neq\varphi_0$. In this case
$E_{r^\circ}=F$.
\begin{proposition}\label{comprDr}
  Assume that $r=r^\circ$. Then a p-divisible group $(X, \iota)$ as
  above, i.e., satisfying the Kottwitz condition \eqref{charpolDr} and
  the Eisenstein conditions \eqref{furtherDr2}, is a special formal
  $O_D$-module in the sense of Drinfeld \cite{D}.
\end{proposition}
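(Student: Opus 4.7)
The plan is to verify each ingredient in Drinfeld's definition of a special formal $O_D$-module: height $n^2d$, extension of the $O_F$-action to $O_D$, strictness of the $O_F$-module structure on $\Lie X$, and the one-dimensionality of $O_{\tilde F}$-eigenspaces at geometric points. The first two are built into the data $(X,\iota)$, so the content is (a) strictness and (b) the geometric eigenspace condition.

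First I would exploit $r=r^\circ$ to collapse the conditions. Since $r_{\varphi_0}=1$ and $r_\varphi=0$ otherwise, no embedding has $r_\varphi=n$, so $A_\psi=\emptyset$ for every $\psi\in\Psi$ and $Q_{A_\psi}(T)=1$ throughout. The rank formula gives $\Lie_\psi X=0$ for $\psi\neq\psi_0$ and $\rank\Lie_{\psi_0}X=n$. The only nontrivial Eisenstein identity that survives is the first one, which becomes $Q_0(\iota(\pi)\mid\Lie_{\psi_0}X)=0$, that is,
\[
\iota(\pi)=\varphi_0(\pi)\cdot\mathrm{id}\quad\text{on}\quad \Lie X=\Lie_{\psi_0}X.
\]

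Next I would verify strictness. From \eqref{eigensp}, $O_{F^t}$ acts on $\Lie_\psi X$ via $\psi$, hence on all of $\Lie X=\Lie_{\psi_0}X$ via $\psi_0=\varphi_0|_{O_{F^t}}$. Combined with the scalar action of $\pi$ just established, and using $O_F=O_{F^t}[\pi]$ as a ring, this forces $\iota(x)=\varphi_0(x)\cdot\mathrm{id}$ on $\Lie X$ for every $x\in O_F$. Since $E=F$ here and the $O_F$-algebra structure on $\CO_S$ is obtained via $\varphi_0$, this is exactly the strictness condition.

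Finally I would verify Drinfeld's eigenspace condition at a geometric point $\bar s$ of $S$ over $\bar k$. Let $\tilde F\subset D$ be an unramified extension of $F$ of degree $n$ with $O_{\tilde F}\subset O_D$. For $x\in O_{\tilde F}$ the reduced characteristic polynomial factors as $\mathrm{chard}(x)(T)=\prod_{\sigma\in\Gal(\tilde F/F)}(T-\sigma(x))$, so the Kottwitz condition \eqref{charpolDr} gives
\[
\mathrm{char}(\iota(x)\mid\Lie X)(T)=\varphi_0(\mathrm{chard}(x))(T)=\prod_{\tilde\varphi}(T-\tilde\varphi(x)),
\]
the product ranging over the $n$ embeddings $\tilde\varphi\colon\tilde F\to\bar{\mathbb{Q}}_p$ extending $\varphi_0$. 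Choosing $x\in O_{\tilde F^t}\subset O_{\tilde F}$ to be the Teichm\"uller lift of a generator of $\tilde\kappa/\mathbb{F}_p$, the reductions of the $\tilde\varphi(x)$ in the residue field at $\bar s$ are the distinct Galois conjugates of the generator in $\bar k$, hence pairwise distinct. Therefore $\iota(x)$ has $n$ distinct eigenvalues on the $n$-dimensional space $\Lie X$, and since $\iota(\pi)$ acts by a scalar, every joint $O_{\tilde F}$-eigenspace is one-dimensional. There is no substantive obstacle; the only point to handle with care is the passage to the geometric point, where distinctness of the reductions $\tilde\varphi(x)\bmod p$ reflects the fact that distinct extensions of $\varphi_0$ to the unramified $\tilde F$ induce distinct embeddings of $\tilde\kappa$ into $\bar k$.
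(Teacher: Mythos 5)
Your proof is correct and follows essentially the same route as the paper: observe that $A_\psi=\emptyset$ collapses the Eisenstein conditions to $Q_0(\iota(\pi)\mid\Lie_{\psi_0}X)=0$, deduce that $O_F$ acts on $\Lie X$ through $\varphi_0$ (strictness), and then invoke the Kottwitz condition for the eigenspace condition. The only difference is that the paper simply asserts that the Kottwitz condition makes the $O_D$-action special, whereas you spell out the eigenvalue argument at geometric points; that elaboration is accurate and fills in a detail the paper leaves to the reader.
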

\begin{proof} In this case, $A_\psi=\emptyset, \forall \psi$. Hence
  $\Lie_\psi X=(0)$ for $\psi\neq \psi_0$, and $\Lie_{\psi_0} X$ is a
  locally free $\CO_S$-module of rank $n$. Also, the endomorphism
  $Q_{A_{\psi_0}}(\iota(\pi) \vert\Lie_{\psi_0} X)$ is the identity
  automorphism. Hence the first Eisenstein condition implies that
  $Q_0(\iota(\pi) \vert\Lie_{\psi_0} X)=0$. Since
  $Q_0(T)=T-\varphi_0(\pi)$, it follows that $\iota(\pi)$ acts on
  $\Lie_{\psi_0} X$ through the structure morphism
  $O_E\to\CO_S$. The same is true for all elements of
  $O_{F^t}$. Hence $X$ is a strict formal $O_F$-module. Now the Kottwitz condition \eqref{charpolDr} tells us that
  the action of $O_D$ on $X$ is {\it special}, which proves
  the claim.
\end{proof}
\begin{definition}
  Fix a function $r:\Phi \to \BZ_{\geq 0}$, with corresponding reflex
  field $E=E_r$. A $p$-divisible group $X$ with action $\iota$ by
  $O_D$ over a $O_E$-scheme $S$ is called an
  \emph{$r$-special $O_D$-module}, if $X$ is of height $n^2d$ and $(X,
  \iota)$ satisfies the Kottwitz condition and the Eisenstein
  conditions relative to $r$.
\end{definition}
Hence the previous proposition shows that a $r^\circ$-special formal
$O_D$-module is just a special formal $O_D$-module
in the sense of Drinfeld \cite{D}.

For the formulation of the moduli problem we will make use of the following lemma. The lemma follows from section \ref{s:formalmod}, more precisely, Corollary \ref{uniupto}. Alternatively, the lemma   follows from the fact that $B(G, \{\mu\})$ has only one element, cf. \cite{K}, \S 6. Here $G=\Res_{F/\BQ_p}(D^\times)$ is the linear algebraic group over $\BQ_p$ associated to $D^\times$, and $\{\mu\}$ is the conjugacy class of cocharacters with component $(1, 0^{(n-1)})$ for $\varphi_0$ and component $(1^{(n)})$, resp. $(0^{(n)})$ for $\varphi\neq\varphi_0$, depending on whether $r_{\varphi}=n$ or $r_\varphi=0$. 
\begin{lemma}\label{uniqueframe} 
Fix $r$. Let $\bar k$ be an algebraic closure  of the residue field $\kappa_E$ of $O_E$. Any two  $r$-special $p$-divisible groups  over $\bar k$ are isogenous by a $O_D$-linear isogeny (which may be taken to be of height $0$). \qed
\end{lemma}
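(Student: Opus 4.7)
The plan is to appeal to Kottwitz's classification of isocrystals with $G$-structure for $G=\Res_{F/\BQ_p}(D^\times)$, as indicated in the statement of the lemma. First, to any $r$-special $p$-divisible group $(X,\iota)$ over $\bar k$ I would associate its rational covariant Dieudonn\'e module $(N,\Phi)$, which is an $F$-isocrystal of $\BZ_p$-dimension $n^2 d$ equipped with an action of $D$ extending that of $F$. Dieudonn\'e theory identifies the $O_D$-linear isogeny class of $(X,\iota)$ with the isomorphism class of $(N,\Phi,\iota)$ as a $D$-isocrystal, which is by definition an element of Kottwitz's set $B(G)$.

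Next, I would translate the Kottwitz condition \eqref{charpolDr} together with the Eisenstein conditions \eqref{furtherDr2} into a statement pinning down the Hodge cocharacter of the crystalline filtration on $N$. The Kottwitz condition specifies the character of $O_D$ acting on $\Lie X$, while the Eisenstein conditions refine this by controlling, for each $\psi\in\Psi$, the action of $\iota(\pi)$ on the isotypic summand $\Lie_\psi X$. Together they force the Hodge cocharacter to lie in the conjugacy class $\{\mu\}$ of the statement: component $(1,0^{(n-1)})$ at $\varphi_0$, and component $(1^{(n)})$ or $(0^{(n)})$ at $\varphi\neq\varphi_0$ according to whether $r_\varphi=n$ or $0$. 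Consequently the class of $(N,\Phi,\iota)$ lies in the Kottwitz subset $B(G,\{\mu\})\subset B(G)$.

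The crucial step is the verification $|B(G,\{\mu\})|=1$. Since $D$ is a division algebra, $G$ is anisotropic modulo its center, and Kottwitz's description of $B(G,\{\mu\})$ reduces to finding pairs consisting of a Newton point and an image in $\pi_1(G)_\Gamma$ that are compatible with $\{\mu\}$. Using Shapiro's lemma to pass from $G=\Res_{F/\BQ_p}(D^\times)$ to $D^\times/F$, one checks directly that $\kappa_G(\mu)$ together with the average Newton point $\bar\nu(\mu)$ admit a unique common lift, namely the basic class; on Dieudonn\'e modules this corresponds to the standard slope-$1/n$ isocrystal attached to $D$. This produces a $D$-linear quasi-isogeny $\rho:X\to Y$ between any two $r$-special formal $O_D$-modules over $\bar k$.

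Finally, to arrange the height to be zero, I would compose $\rho$ with a suitable $\iota_Y(d)$ for $d\in D^\times$: the heights of $O_D$-linear quasi-isogenies $Y\to Y$ obtained this way form an infinite cyclic subgroup of $\BZ$, and the height of $\rho$ itself lies in the same subgroup by $O_D$-linearity, so a suitable $d$ (for instance a power of a uniformizer of $O_D$, adjusted using surjectivity of $\Nrd\colon D^\times\to F^\times$) normalizes to height zero. I expect the principal obstacle to be the verification $|B(G,\{\mu\})|=1$: while standard in principle, the Shapiro identifications and the matching of the Newton and Kottwitz invariants for a restriction of scalars require careful bookkeeping, and one must crucially use that $\mu$ is minuscule in each coordinate to conclude uniqueness.
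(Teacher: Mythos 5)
Your route---reducing everything to the statement that $B(G,\{\mu\})$ is a singleton for $G=\Res_{F/\BQ_p}(D^\times)$---is exactly the \emph{alternative} argument that the paper mentions in one sentence (with a reference to Kottwitz) but does not carry out. The proof the paper actually relies on is different: Theorem \ref{equivofcat} gives a display-theoretic equivalence $\CP\mapsto\CP'$ between $r$-special and special (i.e.\ $r^\circ$-special) formal $O_D$-modules over $\bar k$, and uniqueness up to $O_D$-linear isogeny is then quoted from Drinfeld's original work (Corollary \ref{uniupto}). Your approach is viable and in one respect cleaner than you make it sound: since $G$ is anisotropic modulo its center, every class in $B(G)$ is basic and $\kappa_G\colon B(G)\to\pi_1(G)_\Gamma\cong\BZ$ is injective, and $\kappa_G(b)$ is already pinned down by $\rank\,\Lie X$, i.e.\ by the Kottwitz condition alone. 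So for the uniqueness of the isogeny class you need neither the Eisenstein conditions nor the Mazur inequality $\nu_b\leq\bar\mu$, only the matching of endpoints; the ``careful bookkeeping'' you anticipate largely evaporates. What the paper's route buys instead is the explicit identification of the special fiber of $\CM_r$ with Drinfeld's, which is needed later in any case.

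The one genuine defect is in your normalization of the height. For $d\in D^\times$ not central, $\iota_Y(d)$ does \emph{not} commute with $\iota_Y(O_D)$, so $\iota_Y(d)\circ\rho$ is no longer $O_D$-linear; in particular powers of a uniformizer $\Pi$ of $O_D$ are not available to you, and the surjectivity of $\Nrd\colon D^\times\to F^\times$ is beside the point. The central elements $d\in F^\times$ that you are allowed to use produce heights exhausting only $n^2f\,\BZ$, whereas the height of a general $O_D$-linear quasi-isogeny between $r$-special modules lies in $nf\,\BZ$ (both Dieudonn\'e lattices are free of rank one over $O_D\otimes_{\BZ_p}W(\bar k)$, and Frobenius-equivariance forces the colength of $\rho(M_X)_{\tilde\psi}$ in $(M_Y)_{\tilde\psi}$ to be independent of $\tilde\psi\in\tilde\Psi$, a set of cardinality $nf$); so your recipe cannot reach height $0$ in general. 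The repair is to normalize using the group $J(\BQ_p)$ of $O_D$-linear self-quasi-isogenies of $\BX$, which is $\GL_n(F)$ (as in the Drinfeld case, to which one reduces since twisting the Frobenius by the central element $\pi^m$ does not change the commutant); its height character has image exactly $nf\,\BZ$, which matches the computation above.
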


Now fix such a pair $(\BX, \iota_{\BX})$ over $\bar k$.
Denote by $O_{\breve E}$ the ring of integers in the completion of the maximal unramified extension of $E$. Then $\bar k$ is the residue field of $O_{\breve E}$. We consider the following set-valued functor $\CM_r$ on ${\rm Nilp}_{O_{\breve E}}$. It associates to $S\in {\rm Nilp}_{O_{\breve E}}$ the set of isomorphism
classes of triples $(X, \iota, \varrho)$, where $(X, \iota)$ is an $r$-special $O_D$-module over $S$, and where 
\begin{equation}
 \varrho : X \times_S \bar{S} \lra \BX \times_{\Spec \bar k} \bar{S}
\end{equation}
is a $O_D$-linear quasi-isogeny of height  zero. Here $\bar{S} = S \otimes_{O_{\breve E}} \bar k$. Our main conjecture can now be stated as follows. 
\begin{conjecture}\footnote{See the footnote \ref{footnoteScholze}}\label{conjDr}
 The  functor $ \CM_r$ is represented by $\hat{\Omega}^n_F \hat{\otimes}_{O_F} O_{\breve E}$.
\end{conjecture}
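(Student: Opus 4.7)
The plan is to construct an isomorphism of formal schemes between $\CM_r$ and $\CM_{r^\circ} \hat\otimes_{O_{\breve F}} O_{\breve E}$; by Drinfeld's theorem \eqref{Urthm} the latter is exactly $\hat\Omega^n_F \hat\otimes_{O_F} O_{\breve E}$. Concretely, one constructs a functorial correspondence between $r$-special and $r^\circ$-special $O_D$-modules, compatible with the framings provided by Lemma \ref{uniqueframe}. This reduces the conjecture to verifying that the correspondence is an isomorphism over every $S \in {\rm Nilp}_{O_{\breve E}}$.

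The first step is to verify the correspondence on $\bar k$-schemes. Here the natural tool is Zink's theory of displays: both classes of objects over a perfect $\bar k$-algebra $R$ are classified by $O_D$-windows over $W(R)$, and one checks that any $r$-special display admits a canonical $O_D$-linear quasi-isogeny of height zero to a Drinfeld-special display. On the special fiber, the Eisenstein conditions \eqref{furtherDr2} become nilpotency conditions on $\iota(\pi)$ (since $\varphi(\pi)$ reduces to $0$ in $\bar k$ for every $\varphi$), which together with the Kottwitz condition control the Hodge filtration well enough to exhibit the desired quasi-isogeny. This is the content of Section \ref{s:formalmod}.

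Next, for the generic fiber, I would invoke P.~Scholze's theory of $p$-divisible groups over $O_C$ from \cite{SW}, as used in Section \ref{s:genfiber}. For $C$ an algebraically closed complete extension of $\breve E$, a $p$-divisible group with $O_D$-action over $O_C$ is determined by the Hodge--Tate filtration on its rational Dieudonn\'e module, and the Kottwitz plus Eisenstein conditions cut out in the appropriate flag variety exactly the Drinfeld period domain. This yields the generic-fiber isomorphism $(\CM_r)^{\rm rig} \cong (\hat\Omega^n_F)^{\rm rig} \otimes_F \breve E$.

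To package the two into an \emph{integral} isomorphism one needs the local structure result of Section \ref{s:localmod}. One introduces a projective local model $M^{\rm loc}$ over $O_E$ whose \'etale-local structure agrees with that of $\CM_r$. The main obstacle is showing that $M^{\rm loc}$ is flat over $O_E$: the naive local model defined by the Kottwitz condition alone is not flat in the ramified case, and the Eisenstein conditions \eqref{furtherDr2} are calibrated precisely to cut out its flat closure. Verifying this requires an explicit analysis of the defining relations built from the factorization \eqref{Qpsi1e} of the $Q_\psi(T)$, in the spirit of Pappas--Rapoport's work on local models, and this is where I expect the bulk of the technical work to lie. Granted flatness, the comparison morphism $\CM_r \to \CM_{r^\circ} \hat\otimes O_{\breve E}$ is an isomorphism on the reduced special fiber (Section \ref{s:formalmod}) and on the rigid generic fiber (Section \ref{s:genfiber}), hence an isomorphism on completed strict henselizations by a Nakayama-type argument, hence a global isomorphism of formal schemes, proving Conjecture \ref{conjDr}.
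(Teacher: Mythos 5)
The statement you are trying to prove is stated in the paper as a \emph{conjecture}, and the paper proves it only when $F/\BQ_p$ is unramified (Theorem \ref{mainUR}); in the ramified case the authors explicitly obtain only the weaker package of Theorem \ref{mainDR} (flatness, normality, and isomorphisms of the special and generic fibers \emph{separately}) and remark after Corollary \ref{locmodflat} that they are unable to prove the stronger consequences of the conjecture. Your proposal reproduces the three genuine ingredients of the paper (the display-theoretic equivalence over $\bar k$ in Section \ref{s:formalmod}, Scholze's generic-fiber comparison in Section \ref{s:genfiber}, and the flat local model in Section \ref{s:localmod}) but then asserts a gluing step that does not exist.

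The gap is in your final paragraph. There is no comparison morphism $\CM_r \to \CM_{r^\circ}\hat\otimes_{O_{\breve F}} O_{\breve E}$ of formal schemes over $\Spf O_{\breve E}$ in the ramified case, so there is nothing to which a ``Nakayama-type argument'' could be applied. The construction $\mathcal{P}\mapsto\mathcal{P}'$ of Theorem \ref{equivofcat} replaces $F$ and $\dot F$ by $\pi^{a_\psi}F$ and $\pi^{a_\psi}\dot F$ and inverts this by dividing by $\pi^{a_\psi}$; this only makes sense over $\kappa_E$-algebras (where one exploits that $\pi$ acts injectively on $W(R)$ for $R$ reduced and then deforms along divided power thickenings \emph{inside the special fiber}), and it does not lift to $O_{\breve E}$. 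In the unramified case the authors bridge this by passing to the $d$-display $(P_0,Q_0,F_{\rm rel},\dot F_{\rm rel})$ and invoking Ahsendorf's equivalence between relative displays and formal $O_F$-modules, which produces an honest morphism \eqref{functormorph} over $\Spf O_{\breve F}$; no analogue of this is available when $F/\BQ_p$ is ramified, and the generic-fiber identification of Section \ref{s:genfiber} is obtained by a completely different mechanism (vector bundles on the Fargues--Fontaine curve, after inverting $p$) that is not known to come from an integral morphism. Finally, even granting both fiberwise isomorphisms, two flat $\pi$-adic formal schemes with isomorphic special fibers and isomorphic rigid generic fibers need not be isomorphic — one needs a single morphism inducing both — so the last inference is not valid as stated. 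What you have written is an accurate road map of what is known, but the conjecture remains open in the ramified case.
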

Our main results towards this conjecture are the following. 
\begin{theorem}\label{mainUR}
The conjecture is true if $F/\BQ_p$ is unramified. 
\end{theorem}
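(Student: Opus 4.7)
The plan is to reduce Conjecture \ref{conjDr} in the unramified case to Drinfeld's theorem \eqref{Urthm} by constructing a canonical isomorphism of functors
$$
\CM_r \;\cong\; \CM_{r^\circ} \,\hat{\otimes}_{O_{\breve F}}\, O_{\breve E}.
$$
By Proposition \ref{unram}, when $F/\BQ_p$ is unramified the Eisenstein conditions are automatic, so only the Kottwitz condition \eqref{charpolDr} plays a role. Lemma \ref{uniqueframe} together with Proposition \ref{comprDr} yields an $O_D$-linear isogeny $\alpha \colon \BX \to \BX^\circ$ of height zero between the $r$-special framing object $\BX$ and a Drinfeld-special framing object $\BX^\circ$; I fix such an $\alpha$ once and for all.

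The heart of the argument is the deformation of $\alpha$ to a functorial isogeny over arbitrary $S \in {\rm Nilp}_{O_{\breve E}}$. For this I would invoke Ahsendorf's theory of relative displays: a strict formal $O_F$-module over $S$ is equivalent to a relative $O_F$-display, and an $r$-special $O_D$-module $(X, \iota)$ then corresponds to such a display $\CP$ carrying an $O_D$-action. The action of the unramified extension $O_{\tilde F} \subset O_D$ of degree $n$ over $O_F$ decomposes $\CP$ into $n$ components indexed by $\Hom_F(\tilde F, \bar\BQ_p)$, and the Kottwitz condition translates into rank constraints on the Hodge filtration in each component. Since for every embedding $\varphi \neq \varphi_0$ one has $r_\varphi \in \{0, n\}$, the Hodge filtration in the corresponding component is either the entire space or zero; this rigidity allows one to modify $\CP$ component-by-component via the $O_{\tilde F}$-decomposition to produce a Drinfeld-special display $\CP^\circ$, together with a canonical $O_D$-linear isogeny $\CP \to \CP^\circ$ that specializes to $\alpha$ over $\bar k$. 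Applying the same procedure with $\alpha^{-1}$ yields a quasi-inverse, establishing the isomorphism of functors. Drinfeld's theorem \eqref{Urthm} and base change then give $\CM_r \cong \hat{\Omega}^n_F \hat{\otimes}_{O_F} O_{\breve E}$.

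The main obstacle is carrying out the component-by-component modification of the relative $O_F$-display in a canonical and functorial manner over arbitrary $S$. One must show that once $\alpha$ is chosen, the modifications at the rigid components are uniquely determined by compatibility with the Frobenius of the display and with the reduction to $\bar k$. This is precisely the point where the unramifiedness of $F/\BQ_p$ is essential: without it, the Kottwitz condition alone does not pin down the local structure of the Hodge filtration (which is why the Eisenstein conditions \eqref{furtherDr2} must be imposed in general), and Ahsendorf's relative displays no longer immediately provide the needed rigidity.
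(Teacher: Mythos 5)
There is a genuine gap at the very first step: no $O_D$-linear isogeny $\alpha\colon \BX\to\BX^\circ$ can exist when $r\neq r^\circ$. An isogeny of $p$-divisible groups induces an isomorphism of rational Dieudonn\'e modules and in particular preserves the dimension; but $\dim \BX = n\sum_\varphi r_\varphi = n(1+nm)$ with $m=\#\{\varphi\mid r_\varphi=n\}$, while $\dim\BX^\circ = n$. Neither Lemma \ref{uniqueframe} (which compares two framing objects for the \emph{same} $r$) nor Proposition \ref{comprDr} (which only identifies $r^\circ$-special objects with Drinfeld's) produces such an $\alpha$. The true relation between the two framing objects, as used in Section \ref{s:genfiber}, is $F=\pi^m F^\circ$ on a common rational Dieudonn\'e module --- a twist of the Frobenius, not an isogeny --- so the strategy of deforming $\alpha$ over $S$ collapses. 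A second, related problem: for $r\neq r^\circ$ the group $X$ is not a strict formal $O_F$-module (the induced $O_F$-action on $\Lie X$ is not through the structure morphism), so you cannot feed $X$ directly into Ahsendorf's equivalence between formal $O_F$-modules and relative displays; in the paper that equivalence is applied in the opposite direction, to \emph{output} the Drinfeld-special object.

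What the paper actually does in Section \ref{s:unramified}: take the ordinary ($\BZ_p$-)display $\CP=(P,Q,F,\dot F)$ of $X$ and decompose $P=\bigoplus_{i\in\BZ/d}P_i$ according to the $O_F\otimes_{\BZ_p}W(R)$-action, indexed by the embeddings of $F$ (not of $\tilde F$ as you propose). For $i\neq 0$ the rank condition forces $Q_i=P_i$ (if $r_i=0$) or $Q_i=I_RP_i$ (if $r_i=n$), so either $\dot F^{\#}$ or $F^{\#}$ is an isomorphism $W(R)\otimes_{F,W(R)}P_i\to P_{i+1}$; composing these maps around the cycle concentrates the data into a ``$d$-display'' $(P_0,Q_0,F_{\rm rel},\dot F_{\rm rel})$, which is then converted into a display relative to the ramified Witt vectors $W_{O_F}(R)$ and fed into Ahsendorf's theorem to produce a special formal $O_D$-module $X'$. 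This yields a morphism $\CM_r\to\CM_{r^\circ}$, shown to be an isomorphism by comparison with the special-fibre equivalence of Theorem \ref{equivofcat} together with flatness (Proposition \ref{identform}) --- a step your proposal does not address except via the quasi-inverse built from the nonexistent $\alpha^{-1}$. Your instinct that the constraints $r_\varphi\in\{0,n\}$ rigidify the components away from $\varphi_0$ is correct, but the mechanism is this Frobenius-composition on displays, not the deformation of an isogeny.
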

If $F/\BQ_p$ is ramified, we can still prove the following properties of $ \CM_r$ which are analogous to the properties of $\hat{\Omega}^n_F \hat{\otimes}_{O_F} O_{\breve E}$.
\begin{theorem}\label{mainDR}
The formal scheme $ \CM_r$ is flat over $\Spf\, O_{\breve E}$, and is $\pi$-adic. All its completed local rings are normal. Furthermore, 
\begin{altenumerate} 
\item there is an isomorphism between the special fibers 
$$ \CM_r\times_{\Spf O_{\breve E}}\Spec\, \bar k
\simeq \hat{\Omega}^n_F \times_{\Spf O_F}  \Spec\, \bar k.$$
\item there is an isomorphism between the generic fibers 
$$ \CM_r^{\rm rig}\simeq (\hat{\Omega}^n_F \times_{\Spf O_F} \Spf O_{\breve E})^{\rm rig} .$$
\end{altenumerate}
\end{theorem}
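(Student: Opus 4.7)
The four assertions of the theorem are of different natures, and I would establish them independently using the tools developed in sections \ref{s:formalmod}, \ref{s:localmod} and \ref{s:genfiber}. Representability of $\CM_r$ as a formal scheme locally formally of finite type over $\Spf O_{\breve E}$ will be a by-product of the local model construction of section \ref{s:localmod}, so in what follows I take this as given and focus on the structural properties and the two identifications.

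For part (i), the strategy is to reduce to Drinfeld's original moduli problem over $\bar k$-schemes. Following section \ref{s:formalmod}, I would pass to the (relative) Dieudonn\'e display of $(X,\iota)$ and decompose it according to \eqref{unrdecomp} into $\psi$-isotypic pieces. Over a $\bar k$-scheme the structure map sends $\pi$ to $0$, so each linear factor $T-\varphi(\pi)$ of the polynomials $Q_\psi(T)$ collapses to $T$ in $\CO_S[T]$, and the Eisenstein conditions \eqref{furtherDr2} become nilpotency statements for $\iota(\pi)$ on the eigenspaces $\Lie_\psi X$. One verifies via displays that these nilpotency statements are equivalent to the Drinfeld specialness condition for $(X,\iota)$ (the converse direction being essentially Proposition \ref{comprDr} applied to $r^\circ$). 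This identifies the restriction of $\CM_r$ to $\bar k$-schemes with Drinfeld's moduli problem, so \eqref{Urthm} yields
\begin{equation*}
\CM_r \times_{\Spf O_{\breve E}} \Spec \bar k \,\simeq\, \hat\Omega^n_F \times_{\Spf O_F} \Spec \bar k.
\end{equation*}

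For part (ii), I would invoke Scholze's theorem proved in section \ref{s:genfiber} via the Scholze--Weinstein classification \cite{SW} of $p$-divisible groups over $O_C$, where $C$ is a complete algebraically closed non-archimedean extension of $\breve E$. In that language, a point of $\CM_r^{\rm rig}(C)$ is determined by its rational Tate module together with the Hodge--Tate filtration on $\Lie X \otimes C$; this filtration is constrained only by the Kottwitz condition, i.e.\ by the function $r$. The Eisenstein conditions, which refine the integral action of $\iota(\pi)$ on $\Lie_\psi X$, become automatic once $p$ is inverted, because $\iota(\pi)$ acts $C$-linearly as $\pi$ does via the structure map and each $Q_{A_\psi}(\iota(\pi))$ then vanishes on the corresponding eigenspace of $\Lie X \otimes C$. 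Consequently, $\CM_r^{\rm rig}(C)$ coincides with the $C$-points of the Drinfeld functor base changed to $\breve E$, and \eqref{Urthm} identifies the two rigid-analytic spaces; the equality as rigid spaces follows by descent from geometric points.

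Finally, for flatness, $\pi$-adicity and normality of completed local rings, I would construct a local model diagram as in section \ref{s:localmod}. The local model $M^{\rm loc}$ is a closed subscheme of a product of Grassmannians over $\Spec O_E$ cut out by the Kottwitz and Eisenstein conditions on a standard $O_F\otimes \CO$-lattice, decomposed according to \eqref{unrdecomp}. \'Etale-locally, $\CM_r$ is smooth over $M^{\rm loc}$, so the three desired properties all reduce to the analogous statements for $M^{\rm loc}$, a finite-type object accessible by direct coordinate computation. In particular, $\pi$-adicity drops out of the observation that $\iota(\pi)$ acts nilpotently on the universal $\Lie_\psi X$ by virtue of the Eisenstein conditions. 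The main obstacle will be the coordinate analysis of $M^{\rm loc}$ at its worst degenerate points, where several of the Eisenstein conditions collide: showing that the scheme-theoretic intersection is a normal, flat complete intersection over $O_E$ (and not merely of the correct dimension) will require a careful case-by-case study of the ideal generated by the polynomials $Q_0$, $Q_{A_\psi}$ and $Q_{B_\psi}$, together with an application of Serre's normality criterion after establishing regularity in codimension one.
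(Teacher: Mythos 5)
Your architecture (reduce the special fiber to Drinfeld, use Scholze--Weinstein for the generic fiber, use a local model diagram for flatness and normality) matches the paper's, but in both (i) and (ii) you conflate two moduli problems that are genuinely different, and this is where the real content lies. Over a $\bar k$-scheme an $r$-special $O_D$-module is \emph{not} a special formal $O_D$-module in Drinfeld's sense: its Lie algebra has rank $n\sum_\varphi r_\varphi$, with $\Lie_\psi X$ of rank $a_\psi n^2+\epsilon_\psi n$, whereas Drinfeld's objects have $n$-dimensional Lie algebra concentrated at $\psi_0$. So the Eisenstein conditions, even after $\pi\mapsto 0$, cannot be ``equivalent to the Drinfeld specialness condition for $(X,\iota)$'' --- the two conditions do not even apply to objects of the same height of Hodge filtration. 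What the paper does (Theorem \ref{equivofcat}) is construct an \emph{equivalence of categories} $\mathcal{P}\mapsto\mathcal{P}'$ by modifying the display: $F'_\psi=F_\psi\pi^{a_\psi}$, $\dot F'_\psi=\dot F_\psi\pi^{a_\psi}$ and a new $Q'$, cf.\ \eqref{RamFunkt1e}; this is the key idea your proposal is missing, and Corollary \ref{specfibscheme} then gives (i). The same issue recurs in (ii): the Eisenstein conditions do become vacuous rationally, but the $C$-points of $\CM_r^{\rm rig}$ and of the Drinfeld functor parametrize $p$-divisible groups of different dimension with quasi-isogenies to \emph{different} framing objects (whose isocrystals differ by $F=\pi^m F^\circ$). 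The identification is not a coincidence of functors but a twist: in the Scholze--Weinstein picture one must modify the vector bundle $\CE^\circ_F$ on the Fargues--Fontaine curve by the line bundle $\CL^m$ using the Lubin--Tate sections $LT_\varphi$, compare the images of the two period morphisms, and only then use partial properness and the triviality of the fibers of $\pi_{K_0}$ to get an isomorphism of adic spaces. ``Descent from geometric points'' alone does not produce the isomorphism.

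For flatness and normality your plan diverges from the paper in a way that makes it substantially harder. You propose a direct coordinate analysis of the local model at its degenerate points, aiming to exhibit it as a flat normal complete intersection via Serre's criterion; the authors explicitly remark that they cannot even establish regularity or semi-stable reduction of $\BM_r$, so this route is at best unproven. The paper instead uses Lemma \ref{Eisen2l} to trap the universal filtration between $\Im\,\Pi^{(a_0+1)n}$ and $\Im\,\Pi^{a_0 n}$, thereby identifying the special fiber of $\BM_r$ with that of the Drinfeld local model $\BM^\circ$ (hence reduced, with $n$ components of dimension $n-1$, Corollary \ref{locmodred}); flatness then follows from properness plus a transitive $\BZ/n$-action on the components (Corollary \ref{locmodflat}), and normality from flatness, normal generic fiber and reduced special fiber via the criterion of Pappas--Zhu. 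These structural facts are transported to $\CM_r$ by the local model diagram in Proposition \ref{identform}. You should replace the proposed coordinate computation by this reduction to the Drinfeld case.
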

Here the proof of point (ii) is due to P.~Scholze. 
\medskip

We also prove the following  variant of this theorem in the Lubin-Tate
context. Let $F$ and $\varphi_0$ be as before, fix an integer $n\geq 2$
and let $r$ and $E=E_r$ have the same meaning as before. We now
consider $p$-divisible formal groups $X$ of height $n d$ over
$O_E$-schemes $S$ with an action $\iota : O_F \lra \End (X)$. We
impose the following Kottwitz condition, 
\begin{equation}\label{kottwitzLt}
 {\rm char} \, (\iota (x) \mid \Lie X) = \prod\nolimits_{\varphi} (T -
 \varphi (x))^{r_\varphi} \, , \,\, x \in O_F \, . 
\end{equation}
In addition, we impose  Eisenstein conditions (where the second condition in \eqref{furtherDr2} is changed into  
$$
\bigwedge^{2} \big( Q_{A_{\psi_0}}(\iota(\pi) \vert\Lie_{\psi_0}
    X) \big) =  0 ,
$$ see
(\ref{EisensteinFe})). 

We fix a pair $(\BX, \iota_{\BX})$ over $\bar k$ as above. It is easy
to see that 
$\BX$ is unique up to $O_F$-linear isogeny. We may therefore define a functor $\CM^F_r$ on ${\rm Nilp}_{O_{\breve{{E}}}}$ analogous to the functor $\CM_r$ above. The formal scheme representing this functor
will be denoted by the same symbol. If $r=r^\circ$ is the Drinfeld
function, then $\CM^F_{r^\circ}$ can be identified with the Lubin-Tate
deformation space (this follows from Proposition \ref{comprDr}).  Our
main result in this context is that this continues to hold for
arbitrary $r$.  
\begin{theorem}\label{mainLT}
 The formal scheme $\CM^F_r$ is isomorphic to $\Spf O_{\breve{E}}
 [[t_1, \cdots , t_{n-1}]]$. 
\end{theorem}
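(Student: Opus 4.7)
The plan is to follow the same general strategy that underlies Theorem \ref{mainDR}, but adapted to the Lubin-Tate setting where the framing carries only an $O_F$-action and the second Eisenstein condition is strengthened from $\bigwedge^{n+1}$ to $\bigwedge^{2}$. The goal is to show that $\CM^F_r$ is a formally smooth formal scheme over $\Spf O_{\breve E}$ of relative dimension $n-1$ whose reduced special fiber is a single closed point; any such formal scheme is automatically $\Spf O_{\breve E}[[t_1,\ldots,t_{n-1}]]$.

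\textbf{Step 1 (closed fiber).} First I would verify representability by a formal scheme, by the same formal-categorical argument used for $\CM_r$. Then I would show that the reduced special fiber $(\CM^F_r)_{\red}\otimes_{O_{\breve E}}\bar k$ consists of a single $\bar k$-point. By the Lubin-Tate analog of Lemma~\ref{uniqueframe}, any two $r$-special $O_F$-modules over $\bar k$ are $O_F$-linearly isogenous, so we may fix $X=\BX$. The isomorphism class of a framed point is then the coset of the quasi-isogeny $\varrho$ modulo $\Aut_{O_F}(\BX)=O_D^\times$, where $D=\End^\circ_{O_F}(\BX)$ is the central $F$-division algebra of invariant $1/n$. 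Since the reduced norm sends $D^\times/O_D^\times$ isomorphically onto $F^\times/O_F^\times=\BZ$, the height-$0$ condition forces this coset to be trivial, giving a unique closed point.

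\textbf{Step 2 (local model and formal smoothness).} I would apply the local-model machinery of section~\ref{s:localmod} to the formal neighborhood of the unique closed point. The Kottwitz condition fixes the $\CO_S$-rank of each $\Lie\!_\psi X$ in \eqref{eigensp}. For $\psi\neq\psi_0$, the Eisenstein condition $Q_{A_\psi}(\iota(\pi)\mid \Lie\!_\psi X)=0$ together with the rank count rigidifies the $\iota(\pi)$-action on $\Lie\!_\psi X$ (it becomes a sum of eigenspaces for the roots of $Q_{A_\psi}$), contributing no moduli. On $\Lie\!_{\psi_0}X$, the two conditions $(Q_0\cdot Q_{A_{\psi_0}})(\iota(\pi))=0$ and $\bigwedge^{2}Q_{A_{\psi_0}}(\iota(\pi))=0$ define the local model: the first places the image of $Q_{A_{\psi_0}}(\iota(\pi))$ inside the kernel of $Q_0(\iota(\pi))=\iota(\pi)-\varphi_0(\pi)$, while the $\bigwedge^{2}=0$ condition forces that image to have rank at most one, i.e.\ to be a line. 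I would check smoothness by a direct tangent space computation, as in section~\ref{s:localmod}; the key simplification relative to the Drinfeld case is that the $\bigwedge^{2}$ constraint is rigid enough to eliminate the singularities appearing there and produce a smooth local model.

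\textbf{Step 3 (dimension and conclusion).} The remaining moduli datum is the choice of a line in a one-dimensional target inside $\Lie_{\psi_0}X$, which amounts dually to the choice of a hyperplane in a rank-$n$ free module, giving relative dimension $n-1$ over $O_{\breve E}$. Combined with Grothendieck-Messing, this shows $\CM^F_r$ is formally smooth over $\Spf O_{\breve E}$ of relative dimension $n-1$, and, together with the unique closed point from Step~1, identifies $\CM^F_r$ with $\Spf O_{\breve E}[[t_1,\ldots,t_{n-1}]]$.

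\textbf{Main obstacle.} The hard part will be Step~2: verifying that the precise combination of the Kottwitz condition and the modified Eisenstein conditions yields a formally smooth local model. This is a parallel but genuinely different calculation from the Drinfeld case, since the $\bigwedge^{2}$ condition cuts the model down to dimension $n-1$ rather than $n$, and the smoothness (rather than mere flatness and normality obtained in Theorem~\ref{mainDR}) must be extracted from this stronger condition. Identifying the correct local chart and checking that Grothendieck-Messing deformations stay inside the cut-out locus is the technical heart of the argument.
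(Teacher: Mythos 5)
Your overall strategy coincides with the paper's: show that $\CM^F_r(\bar k)$ is a single point, then show that the relevant local model is smooth of relative dimension $n-1$ over $O_E$, and conclude by Grothendieck--Messing. Your Step 1 is essentially the paper's Proposition \ref{uniqueLT} and its corollary; the paper obtains the uniqueness of the framing object and the rigidity of height-zero quasi-isogenies by twisting the Dieudonn\'e module $(M,F,V)$ into $(M,\oplus\,\pi^{a_\psi}F,\oplus\,\pi^{-a_\psi}V)$ and reducing to the classical Lubin--Tate case $r=r^\circ$, which is the justification your appeal to ``the Lubin--Tate analog of Lemma \ref{uniqueframe}'' implicitly needs. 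Where you diverge is in how the key Step 2 is finished. You propose a direct tangent-space computation and flag it as the main obstacle; the paper avoids any deformation-theoretic calculation by identifying the special fiber of the local model $\BM^F_r$ explicitly: applying Lemma \ref{Eisen2l} to the dual of $W_{0,S}$ pins $\CF_0$ between $\Im\,\pi^{a_0+1}\otimes_{\bar k}\CO_S$ and $\Im\,\pi^{a_0}\otimes_{\bar k}\CO_S$, whose quotient has dimension $n$, and $\CF_0$ then corresponds to a codimension-one direct summand of that quotient, so $\BM^F_r\otimes_{O_E}\bar k\simeq\BP^{n-1}_{\bar k}$. Since the generic fiber is also $\BP^{n-1}$ and flatness follows as in Corollary \ref{locmodflat}, smoothness is automatic (flat, with smooth irreducible fibers of the same dimension $n-1$); no tangent-space analysis is required. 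Your heuristic that the $\bigwedge^{2}$ condition forces the image of $Q_{A_{\psi_0}}(\iota(\pi))$ on $\Lie_{\psi_0}X$ to be at most a line is correct and is precisely what replaces the union of $n$ crossing components of the Drinfeld local model by a single smooth $\BP^{n-1}$; but as written your proposal leaves this step as a plan rather than a proof, whereas the machinery to execute it (Lemma \ref{Eisen2l} and the flatness argument of section \ref{s:localmod}) is already in place and needs only to be invoked.
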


\section{The Kottwitz and Eisenstein conditions} \label{s:kotteis}

In this section, we analyze the conditions that can be put on a
locally free module with $O_D$-action. We continue with the same
notation as before.  
In addition, let $\tilde{E} \subset
\bar{\mathbb{Q}}_p$ be a normal extension of $\mathbb{Q}_p$ which
contains the images of all $\mathbb{Q}_p$-algebra homomorphisms
$\tilde{F} \rightarrow \bar{\mathbb{Q}}_p$. We have $E \subset
\tilde{E}$.  

We denote by
$\Nrd_{\varphi}: D \otimes_{F, \varphi} \tilde{E} \rightarrow
\tilde{E}$ the reduced norm. Using it, we define the polynomial function 
\begin{equation}\label{Nrdr1e}
\Nrd_{r}: D \otimes_{\mathbb{Q}_p} \tilde{E} \cong \prod_{\varphi} D
\otimes_{F, \varphi} \tilde{E} \; \overset{\prod_{\varphi}
  \Nrd_{\varphi}^{r_{\varphi}}}{\longrightarrow} \; \tilde{E}.  
\end{equation} 
If $\mathcal{M}$ is a quasicoherent sheaf on a scheme $S$ we denote by 
$\mathbb{V}_S(\CM )(T) = \Gamma(T,\mathcal{M}_T)$ the
corresponding flat sheaf on the category of $S$-schemes $T$. This
sheaf is representable by a scheme over $S$ if $\mathcal{M}$ is a
finite locally free $\mathcal{O}_S$-module.  

We write simply $\mathbb{V}(D)$ for the affine space over $\mathbb{Q}_p$
associated to $D$. Then we may regard $\Nrd_{r}$ as a polynomial
function (= morphism of schemes). It is defined over $E$,
\begin{equation}\label{Nrdr2e}
\Nrd_{r}: \mathbb{V}(D)_{E} \rightarrow \mathbb{A}^{1}_E.   
\end{equation}
Clearly this function is homogeneous of degree $\sum_{\varphi}
nr_{\varphi}$.

Let $\SuetO_{D}$ be the ring of integers of $D$. Let $\tilde{F}
\subset D$ be an unramified extension of $F$ of degree $n$. We denote
by $\tau \in \Gal(\tilde{F}/F)$ the Frobenius automorphism. Then we
may write
\begin{equation}\label{OD1e}
   \SuetO_D = \SuetO_{\tilde{F}}[\Pi], \quad \Pi^n = \pi, \quad \Pi a
   = \tau(a) \Pi, \; \text{for} \; a \in \SuetO_{\tilde{F}}. 
\end{equation}
Here $\Pi$ is a prime element of $\SuetO_D$ and $\pi$ is a prime
element of $\SuetO_{F}$. 

 We denote by $\mathbb{V}(\SuetO_{D})$ the corresponding affine space
 over $\mathbb{Z}_p$. We will now define an integral version of
 (\ref{Nrdr2e}).     


We begin with a general remark. Let $S$ be an $\SuetO_E$-scheme and let
$\mathcal{L}$ be a finite 
locally free $\mathcal{O}_S$-module with an action of $\SuetO_D$,
i.e. a morphism of $\mathbb{Z}_p$-algebras
\begin{equation}\label{ODmod1e}
\iota:  \SuetO_D \rightarrow \End_{\mathcal{O}_S}(\mathcal{L}).  
\end{equation}
We will call $(\mathcal{L},\iota)$ an {\it $\SuetO_D$-module over $S$}. 

If $T$ is an $S$-scheme and $\alpha \in \Gamma(T, \SuetO_{D}
\otimes_{\mathbb{Z}_p} \mathcal{O}_T)$,  we can take the determinant
$\det (\alpha | \mathcal{L}_T )$. This defines a morphism
\begin{displaymath}
\det\textstyle{_{\mathcal{L}}}: \mathbb{V}(\SuetO_D)_{S} \rightarrow
\mathbb{A}^{1}_S.  
\end{displaymath}
Let $\varphi \in \Phi$. We define an embedding
\begin{displaymath}
\SuetO_D \otimes_{\suetO_F, \varphi} \SuetO_{\tilde{E}} \rightarrow
M(n\times n, \SuetO_{\tilde{E}}).   
\end{displaymath}
For this we choose an embedding $\tilde{\varphi}: \tilde{F}
\rightarrow \bar{\mathbb{Q}}_p$ which extends $\varphi$ and
define for $x \in \tilde{F}$, 
 
\begin{equation}\label{Pphi1e}
x \mapsto 
\left(
\begin{array}{cccc}
\tilde{\varphi}(x) & 0 & \ldots & 0\\
0 & \tilde{\varphi}(\tau(x)) & \ldots & 0\\
\multicolumn{4}{c}{\ldots}\\
0 & 0 & \ldots & \tilde{\varphi}(\tau^{n-1}(x))
\end{array}
\right),
\; \Pi \mapsto
\left(
\begin{array}{ccccc}
0 & 1 & 0 & \ldots & 0\\
0 & 0 & 1 & \ldots & 0\\
\multicolumn{5}{c}{\ldots}\\
\varphi(\pi) & 0 & 0 & \ldots & 0
\end{array}
\right).
\end{equation} 

\medskip

Consider the standard $M(n\times n, \SuetO_{\tilde{E}})$-module
$\SuetO_{\tilde{E}}^{n}$. Via restriction of scalars 
\begin{displaymath}
\SuetO_D \otimes_{\mathbb{Z}_p} \SuetO_{\tilde{E}} \rightarrow
\SuetO_D \otimes_{\suetO_F, \varphi} \SuetO_{\tilde{E}} \rightarrow 
M(n\times n, \SuetO_{\tilde{E}}).   
\end{displaymath}
we obtain a $\SuetO_D \otimes_{\mathbb{Z}_p} \SuetO_{\tilde{E}}$-module
$P_{\varphi}$. We define the 
$\SuetO_D \otimes_{\mathbb{Z}_p} \SuetO_{\tilde{E}}$-module 
\begin{displaymath}
P_r = \bigoplus P_{\varphi}^{r_{\varphi}}. 
\end{displaymath}
This module defines a polynomial function
$\Nrd_{r}: \mathbb{V}(\SuetO_D)_{\suetO_{\tilde{E}}} \rightarrow
\mathbb{A}^{1}_{\suetO_{\tilde{E}}} $,  
\begin{displaymath}
\Nrd_{r}(\xi) = \det(\xi |P_r \otimes_{\suetO_{\tilde{E}}} R),
\quad \xi \in  \SuetO_D \otimes_{\mathbb{Z}_p} R,
\end{displaymath}
where $R$ is an arbitrary $\SuetO_{\tilde{E}}$-algebra. Similarly we
have polynomial functions $\Nrd_{\varphi}:
\mathbb{V}(\SuetO_D)_{\suetO_{\tilde{E}}} \rightarrow 
\mathbb{A}^{1}_{\suetO_{\tilde{E}}}$ given by $P_{\varphi}$.  
The polynomial function $\Nrd_{r}$ is invariant under $\Gal(\tilde{E}/E)$ and
therefore defines a polynomial function
\begin{equation}\label{normmorph}
\Nrd_r: \mathbb{V}(\SuetO_D)_{\suetO_{E}} \rightarrow
\mathbb{A}^{1}_{\suetO_{E}}.
\end{equation}
It follows from (\ref{Pphi1e}) that $\Pi$ acts on $P_{\varphi}
\otimes_{O_{\tilde{E}}} \kappa_{\tilde{E}}$ as zero. Therefore the
latter is a $(O_D/\Pi O_D) \otimes_{O_{\tilde{F}}}
\kappa_{\tilde{E}}$-module. We see that the base change
$\Nrd_{\varphi}$ factors through a polynomial function
\begin{displaymath}
(\Nrd_{\varphi})_{\kappa_{\tilde{E}}}:
\mathbb{V}(\SuetO_D/\Pi\SuetO_D)_{\kappa_{\tilde{E}}} \rightarrow
\mathbb{A}^{1}_{\kappa_{\tilde{E}}}.  
\end{displaymath}
We obtain that $(\Nrd_{r})_{\kappa_{\tilde{E}}}$ factors too,
\begin{equation*}
(\Nrd_r)\kappa_{\tilde{E}}:
\mathbb{V}(\SuetO_D/\Pi\SuetO_D) \times_{\Spec \mathbb{F}_p} \Spec
\kappa_{\tilde{E}} \rightarrow \mathbb{A}^{1}_{\kappa_{\tilde{E}}}.  
\end{equation*}
The affine algebra on the left hand side is an integral domain. 
Therefore $(\Nrd_{\varphi})_{\kappa_{\tilde{E}}}$ is a non zero-divisor
as an element of this algebra. This remains true after base change to
any $\kappa_{\tilde{E}}$-algebra $R$.



\begin{definition} Let $S$ be an $O_E$-scheme and let $(\CL, \iota)$ be an $O_D$-module over $S$. 
We say that {\it $(\mathcal{L}, \iota)$ satisfies the Kottwitz condition
$(\mathbf{K}_r)$ with respect to $r$}, if
\begin{equation}\label{KCond1e}
\det\textstyle{_{\mathcal{L}}} = (\Nrd_{r})_S,
\end{equation}
where the right hand side is the base change with respect to
$S \rightarrow \Spec \SuetO_{E}$.  
\end{definition}
\begin{remark}\label{kottchar}
By \cite{H}, Prop. 2.1.3, the condition is equivalent to the identity of polynomials in $\CO_S[T]$ (comp. \eqref{charpolDr})
\begin{equation}
 {\rm char} (\iota (x) \vert \CL) = \prod\nolimits_{\varphi} \,
 \,\varphi\big( {\rm chard} \, \, (x) (T)\big)^{r_\varphi} ,  
\quad \forall x \in O_D\, ;
\end{equation}
(this uses Amitsur's formula, comp. \cite{Ch}, Lemma~1.12). 
\end{remark}
It is clear that the Kottwitz condition is a closed condition. If the Kottwitz condition is fulfilled, we have
\begin{displaymath}
   \rank_S \mathcal{L} = n \sum_{\varphi} r_{\varphi} ,
\end{displaymath}
because $\Nrd_{r}$ is homogeneous of this degree.

Recall  the maximal unramified subfield $F^{t} \subset F$ and its
residue field $\kappa$. We denote  by $\tilde{\kappa}$ the residue
field of $\tilde{F}$. The maximal 
unramified subfield of $\tilde{F}$ is denoted by $\tilde{F}^{t}$.  
In addition to  $\Psi =
\Hom_{\mathbb{Q}_p}(F^{t},\bar{\mathbb{Q}}_{p})$,  we introduce
$\tilde{\Psi} = \Hom_{\mathbb{Q}_p}(\tilde{F}^{t},
\bar{\mathbb{Q}}_{p})$. 

We now introduce another condition which will turn out to be  weaker
than the Kottwitz condition. 
Let $R$ be an $\SuetO_{\tilde{E}}$-algebra. Let $(L,\iota)$ be an
$\SuetO_D$-module over $R$. Then we have the decompositions
\begin{equation}\label{decompunr}
L = \bigoplus_{\psi \in \Psi} L_{\psi}, \quad L = 
\bigoplus_{\tilde{\psi} \in \tilde{\Psi}} L_{\tilde{\psi}}. 
\end{equation}

For example, the second of these decompositions is induced by
\begin{displaymath}
\SuetO_{\tilde{F}^{t}} \otimes_{\mathbb{Z}_p} \SuetO_{\tilde{E}} =
\prod\nolimits_{\tilde{\psi} \in \tilde{\Psi}} \SuetO_{\tilde{E}}.
\end{displaymath}

For $\psi \in \Psi$ let $\Phi_{\psi}$ the set of all embeddings $\varphi:
F \rightarrow \bar{\mathbb{Q}}_p$ whose restriction to $F^{t}$ is
$\psi$. We define  
\begin{equation}\label{KC4e}
r_{\psi} = \sum_{\varphi \in \Phi_{\psi}} r_{\varphi}
\end{equation}

\begin{definition}\label{rkcond1d} Let $R$ be an $O_{\tilde E}$-algebra. 
We say that an $\SuetO_D$-module $(L, \iota)$ over $R$ {\it satisfies the
rank condition $(\mathbf{R}_r)$ with respect to $r$} if, for all $\tilde{\psi} \in
\tilde{\Psi}$, 
\begin{equation}\label{KC5e}
\rank_R L_{\tilde{\psi}} = r_{\psi},
\end{equation}
where $\psi$ denotes the restriction of $\tilde{\psi}$ to $F^{t}$.
We will write $r_{\tilde{\psi}} := r_{\psi}$. 
\end{definition}
The rank condition is obviously an open condition (the rank goes up under specialization). It is also a closed condition since $\sum\nolimits_{\tilde \psi} \rank_R L_{\tilde{\psi}}=\rank L$ is constant on the base. To check the rank condition it is enough to check it for the geometric
points of $\Spec R$. If $R$ is an arbitrary $\SuetO_{E}$-algebra, we say that
the rank condition is fulfilled, if it is fulfilled with respect to
any base change $R \rightarrow  \tilde{R}$ and an extension of the
$\SuetO_E$-algebra structure to an $\SuetO_{\tilde{E}}$-algebra
structure on $\tilde{R}$. The rank condition is then independent of
the last choice.

\begin{remark}  The rank condition is  independent of the choice of
the chosen isomorphism (\ref{OD1e}). Indeed let $U \subset D$
be an unramified extension of degree $n$ of $F$. Then we could
reformulate the rank condition using the action of $\SuetO_{U^t}
\otimes_{\mathbb{Z}_p} \SuetO_{\tilde{E}}$. However, this yields the same
condition. Indeed, since $U$ and $\tilde{F}$ are isomorphic field
extensions of $F$,  there is by Skolem-Noether an element $u \in D$,
such that $u \tilde{F} u^{-1} = U$. Replacing $u$ by $u\Pi^{m}$ for a
suitable integer $m$, we may assume that $u \in \SuetO_D^\times$. We denote
by $L_{[u]}$ the $R$-module $L$ with the new $\SuetO_D$-action 
\begin{displaymath}
   a \cdot_{new} \ell = u a u^{-1} \ell. 
\end{displaymath}
But the  decomposition (\ref{decompunr}) for $L_{[u]}$, 
\begin{displaymath}
   L_{[u]} =  \bigoplus_{\tilde{\psi} \in \tilde{\Psi}} (L_{[u]})_{\tilde{\psi}} \, ,
\end{displaymath}
is exactly the decomposition coming from the $\SuetO_{U^t}$-action on
$L$. Since the multiplication by $u$ $ L \rightarrow L_{[u]}$ is an
isomorphism of $\SuetO_{D}$-modules we obtain the independence. 
\end{remark}

We denote by $\kappa_{\tilde{E}}$ the residue class field of
$\SuetO_{\tilde{E}}$.   
We consider a polynomial function over a {\it reduced}
$\kappa_{\tilde{E}}$-algebra $R$,  
\begin{equation}\label{KC2e}
\chi: \mathbb{V}(\SuetO_D)_{R} \rightarrow
\mathbb{A}^{1}_{R},  
\end{equation}
which is multiplicative with respect to the ring structures of these
schemes. We are given for each $R$-algebra $A$ a multiplicative map
\begin{equation}\label{KC1e}
\chi_A: \SuetO_D/p\SuetO_D \otimes_{\mathbb{F}_p} A \rightarrow A.  
\end{equation}
Let $a,b \in 
\SuetO_D/p\SuetO_D \otimes_{\mathbb{F}_p} A$. We claim that 
\begin{equation}\label{poly1e}
\chi ((a + (\Pi \otimes 1)b)) = \chi (a). 
\end{equation} 
We regard this as an identity of polynomial functions on
$\mathbb{V}(\SuetO_D)_R \times_{R} \mathbb{V}(\SuetO_D)_R$. We
consider the units of $\SuetO_D$ as a subscheme $G \subset 
\mathbb{V}(\SuetO_D)_R$. This is dense in each fiber over
$R$. Therefore it suffices to show (\ref{poly1e}) in the case where
$a$ is a unit. By  multiplicativity it suffices to show that 
\begin{equation}\label{poly2e}
\chi ((1 + (\Pi \otimes 1)b)) = \chi (1). 
   \end{equation}
We may restrict our attention to the universal case where $A = R[Y_1,
\ldots, Y_t]$, which is also reduced.  

We have 
\begin{displaymath}
(1 + (\Pi \otimes 1)b)^{p^{s}} = 1^{p^{s}}
\end{displaymath}
for some $p$-power $p^{s}$.  Since $A$ is reduced and $\chi$ is
multiplicative,  we deduce (\ref{poly2e}).

Therefore (\ref{KC1e}) is equivalent to a functorial map 
\begin{displaymath}
\SuetO_D/\Pi \SuetO_D \otimes_{\mathbb{F}_p} A \rightarrow A.  
\end{displaymath}
We have $\tilde{\kappa} = \SuetO_D/\Pi \SuetO_D$. Therefore for each
$\tilde{\psi} \in \tilde{\Psi} = \Hom_{\mathbb{F}_p}(\tilde{\kappa},
\kappa_{\tilde{E}}) = \Hom_{\mathbb{Q}_p}(\tilde{F}^{t},
\bar{\mathbb{Q}}_p)$,  we obtain a polynomial function
\begin{equation}\label{KCchi1e}
\chi_{\tilde{\psi}}: \SuetO_D \rightarrow \tilde{\kappa}
\overset{\tilde{\psi}}{\rightarrow} \kappa_{\tilde{E}} \rightarrow A.  
\end{equation}
If $\Spec R$ is connected and reduced, we deduce that the only
multiplicative polynomial functions (\ref{KC2e}) have the form
\begin{equation}\label{KCchi2e}
\prod_{\tilde{\psi}} \chi_{\tilde{\psi}}^{e_{\tilde{\psi}}},
\end{equation}
for suitable exponents $e_{\tilde{\psi}}$. These functions are also defined if $R$ is an arbitrary
$\kappa_{\tilde{E}}$-algebra. 
\begin{lemma}\label{KC0l}
Let $R$ be a reduced $\kappa_{E}$-algebra. Let $(L, \iota)$ be
an $\SuetO_D$-module over $R$. In particular $L$ is a 
finitely generated locally free $R$-module. 

Then the Kottwitz condition $(\mathbf{K}_r)$ for $(\mathcal{L},
\iota)$ is equivalent to the rank condition $(\mathbf{R}_r)$.
For an arbitrary $\SuetO_E$-algebra $R$ the condition $(\mathbf{K}_r)$
implies the condition $(\mathbf{R}_r)$.
\end{lemma}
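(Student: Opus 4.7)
The plan is to view both $\det_{\mathcal{L}}$ and $\Nrd_r$ as multiplicative polynomial functions on $\mathbb{V}(O_D)$ and to compare them via the canonical expansion $\prod_{\tilde{\psi}} \chi_{\tilde{\psi}}^{e_{\tilde{\psi}}}$ provided by \eqref{KCchi2e}. In both cases the exponents will be read off from the restriction to the unramified subring $O_{\tilde{F}^{t}} \subset O_D$, and the rank condition will come out as the statement that the two exponent sequences coincide.

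For the implication $(\mathbf{K}_r) \Rightarrow (\mathbf{R}_r)$ over an arbitrary $O_E$-algebra $R$, I would first base change to an $O_{\tilde{E}}$-algebra; by the remark following Definition \ref{rkcond1d}, this is harmless for the rank condition, and $(\mathbf{K}_r)$ is preserved. I would then choose $a \in O_{\tilde{F}^{t}}$ whose reduction generates $\tilde{\kappa}/\mathbb{F}_p$, so that $\tilde{\psi}(a) - \tilde{\psi}'(a) \in O_{\tilde{E}}^{\times}$ whenever $\tilde{\psi} \neq \tilde{\psi}'$. Using the decomposition \eqref{decompunr}, the left-hand side of the characteristic-polynomial form of $(\mathbf{K}_r)$ (Remark \ref{kottchar}) at $x = a$ becomes $\prod_{\tilde{\psi}}(T - \tilde{\psi}(a))^{\rank L_{\tilde{\psi}}}$. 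For the right-hand side, since $\tilde{F} = F \otimes_{F^{t}} \tilde{F}^{t}$, extensions of $\varphi \colon F \to \bar{\mathbb{Q}}_p$ to $\tilde{F}$ restrict bijectively to extensions of $\psi = \varphi|_{F^{t}}$ to $\tilde{F}^{t}$, whence $\varphi({\rm chard}(a))(T) = \prod_{\tilde{\psi}|_{F^{t}} = \psi}(T - \tilde{\psi}(a))$; reassembling over all $\varphi$, the right-hand side becomes $\prod_{\tilde{\psi}}(T - \tilde{\psi}(a))^{r_{\tilde{\psi}}}$. Pairwise coprimality of the linear factors in $\mathcal{O}_S[T]$ then forces $\rank L_{\tilde{\psi}} = r_{\tilde{\psi}}$.

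For the converse over a reduced $\kappa_E$-algebra $R$, I would base change to $\tilde{R} = R \otimes_{\kappa_E} \kappa_{\tilde{E}}$, which remains reduced since $\kappa_{\tilde{E}}/\kappa_E$ is a separable extension of fields. Both $\det_{\mathcal{L}}$ and $\Nrd_r$ are multiplicative polynomial functions on $\mathbb{V}(O_D)_{\tilde{R}}$, so by the analysis leading to \eqref{KCchi2e}, each factors through $O_D/\Pi O_D = \tilde{\kappa}$ and is uniquely of the form $\prod_{\tilde{\psi}} \chi_{\tilde{\psi}}^{e_{\tilde{\psi}}}$. Evaluating at $a \in O_{\tilde{F}^{t}}$, the eigenspace decomposition yields $\det_{\mathcal{L}}(a) = \prod_{\tilde{\psi}} \tilde{\psi}(\bar{a})^{\rank L_{\tilde{\psi}}}$, while \eqref{Pphi1e} gives $\det(a \mid P_{\varphi}) = \prod_{i=0}^{n-1} \tilde{\varphi}(\tau^{i}a) = \prod_{\tilde{\psi}|_{F^{t}} = \psi} \tilde{\psi}(a)$, so the exponent of $\chi_{\tilde{\psi}}$ in $\Nrd_r = \prod_{\varphi} \Nrd_{\varphi}^{r_{\varphi}}$ equals $\sum_{\varphi|_{F^{t}} = \psi} r_{\varphi} = r_{\tilde{\psi}}$. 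Under $(\mathbf{R}_r)$ the exponent sequences match, giving $\det_{\mathcal{L}} = \Nrd_r$ over $\tilde{R}$; faithful flatness of $R \to \tilde{R}$ then descends the identity to $R$.

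The main technical obstacle is the reverse direction: the classification $\prod \chi_{\tilde{\psi}}^{e_{\tilde{\psi}}}$ rests on the passage from $O_D/pO_D$ to $O_D/\Pi O_D$ carried out in \eqref{poly1e}--\eqref{poly2e}, which uses the identity $(1 + \Pi b)^{p^{s}} = 1$ together with multiplicativity of $\chi$; this step crucially requires reducedness of the base, which is exactly why $(\mathbf{R}_r) \Rightarrow (\mathbf{K}_r)$ is stated only under that hypothesis.
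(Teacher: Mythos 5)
Your proposal is correct and follows essentially the same route as the paper: both directions reduce, after base change to an $O_{\tilde{E}}$- (resp.\ $\kappa_{\tilde{E}}$-) algebra, to comparing the exponents of $\det_{\mathcal{L}}$ and $\Nrd_r$ in the canonical expansion $\prod_{\tilde{\psi}}\chi_{\tilde{\psi}}^{e_{\tilde{\psi}}}$ of a multiplicative polynomial function over a reduced base, the exponents being $\rank L_{\tilde{\psi}}$ on one side and $r_{\tilde{\psi}}$ on the other. The only (harmless) difference is in the implication $(\mathbf{K}_r)\Rightarrow(\mathbf{R}_r)$, where you evaluate the characteristic-polynomial identity at a generator of $O_{\tilde{F}^{t}}$ and use coprimality of the linear factors at geometric points, giving a uniform argument where the paper instead splits into the residue-characteristic case (already covered by the multiplicative-function analysis) and the characteristic-zero case (declared clear).
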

\begin{proof} 
To prove this we make a base change $R \mapsto R \otimes_{O_E}
O_{\tilde{E}}$ which is reduced if $R$ is a reduced
$\kappa_E$-algebra. Therefore we may assume that $R$ is a
$O_{\tilde{E}}$-algebra. 

The first assertion follows because a polynomial function is uniquely
determined by the numbers $e_{\tilde{\psi}}$ in (\ref{KCchi2e}). 

The last assertion depends only on the geometric fibers. In this case
we have either a $\kappa_{\tilde{E}}$-algebra or an
$\SuetO_{E}$-algebra of characteristic $0$. We have already seen the
first case. The characteristic $0$ case is clear. 

\end{proof} 

\begin{proposition}\label{KC1p}
Let us assume that $r=r^\circ$, i.e., $r_{\varphi} = 0$ for $\varphi
\neq \varphi_0$.  
Let $R$ be a $\kappa_{\tilde{E}}$-algebra. Let $(L,\iota)$ be
a $\SuetO_D$-module over $R$. 
Then $(L, \iota)$ satisfies the condition $(\mathbf{K}_r)$ iff $\pi$
annihilates $L$ and the rank condition $(\mathbf{R}_r)$ is
fulfilled. 

The rank condition says in this case that for all
$\tilde{\psi}: \tilde{\kappa} \rightarrow  \kappa_{\tilde{E}}$
\begin{displaymath}
\rank L_{\tilde{\psi}} = 1, \; \; \text{if} \; \;
\tilde{\psi}_{|F^{t}} =\psi_0= {\varphi_0} _{|F^{t}}, \quad \rank
L_{\tilde{\psi}} = 0, \; \text{else}. 
\end{displaymath}
\end{proposition}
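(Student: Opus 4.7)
The plan is to treat the two implications separately, using a brief Cayley--Hamilton argument for the forward direction and an explicit matrix computation for the backward one. \emph{Forward direction.} Assume $(\mathbf{K}_r)$. The rank condition $(\mathbf{R}_r)$ is immediate from Lemma~\ref{KC0l}. To show $\pi L = 0$, apply the characteristic polynomial form of the Kottwitz condition (Remark~\ref{kottchar}) to the prime element $\Pi \in \SuetO_D$. Since $\Pi^n = \pi$ is central in $D$, its reduced characteristic polynomial is ${\rm chard}(\Pi)(T) = T^n - \pi$; specializing to $r = r^\circ$ gives
\[
{\rm char}\bigl(\iota(\Pi) \mid L\bigr)(T) \;=\; \varphi_0(T^n - \pi) \;=\; T^n - \varphi_0(\pi).
\]
Because $R$ is a $\kappa_{\tilde E}$-algebra and $\pi$ is a uniformizer of $F$, $\varphi_0(\pi) = 0$ in $R$; the polynomial reduces to $T^n$, of degree equal to $\rank_R L = n$. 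Cayley--Hamilton then forces $\iota(\Pi)^n = 0$, so $\iota(\pi) = \iota(\Pi^n) = 0$.

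\emph{Backward direction.} Assume $\pi L = 0$ and $(\mathbf{R}_r)$. Decompose $L = \bigoplus_{\tilde\psi} L_{\tilde\psi}$; by the rank condition only the summands with $\tilde\psi_{|F^{t}} = \psi_0$ are nonzero, each of rank~$1$. The twisted commutation $\Pi a = \tau(a)\Pi$ for $a \in \SuetO_{\tilde F^{t}}$ implies that $\iota(\Pi)$ sends $L_{\tilde\psi}$ into $L_{\tilde\psi \circ \tau^{-1}}$, and $\tau$ acts transitively on the $n$-element set $\{\tilde\psi : \tilde\psi_{|F^{t}} = \psi_0\}$; label its elements $\tilde\psi_0, \ldots, \tilde\psi_{n-1}$ cyclically so that $\iota(\Pi)$ carries $L_{\tilde\psi_j}$ into $L_{\tilde\psi_{j+1}}$. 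Working Zariski-locally on $\Spec R$, pick generators $e_j$ of $L_{\tilde\psi_j}$ with $\iota(\Pi)\,e_j = \lambda_j e_{j+1}$; then $\iota(\Pi)^n = \iota(\pi) = 0$ forces $\lambda_0 \lambda_1 \cdots \lambda_{n-1} = 0$. For $x = \sum_{i=0}^{n-1} \Pi^i x_i$ with $x_i \in \SuetO_{\tilde F}$, the matrix of $\iota(x)$ in the basis $(e_j)$ has $(k,j)$-entry $\tilde\psi_j(\bar x_{k-j}) \cdot \lambda_j \lambda_{j+1} \cdots \lambda_{k-1}$ (indices mod $n$). In the expansion of $\det \iota(x)$, the identity permutation contributes $\prod_j \tilde\psi_j(\bar x_0)$, while for every other permutation the accompanying $\lambda$-product is divisible by $\lambda_0\lambda_1 \cdots \lambda_{n-1} = 0$. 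The same expansion applied to $P_{\varphi_0} \otimes R$, on which $\Pi$ becomes strictly upper-triangular in the basis of~\eqref{Pphi1e} once $\varphi_0(\pi) = 0$, yields the identical formula for $\Nrd_{r^\circ}(x)$. Hence $\det_L = \Nrd_{r^\circ}$, i.e.\ $(\mathbf{K}_r)$ holds.

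\emph{Main obstacle.} The one nonstandard ingredient is the combinatorial claim that, for every non-identity permutation $\sigma$ of $\mathbb{Z}/n$, each $\lambda_k$ appears at least once in $\prod_j \lambda_j \lambda_{j+1} \cdots \lambda_{\sigma(j)-1}$. Writing $s_j = (\sigma(j) - j) \bmod n$ and letting $f_k$ denote the multiplicity of $\lambda_k$, a telescoping computation shows that $f_k - f_{k-1}$ equals the indicator of $s_k > 0$ minus the indicator of $\sigma^{-1}(k) \neq k$, which vanishes because $\sigma$ is a bijection. Thus $f_k$ is constant in $k$ and equals $n^{-1}\sum_j s_j$; this is a positive integer whenever $\sigma \neq \mathrm{id}$ (as $\sum_j s_j$ is then a positive multiple of $n$), forcing $f_k \geq 1$ for every $k$.
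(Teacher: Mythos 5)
Your proof is correct, but both halves take a genuinely different route from the paper. For the forward direction, the paper deduces $\pi L=0$ from the identity $\Nrd_r(a\pi)=0$ together with the decomposition of $\det_L$ into the rank-one factors $\chi_{\tilde\psi}$; your argument instead applies the characteristic-polynomial form of $(\mathbf{K}_r)$ (Remark \ref{kottchar}) to the single element $\Pi$, getting ${\rm char}(\iota(\Pi)\mid L)=T^n$ over a $\kappa_{\tilde E}$-algebra and concluding $\iota(\pi)=\iota(\Pi)^n=0$ by Cayley--Hamilton. This is cleaner and more self-contained, at the small cost of invoking the reduced characteristic polynomial ${\rm chard}(\Pi)(T)=T^n-\pi$. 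For the converse, the paper isolates the key computation as Lemma \ref{KC1l} and proves it by reducing to the universal \emph{reduced} ring and exploiting the characteristic-$p$ identity $(E+\rho)^{p^s}=E^{p^s}+\rho^{p^s}$ for the nilpotent part $\rho$; you instead carry out the Leibniz expansion of $\det\iota(x)$ in a local basis $(e_j)$ of the rank-one eigenspaces and show that every non-identity permutation contributes a monomial divisible by $\lambda_0\cdots\lambda_{n-1}=0$. Your telescoping argument for the multiplicities $f_k$ is correct (indeed $f_k$ is constant equal to $n^{-1}\sum_j s_j$, a positive multiple of $1$ when $\sigma\neq\mathrm{id}$), and the identification of the resulting diagonal term $\prod_j\tilde\psi_j(\bar x_0)$ with $\Nrd_{r^\circ}$ via \eqref{Pphi1e} reduced mod $\varphi_0(\pi)$ is the right comparison. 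What the paper's Frobenius trick buys is the avoidance of any combinatorics and a statement (Lemma \ref{KC1l}) reusable later (e.g.\ in the proof of Corollary \ref{K_rR_r}); what your expansion buys is an explicit closed formula for $\det_L$ and transparency about exactly where the hypothesis $\lambda_0\cdots\lambda_{n-1}=0$ (i.e.\ $\iota(\pi)=0$) enters. Two cosmetic points: the matrix identity should be asserted for arbitrary $\xi\in O_D\otimes_{\BZ_p}A$ with $A$ an $R$-algebra (writing $\xi=\sum_i(\Pi^i\otimes 1)\,a_i$ with $a_i\in O_{\tilde F}\otimes_{\BZ_p}A$), since $(\mathbf{K}_r)$ is an identity of polynomial functions; and one should note that the $O_{\tilde F}$-action on each rank-one $L_{\tilde\psi_j}$ factors through $\tilde\kappa$ precisely because $\pi$ (hence also $p$) kills $L$ --- both are routine.
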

\begin{proof} By Lemma \ref{KC0l} we already know that the Kottwitz condition
implies the rank conditions. If $a \in (\SuetO_D/p\SuetO_D) \otimes
R$ we have $\Nrd_r(a\pi) = 0$. Therefore the Kottwitz condition
implies $\det(\pi | L_{\tilde{\psi}}) = 0$. Since $L_{\tilde{\psi}}$
has rank $1$, this shows that $\pi$ annihilates $L$.  

Assume conversely that the ranks are as indicated and that $\pi$
annihilates $L$. Then we deduce the result from the following
Lemma. 
\end{proof}

\begin{lemma}\label{KC1l} 
Let $R$ be a ring, such that  $pR = 0$. Let $n$ be a
natural number. Let $L_1, \ldots, L_n$ be locally free $R$-modules of
rank $1$. Assume we are given a chain of homomorphisms $\Pi_i:  L_i
\rightarrow L_{i+1}$ and $\Pi_n: L_n \rightarrow L_1$ such that
\begin{displaymath}
   \Pi_n\circ \Pi_{n-1} \circ \ldots \circ \Pi_{1} = 0. 
\end{displaymath} 
We set $L = L_1  \bigoplus \ldots  \bigoplus L_n$, and  $\Pi = \Pi_1  \bigoplus
\ldots  \bigoplus \Pi_n$. This is an endomorphism of $L$ such that $\Pi^n
= 0$. 

Let $v \in M(n\times n, R)$ be a diagonal matrix. It induces an
endomorphism $v: L \rightarrow L$. We consider an endomorphism of $L$
of the form
\begin{displaymath}
   v_0 + v_1\Pi + v_2\Pi^2 + \ldots + v_{n-1}\Pi^{n-1},
\end{displaymath} 
where the $v_i$ are diagonal matrices.

Then
\begin{equation}\label{Kdet1e}
   \textstyle\det_F (v_0 + v_1\Pi + v_2\Pi^2 + \ldots + v_{n-1}\Pi^{n-1}) =
   \det_F v_0. 
\end{equation} 
\end{lemma}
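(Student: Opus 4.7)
The plan is to work Zariski-locally so that each $L_i$ is free of rank one, expand the resulting determinant as a sum over permutations, and show that every non-identity term vanishes because of the scalar consequence $\pi_1\pi_2\cdots\pi_n=0$ of the hypothesis $\Pi_n\circ\cdots\circ\Pi_1=0$.

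First I would choose a Zariski cover of $\Spec R$ on which each $L_i=R\cdot e_i$ is free, and fix such a basis. Each $\Pi_i$ then becomes multiplication by a scalar $\pi_i\in R$, and the composition hypothesis becomes the single relation $\pi_1\pi_2\cdots\pi_n=0$. Each diagonal $v_k$ acts on $L_i$ by a scalar $a_i^{(k)}\in R$. Using $\Pi^k(e_i)=\pi_i\pi_{i+1}\cdots\pi_{i+k-1}\,e_{i+k}$ (indices cyclic mod $n$), the matrix of $A:=v_0+v_1\Pi+\cdots+v_{n-1}\Pi^{n-1}$ in the basis $(e_1,\ldots,e_n)$ has entries
$$
A_{ji}\;=\;a_j^{(k)}\,\pi_i\pi_{i+1}\cdots\pi_{i+k-1},\qquad k:=(j-i)\bmod n\in\{0,\ldots,n-1\},
$$
with the empty product ($k=0$) equal to $1$. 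In particular $A_{ii}=a_i^{(0)}$ is the $i$-th diagonal entry of $v_0$.

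Next I would expand
$$
\det A\;=\;\sum_{\sigma\in S_n}\mathrm{sgn}(\sigma)\prod_{i=1}^n A_{\sigma(i),i}.
$$
The identity permutation contributes exactly $\prod_i a_i^{(0)}=\det v_0$, so it suffices to show that each term with $\sigma\neq\mathrm{id}$ vanishes. For such a $\sigma$ pick any non-trivial cycle $(i_1,\ldots,i_\ell)$ of $\sigma$; the $\ell$ corresponding factors $A_{i_{s+1},i_s}$ of $\prod_i A_{\sigma(i),i}$ together contain, as a subproduct, the product of the $\pi_l$ over the concatenation of the cyclic arcs $[i_s,i_{s+1}-1]$ for $s=1,\ldots,\ell$ (indices of the cycle taken mod $\ell$). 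These arcs are consecutive in $\mathbb{Z}/n$ (the endpoint of each is one less than the start of the next), and the composition closes up at $i_1$, so their union is all of $\mathbb{Z}/n$. Hence $\pi_1\pi_2\cdots\pi_n$ divides $\prod_i A_{\sigma(i),i}$, which is therefore zero.

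The only substantive step is the combinatorial observation that a single non-trivial cycle of $\sigma$ already produces cyclic arcs tiling $\mathbb{Z}/n$; this is elementary once one has set up the matrix entries correctly, and I do not expect any real obstacle. As a remark, the argument as sketched does not actually use the hypothesis $pR=0$: the identity is a purely formal consequence of $\pi_1\pi_2\cdots\pi_n=0$ together with commutativity of $R$.
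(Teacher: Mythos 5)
Your proof is correct, but it takes a genuinely different route from the paper's. You expand $\det A$ directly by the Leibniz formula and kill every non-identity term by a combinatorial observation: the matrix entry $A_{\sigma(i),i}$ carries the product of the $\pi_l$ over the cyclic arc from $i$ to $\sigma(i)-1$, and for a non-trivial cycle of $\sigma$ these arcs concatenate into a closed path of length a positive multiple of $n$, hence cover all of $\BZ/n$ and produce the factor $\pi_1\cdots\pi_n=0$. (The one point worth making explicit is that the total arc length is a \emph{positive} multiple of $n$ because each arc in a non-trivial cycle has length $\geq 1$, so the path really does wrap around at least once.) The paper instead reduces to the universal case $R=\mathbb{F}_p[x_{ij},y_k]/(y_1\cdots y_n)$, which is reduced, treats $v_0=E$ first via the characteristic-$p$ identity $(E+\rho)^{p^s}=E+\rho^{p^s}$ combined with $\rho^{p^s}=0$ (pushing the $\Pi$'s past the diagonal matrices), takes $p^s$-th roots of determinants in the reduced ring, and then handles general $v_0$ by localizing at the diagonal entries. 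Your approach buys elementarity and generality: it needs no reduction to a universal reduced ring, and, as you correctly observe, it never uses $pR=0$, so the identity holds over an arbitrary commutative ring; the paper's argument is shorter to write down but is genuinely tied to characteristic $p$ and to the reducedness of the universal ring.
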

\begin{proof} We may assume $L_i = R$. Then $\Pi_i$ is the
multiplication by some element $y_i \in R$. Our assumption says
\begin{displaymath}
   y_n y_{n-1} \cdot \ldots \cdot y_1 = 0. 
\end{displaymath}
One deduces that $\Pi^n = 0$. We note that $\Pi v = v' \Pi$, where
$v'$ is another diagonal matrix, which is obtained from $v$ by
cyclically permuting the diagonal entries. 

We may reduce to the universal
case where $R$ is the quotient of a polynomial ring $R =
\mathbb{F}_p[x_{ij}, y_k]/y_1\cdot \ldots \cdot y_n$, where $i \in
[0,n-1]$, $j \in [1,n]$, $k \in [1,n]$. For fixed $i$, the $x_{ij}$ are
the diagonal entries of $v_i$. In this case the ring $R$ is
reduced. 

Consider first the case where $v_0 = E$ is the unit matrix. We take
the universal case where the ring $R$ is reduced (as above but no
indeterminates $x_{0j}$). 
Then we have
two commuting operators $v_0$ and $\rho = v_1\Pi + v_2\Pi^2 + \ldots +
v_{n-1}\Pi^{n-1}$. Therefore
\begin{displaymath}
   (E + \rho)^{p^s} = E^{p^s} + \rho^{p^s}
\end{displaymath}
But $\rho^{p^s} = 0$ for $p^s > n$. This implies $\det (E + \rho)^{p^s}
= 1$. Because $R$ is reduced in the universal case,  this implies
(\ref{Kdet1e}) in the case where $v_0 = E$.  Clearly this shows also
the case where $v_0$ is an invertible diagonal matrix.

In the general case we consider the universal $R$ as above and its 
localization,
\begin{displaymath}
   R \subset R[x_{01}^{-1}, x_{02}^{-1}, \ldots, x_{0n}^{-1}] .
\end{displaymath}     
Over the bigger ring $v_0$ becomes invertible and therefore the
relation (\ref{Kdet1e}) holds. Hence it holds also over the subring
$R$. 
\end{proof}

Let us recall the Eisenstein conditions. We recall  the following notation: For an embedding $\psi: F^{t}
\rightarrow \bar{\mathbb{Q}}_p$ we set
\begin{equation*}
\begin{aligned}
   A_\psi &= \{\varphi: F \rightarrow \bar{\mathbb{Q}}_p \;|\;
   \varphi_{|F^{t}} = \psi, \; \text{and} \; r_{\varphi} = n  \}\\
B_{\psi} &= \{\varphi: F \rightarrow \bar{\mathbb{Q}}_p \;|\;
   \varphi_{|F^{t}} = \psi, \; \text{and} \; r_{\varphi} = 0  \}.
\end{aligned}
\end{equation*} 
We set $\psi_0 = {\varphi_0} _{|F^{t}}$. This gives a partition of the
set $\Phi_{\psi}$:
\begin{displaymath}
\Phi_{\psi_0} = A_{\psi_0} \cup B_{\psi_0} \cup \{ \varphi_0 \}, \quad
\Phi_{\psi} = A_{\psi} \cup B_{\psi} \, \text{for} \, \psi \neq \psi_0.
\end{displaymath}

Let $a_{\psi} = |A_{\psi}|$,
$b_{\psi} = |B_{\psi}|$. Then we have
\begin{displaymath}
   a_{\psi} + b_{\psi} + \epsilon_{\psi} = e = [F:F^{t}],
\end{displaymath}
where $\epsilon_{\psi} = 0$ if $\psi \neq \psi_0$, and
$\epsilon_{\psi_0} = 1$.   

We find (compare (\ref{KC4e})) that 
\begin{equation}\label{KC5e2}
r_{\psi} = 
\begin{cases}
\begin{array}{ll}
na_{\psi}+1 &  \text{if} \; \psi = \psi_0\\
na_{\psi}  & \text{if} \; \psi \neq \psi_0.
\end{array}
\end{cases}
\end{equation}

Let $Q(T) \in \SuetO_{F^t}[T]$ be the Eisenstein polynomial of a fixed 
prime element $\pi \in F$.   We set
$Q_{\psi}(T) = \psi(Q(T)) \in \SuetO_E[T]$. We set
\begin{displaymath}
   Q_{A_{\psi}}(T) = \prod_{\varphi \in A_{\psi}} (T - \varphi(\pi)), \quad
   Q_{B_{\psi}}(T) = \prod_{\varphi \in B_{\psi}} (T - \varphi(\pi)).
\end{displaymath}  
These are polynomials in $\SuetO_E[T]$.  Moreover we set $Q_0(T) =
T - \varphi_0(\pi)$. Then we have the decompositions
\begin{equation}\label{Eisenstein1e}
\begin{aligned}
   Q_{\psi} (T) &= Q_{A_{\psi}}(T) \cdot Q_{B_{\psi}}(T), \quad
   \text{for} \; \psi \neq \psi_0\\
  Q_{\psi_0} (T) &= Q_0(T) \cdot Q_{A_{\psi_0}}(T) \cdot Q_{B_{\psi_0}}(T).
\end{aligned}
\end{equation}

Let $R$ be an $\SuetO_E$-algebra. We will introduce the Eisenstein
condition on a $\SuetO_D$-module $(L,\iota)$ over $R$. 
We have
decompositions 
\begin{displaymath}
\SuetO_{F^{t}} \otimes_{\mathbb{Z}_p} \SuetO_E \cong \prod_{\psi \in
  \Psi} \SuetO_E,   
\qquad   \quad 
\SuetO_{F^{t}} \otimes_{\mathbb{Z}_p} R
\cong \prod_{\psi \in \Psi} R. 
\end{displaymath}

This gives a decomposition
\begin{displaymath}
L =  \bigoplus_{\psi \in \Psi} L_{\psi}.
\end{displaymath}

\begin{definition}\label{Eisen1d}
We say that $(L,\iota)$ {\it satisfies the Eisenstein condition
$(\mathbf{E}_r)$ with respect to $r$} if
\begin{equation}\label{EisensteinCe}
\begin{array}{rrrr}
((Q_0 \cdot Q_{A_{\psi_0}})(\iota(\pi)) | L_{\psi_0}) & = & 0, &\\[2mm]
\bigwedge^{n+1}(Q_{A_{\psi_0}}(\iota(\pi) | L_{\psi_0})) & = & 0, &\\[2mm]
(Q_{A_{\psi}}(\iota(\pi)) | L_{\psi}) & = & 0, & \text{for} \;
\psi \neq \psi_0.
\end{array}
\end{equation}
This definition applies to any $\mathcal{O}_{E}$-scheme $S$ and any $O_D$-module $(\CL, \iota)$ over $S$. 
\end{definition}
\begin{remarks}\label{eisrem}
 (i) The condition $(\mathbf{E}_r)$ only depends on the restriction to $O_F$ of the action $\iota$ of $O_D$ on $\CL$.  The condition depends on the choice of the uniformizer $\pi$. 

\smallskip

\noindent (ii) Let $r=r^\circ$, i.e., $r_{\varphi} = 0$ for $\varphi \neq
\varphi_0$.  In this case $E=F$. We have two actions of $\SuetO_F$ on $\mathcal{L}$. The
first is given by $\iota$ and the second by $\SuetO_{F}
\overset{\varphi_0}{\longrightarrow} \SuetO_E \rightarrow
\mathcal{O}_S$. By the Eisenstein condition these actions
coincide. Indeed, $\CL=\CL_{\psi_0}$, and 
$a_{\psi_0} = 0$ for all $\psi \in \Psi$. Therefore
the Eisenstein condition (\ref{EisensteinCe}) implies that $\iota(\pi)$ acts
on $\mathcal{L}$ as $\varphi_0(\pi)$. The conditions (\ref{KC5e}) imply
moreover that $\iota(a)$ for $a \in \SuetO_{F^{t}}$ acts via $\varphi_0(a)$ 
on $\mathcal{L}$.  

\smallskip

\noindent (iii) Let $F/\BQ_p$ be unramified, i.e., $F=F^t$. Then the first and the last Eisenstein conditions are empty. The second Eisenstein condition just says 
that $\rank \CL_{\varphi_0}\leq n$.  

\smallskip

\noindent (iv) Let us assume that $S$ is a $\kappa_E$-scheme. The images of the
three polynomials $Q_{A_{\psi}}(T)$, $Q_{B_{\psi}}(T)$ and $Q_0(T)$
in $\kappa_E[T]$ are respectively
\begin{displaymath}
   T^{a_{\psi}}, \quad T^{b_{\psi}}, \quad T.
\end{displaymath}
Therefore the Eisenstein conditions are in this case:
\begin{equation}\label{Eisenstein2e}
   \begin{aligned}
\iota(\pi)^{a_{\psi_0}+1} &= 0  \text{ on }  \CL_{\psi_0},\\
\wedge^{n+1}( \iota(\pi)^{a_{\psi_0}}) &= 0  \text{ on } 
\wedge^{n+1} \CL_{\psi_0},\\ 
\iota(\pi)^{a_{\psi}} &= 0   \text{ on }  \CL_{\psi}  \text{ for } 
\psi \neq \psi_0.
\end{aligned}
\end{equation}
\end{remarks}

\begin{definition}\label{Dr1d}
Let $(\mathcal{L}, \iota)$ be a $\SuetO_D$-module over an
$\SuetO_E$-scheme $S$. We say that {\it $(\mathcal{L}, \iota)$ satisfies
the Drinfeld condition $(\mathbf{D}_r)$ with respect to $r$} if it
satisfies the Eisenstein condition $(\mathbf{E}_r)$ and the rank
condition $(\mathbf{R}_r)$. 
\end{definition}
The rank condition makes sense by the remark after Definition
\ref{rkcond1d}.

\begin{lemma}\label{KrDrDrin}
Assume that $r=r^\circ$, i.e., $r_{\varphi} = 0$ for $\varphi \neq \varphi_0$. Let $S$ be
a $\kappa_E$-algebra. 

Then the condition $(\mathbf{D}_r)$ implies $(\mathbf{K}_r)$. 
\end{lemma}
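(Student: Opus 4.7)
\medskip
\noindent\textbf{Plan.} The strategy is to translate the hypotheses in the Drinfeld case $r=r^\circ$ into the simple form required by Proposition \ref{KC1p}, and then invoke that proposition after a faithfully flat base change. First, I would record the simplifications: since $r_\varphi=0$ for $\varphi\neq\varphi_0$, we have $A_\psi=\emptyset$ for every $\psi\in\Psi$, so $a_\psi=0$, while $r_{\psi_0}=1$ and $r_\psi=0$ for $\psi\neq\psi_0$. After the base change to $O_{\tilde E}$ used in Definition \ref{rkcond1d}, the rank condition $(\mathbf{R}_r)$ forces $L_{\tilde\psi}$ to be locally free of rank $1$ when $\tilde\psi_{|F^t}=\psi_0$ and to vanish otherwise, so that $\rank_R L_{\psi_0}=n$.

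Next, I would invoke Remark \ref{eisrem}(iv): over a $\kappa_E$-algebra the Eisenstein polynomials reduce to $Q_{A_\psi}(T)=T^{a_\psi}$ and $Q_0(T)=T$. With $a_\psi=0$ throughout, the three Eisenstein conditions collapse to $\iota(\pi)=0$ on $L_{\psi_0}$, $\bigwedge^{n+1}L_{\psi_0}=0$ (which is in any case automatic from $\rank L_{\psi_0}=n$), and $L_\psi=0$ for $\psi\neq\psi_0$ (since $\iota(\pi)^{a_\psi}=\mathrm{id}$ when $a_\psi=0$). Combining the first and third statements, $\iota(\pi)$ annihilates $L$ entirely.

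Finally, the Kottwitz condition $(\mathbf{K}_r)$ is the equality of two polynomial functions with coefficients in $R$, hence is detected by the faithfully flat base change $R\to R\otimes_{\kappa_E}\kappa_{\tilde E}$. Over the resulting $\kappa_{\tilde E}$-algebra, Proposition \ref{KC1p} characterizes $(\mathbf{K}_{r^\circ})$ precisely as the conjunction of the rank condition $(\mathbf{R}_r)$ with $\pi\cdot L=0$, and both of these persist under base change by the two previous paragraphs. The point that one must be careful about --- and the reason the argument cannot simply quote Lemma \ref{KC0l} --- is that the Kottwitz condition is a genuine identity of polynomial functions rather than a closed set-theoretic condition, so one cannot reduce to the reduced quotient $R_{\mathrm{red}}$; it is Proposition \ref{KC1p} that supplies the needed input in the possibly non-reduced case.
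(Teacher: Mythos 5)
Your proof is correct and is essentially the paper's own argument: the paper disposes of this lemma with the single line ``This is a reformulation of Proposition \ref{KC1p},'' and your write-up simply supplies the details of that reformulation --- namely that with $a_\psi=0$ the Eisenstein conditions of Remark \ref{eisrem}(iv) amount to $\iota(\pi)L=0$, that $(\mathbf{R}_r)$ gives the rank pattern required in Proposition \ref{KC1p}, and that $(\mathbf{K}_r)$ descends along the faithfully flat base change $R\to R\otimes_{\kappa_E}\kappa_{\tilde E}$. Your closing observation that Lemma \ref{KC0l} alone would not suffice over a non-reduced base, and that Proposition \ref{KC1p} (via Lemma \ref{KC1l}) is the essential input, is exactly the right point.
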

This is a reformulation of Proposition \ref{KC1p}.

\section{Formal $\SuetO_D$-modules}\label{s:formalmod}

\begin{definition}\label{ODmod1d} Let $S$ be an  $\SuetO_E$-scheme
  such that $p$ is nilpotent on $S$. 
A {\it $p$-divisible $\SuetO_D$-module over $S$} 
is a $p$-divisible group $X$ of height $[D:\mathbb{Q}_p] = n^2d$ with
an action
\begin{equation}\label{OD2e}
   \iota: \SuetO_D \rightarrow \End X.
\end{equation}
In this case $\Lie X$ is an $\SuetO_D$-module in the sense of
(\ref{ODmod1e}).  
We say that $X$ {\it satisfies $(\mathbf{K}_r)$ (\ref{KCond1e}), resp. $(\mathbf{R}_r)$ (\ref{KC5e}), resp. $(\mathbf{E}_r)$ (\ref{EisensteinCe}),
resp. $(\mathbf{D}_r)$} (Definition \ref{Dr1d}) if the
$\mathcal{O}_S$-module $\Lie X$ with the  $\SuetO_D$-action $d\iota$
does. 

A $p$-divisible $\SuetO_D$-module $X$ which satisfies $(\mathbf{D}_r)$
  is  called an {\it $r$-special formal
  $\SuetO_D$-module}. The name is justified because we will prove that
$X$ has no \'etale part. 
A formal $\SuetO_D$-module
which satisfies $(\mathbf{D}_{r^{\circ}})$ is also called a {\it
  special formal $\SuetO_D$-module}.  
\end{definition}
\begin{remarks}
(i) The definition of a special formal $\SuetO_D$-module above coincides with Drinfeld's definition in \cite{D}. Indeed, in this case $r=r^\circ$, and it follows from Remarks \ref{eisrem}, (ii) that $X$ is a strict formal $O_F$-module (the induced action of $O_F$ on $\Lie X$ is via the structure morphism $O_F=O_E\to\CO_S$). 

\smallskip

\noindent (ii) Let $A$ be a $p$-adic $\SuetO_E$-algebra. Let $X$ be a $p$-divisible
group over $A$ with an action (\ref{OD2e}). Then we define $\Lie X =
{\varprojlim} \Lie X \otimes A/p^{n}$. It is still a
$\SuetO_D$-module and the definition above makes sense.  
\end{remarks}
Assume that $p$ is locally nilpotent on $S$. 
We denote by $\mathbb{D}(X)$ the covariant crystal associated to
$X$. The evaluation $\mathbb{D}(X)_S$ at $S$ coincides with the
Lie algebra of the universal extension of $X$. 

Let $a \in \SuetO_D$. We will often write  simply $a$ when we mean
the action $\iota(a)$ on $\mathbb{D}(X)_S$, or the derived action
$d\iota(a)$ on $\Lie X$.

\begin{proposition}\label{DOD1p}
Assume that $p$ is locally nilpotent on $S$. Let $X$ be a 
$r$-special formal $\SuetO_D$-module of height $n^{2}d$.

Then $\mathbb{D}(X)_S$ is locally on $S$ a free $\SuetO_D
\otimes_{\mathbb{Z}_p} \mathcal{O}_S$-module of rank $1$.  
Moreover $X$ is a formal Lie group. 
\end{proposition}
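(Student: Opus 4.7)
The plan is to first establish that $\mathbb{D}(X)_S$ is locally free of rank $1$ over $O_D\otimes_{\BZ_p}\CO_S$; the formal Lie group property will then follow.

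Both $\mathbb{D}(X)_S$ and $O_D\otimes_{\BZ_p}\CO_S$ are locally free of rank $n^2d$ as $\CO_S$-modules (the former because $X$ has height $n^2d$, the latter because $[D:\BQ_p]=n^2d$). Consequently, to prove the desired rank~$1$ freeness over $O_D\otimes_{\BZ_p}\CO_S$ it suffices, by Nakayama's lemma, to produce at each point $s\in S$ a single element of $\mathbb{D}(X)_s$ whose $O_D\otimes k(s)$-orbit spans the fiber; the $k(s)$-dimension count then upgrades cyclicity to rank~$1$ freeness. After strict henselization I may assume that $k=k(s)$ is algebraically closed of characteristic~$p$.

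At such a geometric point I pass to the classical covariant Dieudonn\'e module $M=\mathbb{D}(X)_{W(k)}$, a free $W(k)$-module of rank $n^2d$ carrying an $O_D\otimes_{\BZ_p}W(k)$-action, with $\mathbb{D}(X)_s\simeq M/pM$. Rationally, $M[1/p]$ is a faithful module of $W(k)[1/p]$-dimension $n^2d$ over the central simple algebra $D\otimes_{\BQ_p}W(k)[1/p]$, hence must be isomorphic to the regular module, and so free of rank~$1$. To descend this to integral freeness of $M$ over $O_D\otimes_{\BZ_p}W(k)$, I invoke Lemma~\ref{uniqueframe} to reduce to a single explicit $\BX$, which I construct via display theory by taking $P=O_D\otimes_{\BZ_p}W(k)$ with a $V^{-1}$-structure obtained from the canonical Frobenius of $W(k)$ twisted by a power of $\Pi$ prescribed by~$r$, and checking via the decomposition~\eqref{unrdecomp} that the Kottwitz and Eisenstein conditions are satisfied. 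For a general $r$-special lattice in $M[1/p]$, the Eisenstein conditions pin down the action of $\iota(\pi)$ on each $\Lie\!_\psi X$; combined with the fact that $O_D$ is a maximal order in $D$ and that $W(k)/\BZ_p$ is unramified, this forces every such lattice to again be free of rank~$1$.

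Finally, given rank~$1$ freeness, the isocrystal $M[1/p]$ is a simple $D\otimes_{\BQ_p}W(k)[1/p]$-module, and its Newton slopes are determined by the cocharacter $\mu$ together with the Hasse invariant $1/n$ of $D$; an explicit computation shows they all lie strictly in $(0,1]$, so at every geometric point $\bar s$ the $p$-divisible group $X_{\bar s}$ is connected, whence $X$ is a formal Lie group. The principal obstacle is the integral rank~$1$ freeness of $M$ over the noncommutative order $O_D\otimes_{\BZ_p}W(k)$, where the Eisenstein conditions play the essential role of isolating exactly the lattices arising from $r$-special modules.
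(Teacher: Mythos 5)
Your overall strategy (reduce to a geometric point by Nakayama, then analyze the covariant Dieudonn\'e module $M$) is the same as the paper's, but both central steps of your analysis at the geometric point have genuine gaps. The rational claim is false as stated: for $k$ algebraically closed one has $D\otimes_{\BQ_p}W(k)[1/p]\cong\prod_{j=1}^{f}M_n(F_j)$, a product of $f$ matrix algebras, so a faithful module of total dimension $n^2d$ only has multiplicity $\geq 1$ on each factor and need not be the regular module whenever $f>1$ (multiplicities $(nf-f+1,1,\dots,1)$ also give a faithful module of the right dimension). What forces all multiplicities to equal $n$ is the $\sigma$-linear Verschiebung, which permutes the factors cyclically and equalizes the ranks of the eigenspaces $M_{\tilde\psi}$; this, combined with the rank condition $(\mathbf{R}_r)$ --- which you never invoke --- is the heart of the paper's proof, via the diagram \eqref{KC6e} and the computation $\length (M_{\tilde{\psi}}/\Pi M_{\tilde{\psi}\tau})=1$. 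Your integral step is then circular: Lemma \ref{uniqueframe} is deduced from Corollary \ref{uniupto}, hence from Theorem \ref{equivofcat}, hence from Proposition \ref{sfO2p}, which itself rests on the proposition you are proving; and in any case an isogeny only controls $M[1/p]$, not the lattice. Moreover $O_D\otimes_{\BZ_p}W(k)$ is \emph{not} a maximal order --- on each factor it is a hereditary order of period $n$ in $M_n(F_j)$ --- and lattices over it inside the regular module are generally not free: there are $n$ types of indecomposable lattices, and freeness requires each to occur with multiplicity one, which is again exactly what the length-one computation above (using $(\mathbf{R}_r)$) delivers.

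The formal-group step is also unsupported: the Newton polygon of the given $X$ is not determined a priori by $\mu$ and the invariant of $D$, and pinning it down would again require the uniqueness of the isogeny class, i.e., the circular Lemma \ref{uniqueframe}. The paper argues directly and elementarily: $V$ induces on each of the length-one cokernels of $\Pi\colon M_{\tilde{\psi}\tau}\to M_{\tilde{\psi}}$ either a bijection or zero, and if it were a bijection for every $\tilde{\psi}$ then $\Pi$ would act bijectively on $\Lie X$, contradicting the nilpotence of $\iota(\Pi)$ there. You should rework the proof around the Verschiebung/rank-condition argument; the representation-theoretic shortcuts do not close.
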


\begin{proof}
We begin with the case where $S = \Spec k$ where $k$ is a perfect
field with a $\kappa_{\tilde{E}}$-structure. Let $W(k)$ be the ring of
Witt vectors and $M =
\mathbb{D}(X)_{W(k)}$ be the covariant Dieudonn\'e module. 

We have a bijection of field embeddings
\begin{displaymath}
  \tilde{\Psi} = \Hom(\tilde{\kappa}, k) =\Hom(\tilde{F}^t,
  W(k)\otimes \mathbb{Q}).
\end{displaymath}
We regard $\tilde{\psi} \in \tilde{\Psi}$ as a homomorphism
$\tilde{\psi}: \SuetO_{\tilde{F}^{t}} \rightarrow W(k)$.  We set 
\begin{displaymath}
   M_{\tilde{\psi}} = \{m \in M \; | \; \iota(a)m = \tilde{\psi}(a)m, \;
   \text{for} \; a \in \SuetO_{\tilde{F}^t} \}\, .
\end{displaymath}
We have a direct decomposition
\begin{equation}\label{DC1e}
   M =  \bigoplus M_{\tilde{\psi}}, \quad \tilde{\psi} \in \tilde{\Psi}.
\end{equation}
We denote by $\sigma$ the Frobenius acting on $W(k)$. The Verschiebung
induces a map 
\begin{displaymath}
   V: M_{\sigma \tilde{\psi}} \rightarrow
M_{\tilde{\psi}}.
\end{displaymath}
Therefore the rank of the $W(k)$-module $M_{\tilde{\psi}}$ is
independent of $\tilde{\psi}$ and therefore equal to $ne$. For each
$\tilde{\psi}: \tilde{F}^t \rightarrow W(k) \otimes \mathbb{Q}$ we
have $\tilde{\psi} \tau = \sigma^{f} \tilde{\psi}$, where $\tau$ is
from (\ref{OD1e}). Since
$r_{\tilde{\psi}}$ depends only on the restriction of $\tilde{\psi}$
to $F^t$,  we find $r_{\tilde{\psi}} = r_{\tilde{\psi} \tau}$. Obviously
$\Pi$ induces a map 
\begin{displaymath}
   \Pi: M_{\tilde{\psi}\tau} \rightarrow M_{\tilde{\psi}}.
\end{displaymath}  
We consider the commutative diagram 
\begin{equation}\label{KC6e}
   \begin{CD}
M_{\sigma\tilde{\psi}\tau} @>{\Pi}>> M_{\sigma\tilde{\psi}}\\
@V{V}VV  @VV{V}V\\
M_{\tilde{\psi}\tau} @>{\Pi}>> M_{\tilde{\psi}}\, .
   \end{CD}
\end{equation}
By the rank condition, the cokernels of the vertical maps are
$k$-vector spaces of dimension $r_{\tilde{\psi}\tau} =
r_{\tilde{\psi}}$. Therefore the lengths of the cokernels of both
horizontal $\iota(\Pi)$ are also the same. On the other hand we have 
\begin{displaymath}
   \sum\nolimits_{\tilde{\psi}} \length M_{\tilde{\psi}}/
   \Pi M_{\tilde{\psi}\tau} = \length M/\Pi M = nf.
\end{displaymath}
The last equality holds since by assumption $\length M/pM = n^2d$. We
conclude that
\begin{displaymath}
   \length M_{\tilde{\psi}}/  \Pi M_{\tilde{\psi}\tau} = 1,
   \quad \text{for} \; \tilde{\psi } \in \tilde{\Psi}.
\end{displaymath}
The last equation tells us that $M/\Pi M$ is a free
$\SuetO_D/\Pi \SuetO_D \otimes_{\mathbb{F}_p} k$-module of rank
$1$. By  Nakayama's lemma  it follows that $M$ is a free
$\SuetO_D \otimes_{\mathbb{Z}_p} W(k)$-module. This completes the
proof of the first assertion in the  case $S = \Spec k$. 

To show that $X$ is a formal group we have to show that $V$ is
nilpotent on $M/\iota(\Pi)M$. We consider the map induced by $V$ on
the cokernels of the horizontal maps of the diagram (\ref{KC6e}). For
each $\tilde{\psi}$ this map is either a bijection or it is zero. It
suffices to see that this map is zero for some $\tilde{\psi}$. If not, 
we would have a bijection for each $\tilde{\psi}$. We denote the cokernels
of the vertical maps by $\Lie_{\tilde{\psi}} X$. Then 
\begin{displaymath}
   \Lie X =  \bigoplus_{\tilde{\psi}} \Lie_{\tilde{\psi}} X.
\end{displaymath}
We conclude that $\iota(\Pi):  \Lie_{\tilde{\psi} \tau} X \rightarrow
\Lie_{\tilde{\psi}} X$ is bijective for each $\tilde{\psi}$. This is a
contradiction since $\iota(\Pi)$ is nilpotent on $\Lie X$. Therefore
$X$ is a formal group. 

A base change argument shows that our result is true if $S$ is the
spectrum of a field. In the general case we consider a point $s \in
S$. We find $m \in \mathbb{D}(X)_S \otimes \kappa (s)$ which is a
basis of this $\SuetO_D \otimes \kappa(s)$-module. We may assume that $S =
\Spec R$ and that $m$ is the image of an element $n \in
\mathbb{D}(X)_R$. It follows from Nakayama's Lemma that the
homomorphism induced by $n$
\begin{displaymath}
   \SuetO_D \otimes R \rightarrow \mathbb{D}(X)_R
\end{displaymath} 
is an isomorphism in an open neighbourhood of $s$.
\end{proof}
We state the following consequence separately: 
\begin{corollary}
For each $\tilde{\psi} \in \tilde{\Psi}$ the cokernel of 
\begin{displaymath}
\Pi: \mathbb{D}(X)_{S, \tilde{\psi}\tau} \rightarrow
\mathbb{D}(X)_{S, \tilde{\psi}} 
\end{displaymath}
is a locally free $\mathcal{O}_S$-module of rank 1.\qed
\end{corollary}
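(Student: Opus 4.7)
The plan is to derive this directly from Proposition \ref{DOD1p}. First I will work Zariski-locally on $S$, using the proposition to fix an isomorphism $\BD(X)_S\simeq O_D\otimes_{\BZ_p}\CO_S$ of left $O_D\otimes_{\BZ_p}\CO_S$-modules. For the $\tilde\Psi$-decomposition (\ref{decompunr}) to be meaningful I may additionally pass to an fpqc cover endowing $\CO_S$ with an $O_{\tilde E}$-algebra structure; this is harmless since local freeness of a given rank is fpqc-local.

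Under this trivialisation, $\BD(X)_{S,\tilde\psi}$ is identified with the eigenspace $(O_D\otimes\CO_S)_{\tilde\psi}$ cut out by the idempotent for $\tilde\psi$ in the product decomposition $O_{\tilde F^t}\otimes\CO_S=\prod_{\tilde\psi}\CO_S$. From $O_D=\bigoplus_{i=0}^{n-1}\Pi^i O_{\tilde F}$ together with the commutation relation $\Pi a=\tau(a)\Pi$ for $a\in O_{\tilde F^t}$, the left $O_{\tilde F^t}$-action on the summand $\Pi^i O_{\tilde F}$ is the standard action twisted by $\tau^{-i}$; consequently, left multiplication by $\Pi$ shifts the $\tilde\Psi$-grading by $\tau^{-1}$ and hence carries $(O_D\otimes\CO_S)_{\tilde\psi\tau}$ into $(O_D\otimes\CO_S)_{\tilde\psi}$.

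The cokernel of the restricted map coincides with the $\tilde\psi$-component of the global cokernel $(O_D/\Pi O_D)\otimes_{\BZ_p}\CO_S$. Since
$\Pi O_D=\pi O_{\tilde F}\oplus\Pi O_{\tilde F}\oplus\cdots\oplus\Pi^{n-1}O_{\tilde F}$, one reads off $O_D/\Pi O_D\simeq O_{\tilde F}/\pi O_{\tilde F}=\tilde\kappa$, the common residue field of $\tilde F$ and $\tilde F^t$. Taking the $\tilde\psi$-component of $\tilde\kappa\otimes_{\BZ_p}\CO_S$ yields the asserted locally free $\CO_S$-module of rank $1$, concluding the proof. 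There is no substantial obstacle; the sole bookkeeping point is the shift of $\tilde\Psi$-labels under $\Pi$, which is immediate from $\Pi a=\tau(a)\Pi$.
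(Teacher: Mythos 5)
Your reduction to the standard module $O_D\otimes_{\BZ_p}\CO_S$ via Proposition \ref{DOD1p}, the bookkeeping of the shift $\tilde\psi\tau\mapsto\tilde\psi$ under left multiplication by $\Pi$, and the identification $O_D/\Pi O_D\simeq\tilde\kappa$ are all correct, and this is surely the derivation the authors intend (they give none). The gap is in the final sentence. Since $\tilde\kappa=O_D/\Pi O_D$ is killed by $p$, and as a left $O_{\tilde F^t}$-module it is $O_{\tilde F^t}/pO_{\tilde F^t}$, the global cokernel is
\begin{displaymath}
(O_D/\Pi O_D)\otimes_{\BZ_p}\CO_S\;\simeq\;(O_{\tilde F^t}\otimes_{\BZ_p}\CO_S)/p\;\simeq\;\prod\nolimits_{\tilde\psi}\,\CO_S/p\CO_S ,
\end{displaymath}
so each $\tilde\psi$-isotypic piece is $\CO_S/p\CO_S$, not $\CO_S$. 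Your own computation therefore shows that the cokernel is locally free of rank $1$ over $\CO_S$ precisely when $p\CO_S=0$; over a base such as $O_{\tilde E}/p^2$ the module $\CO_S/p\CO_S$ is killed by $p$ and is not locally free of any rank, so the step ``yields the asserted locally free $\CO_S$-module of rank $1$'' fails as written.

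The repair is to make explicit the hypothesis that $S$ is a $\kappa_{\tilde E}$-scheme (equivalently $p\CO_S=0$), under which $\tilde\kappa\otimes_{\BZ_p}\CO_S=\tilde\kappa\otimes_{\BF_p}\CO_S\simeq\prod_{\tilde\psi}\CO_S$ and your argument closes. This is in fact the only setting in which the corollary is used: the display-theoretic analysis in Section \ref{s:formalmod} takes place over $\kappa_E$-algebras, and the analogous rank statement for the powers $\pi^i$ a little later in that section is explicitly made under the assumption that $S$ is a $\kappa_{\tilde E}$-scheme. So either add that hypothesis, or replace your last sentence by the correct conclusion that the cokernel is locally $\CO_S/p\CO_S$ and record when that is $\CO_S$. (A minor point: the preliminary fpqc reduction is unnecessary, since the summands $\BD(X)_{S,\tilde\psi}$ are only defined once $S$ already carries an $O_{\tilde E}$-structure; but it does no harm.)
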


\begin{proposition}\label{flatimplies}
Let $A$ be an $\SuetO_{E}$-algebra. Assume moreover that $A$ is a
$p$-adic integral domain, with fraction field  of characteristic
$0$. Let $X$ be an $r$-special formal $\SuetO_D$-module over
$A$.

Then  $\Lie X$ satisfies  $(\mathbf{K}_r)$.
\end{proposition}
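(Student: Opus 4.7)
The plan is to verify the Kottwitz condition by reduction to the generic fiber, where a characteristic-$0$ criterion already established in Lemma \ref{KC0l} can be applied directly.

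First, I would observe that the Kottwitz condition $(\mathbf{K}_r)$ is the equality of two polynomial functions $\det_{\Lie X} = (\Nrd_r)_A$ on $\mathbb{V}(\SuetO_D)_A$. Since $\Lie X$ is a finite locally free $A$-module (a standard property of $p$-divisible groups; in fact, by Proposition \ref{DOD1p} applied over $A/p^n$ and taking the inverse limit, $\mathbb{D}(X)_A$ is itself locally free of rank one over $O_D \otimes_{\BZ_p} A$, so $\Lie X$ is locally free over $A$), after choosing a local basis both sides are represented by elements of $A[T_1, \ldots, T_N]$, where the $T_i$ are coordinates on $\mathbb{V}(\SuetO_D)$.

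Second, since $A$ is an integral domain, let $K = \Frac(A)$. The inclusion $A \hookrightarrow K$ induces an injection $A[T_1, \ldots, T_N] \hookrightarrow K[T_1, \ldots, T_N]$. Hence to verify that $\det_{\Lie X} = (\Nrd_r)_A$ in $A[T_1, \ldots, T_N]$ it is enough to verify the identity after base change to $K$. By hypothesis, $X$ is $r$-special, so it satisfies $(\mathbf{D}_r)$ and in particular the rank condition $(\mathbf{R}_r)$. The rank condition \eqref{KC5e} is manifestly preserved under arbitrary base change, so $X_K := X \otimes_A K$ satisfies $(\mathbf{R}_r)$ over $K$.

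Finally, $K$ is a characteristic-$0$ $\SuetO_E$-algebra, so the characteristic-$0$ case of Lemma \ref{KC0l} gives the equivalence $(\mathbf{R}_r) \Leftrightarrow (\mathbf{K}_r)$ over $K$. Hence $X_K$ satisfies $(\mathbf{K}_r)$, and by the descent step above the Kottwitz condition holds already over $A$. There is no real obstacle; the essentially nontrivial input (the implication $(\mathbf{R}_r) \Rightarrow (\mathbf{K}_r)$ in characteristic $0$, which ultimately rests on the semisimplicity of $D \otimes K$-modules and the fact that the characteristic polynomial of a semisimple module is determined by its isotypic multiplicities) has already been recorded in Lemma \ref{KC0l}; everything else in this proposition is a formal reduction using that $A$ embeds in its fraction field.
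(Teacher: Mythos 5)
Your overall strategy --- pass to the fraction field $K$, verify the identity of polynomial functions there, and descend via the injection $A[T_1,\dots,T_N]\hookrightarrow K[T_1,\dots,T_N]$ --- is exactly the reduction the paper performs (the paper phrases the descent as ``$(\mathbf{K}_r)$ is a closed condition,'' which for an integral domain amounts to the same thing). The gap is in the step over $K$. Lemma \ref{KC0l} asserts the equivalence $(\mathbf{K}_r)\Leftrightarrow(\mathbf{R}_r)$ only for \emph{reduced $\kappa_E$-algebras}; for an arbitrary $\SuetO_E$-algebra, and in particular in characteristic $0$, it asserts only the implication $(\mathbf{K}_r)\Rightarrow(\mathbf{R}_r)$, which is the direction you do not need. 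The converse implication you invoke is in fact false over $K$ when $F/\BQ_p$ is ramified: writing $L\otimes_K\bar\BQ_p\cong\bigoplus_\varphi P_\varphi^{m_\varphi}$ as a $D\otimes_{\BQ_p}\bar\BQ_p$-module, the Kottwitz condition says $m_\varphi=r_\varphi$ for every $\varphi\in\Phi$, whereas the rank condition only records $\rank L_{\tilde\psi}=\sum_{\varphi|\psi}m_\varphi$, i.e., it constrains the sums of the multiplicities over each fiber of $\Phi\to\Psi$ and not the individual $m_\varphi$. Your closing parenthetical (that the characteristic polynomial of a semisimple module is determined by its isotypic multiplicities) is correct, but the rank condition simply does not determine those multiplicities once $e>1$.

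What closes the gap --- and what the paper's proof actually does --- is to bring in the Eisenstein conditions on the generic fiber. By Proposition \ref{DOD1p}, $H=\mathbb{D}(X)_K$ is free of rank one over $\SuetO_D\otimes K$, so $\iota(\pi)$ acts semisimply on each $H_\psi$ with eigenvalues $\varphi(\pi)$ for $\varphi|\psi$, each of multiplicity $n^2$. The relation $Q_{A_\psi}(\iota(\pi)\,|\,L_\psi)=0$ then forces the eigenvalues of $\iota(\pi)$ on the quotient $L_\psi$ to lie in $\{\varphi(\pi):\varphi\in A_\psi\}$, each with multiplicity at most $n^2$; for $\psi=\psi_0$ the second Eisenstein condition additionally bounds the multiplicity of $\varphi_0(\pi)$ by $n$. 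Comparing with $\rank L_\psi$ as given by $(\mathbf{R}_r)$ pins every multiplicity down exactly, so that $\varphi(\pi)$ occurs with multiplicity $r_\varphi n$ for every $\varphi$, which yields $(\mathbf{K}_r)$ over $K$. Without this use of $(\mathbf{E}_r)$ your argument does not go through.
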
     
\begin{proof} Let $K$ denote the fraction field of $A$. By Proposition \ref{DOD1p},  $H = \mathbb{D}(X)_K$ is a free
$\SuetO_D \otimes K$-module of rank $1$. We consider the decomposition
\begin{displaymath}
H =  \bigoplus_{\psi \in \Psi} H_{\psi}.
\end{displaymath}
We can assume that $A$ is a $\SuetO_{\tilde{E}}$-algebra. 
Let $\varphi \in \Phi$ be an extension of $\psi$. We write $\varphi | \psi$.
Then $\pi$ acts semisimply on $H_{\psi}$ and has eigenvalues
$\varphi(\pi)$ for $\varphi | \psi$, each  with multiplicity $n^{2}$. 
Now $L = (\Lie X)_K$ is a quotient of $H$. It has the decomposition $L =
 \bigoplus L_{\psi}$. Assume first that $\psi \neq \psi_0$. By the
Eisenstein condition,  $\pi$ has on $L_{\psi}$ at most the eigenvalues
$\varphi(\pi)$ with $\varphi \in A_{\psi}$. The multiplicity of the
eigenvalues is at most  $n^{2}$. But since by (\ref{KC5e})
$\rank L_{\psi} = n^{2}a_{\psi}$, each of these eigenvalues must have
exactly multiplicity $n^2$. We assume now that $\psi = \psi_0$. Then
again the eigenspaces of $\varphi(\pi)$ have dimension $\leq n^{2}$. 
But the second of the Eisenstein conditions says that for $\varphi =
\varphi_0$ this multiplicity is $\leq n$. Since $\rank L_{\psi_{0}} =
a_{\psi}n^{2} + n$, this implies that the multiplicity of $\varphi(\pi)$
is $n^{2}$ for $\varphi \in A_{\psi_0}$ and is $n$ for $\varphi =
\varphi_0$. Altogether the multiplicity of the eigenvalue
$\varphi(\pi)$ of $\pi$ acting on $L$ is $r_{\varphi}n$. This implies
the Kottwitz condition on $L_K$. Since $({\bf K}_r)$ is a closed condition, the assertion follows.   
\end{proof}

We will now assume that $S$ is a $\kappa_{\tilde{E}}$-scheme. The
$\SuetO_D$-module $\SuetO_D \otimes \mathcal{O}_S$ defines a
polynomial function 
\begin{displaymath}
\rho: \mathbb{V}(\SuetO_D)_S \rightarrow \mathbb{A}^{1}_S. 
\end{displaymath} 
The module $\SuetO_D \otimes \mathcal{O}_S$ has a composition series
with factors 
\begin{displaymath}
\SuetO_D/\Pi \SuetO_D \otimes_{\tilde{\kappa}, \tilde{\psi}} \mathcal{O}_S ,
\end{displaymath}
where $\tilde{\psi} \in \tilde{\Psi}$.
This last module defines the polynomial function
$\chi_{\tilde{\psi}}$ from (\ref{KCchi1e}). One deduces easily that
the determinant of the module $\SuetO_D \otimes \mathcal{O}_S$ is 
\begin{displaymath}
\rho = \prod_{\tilde{\psi} \in \tilde{\Psi}} \chi_{\tilde{\psi}}^{ne} =
\prod_{\varphi \in \Phi} \Nrd_{\varphi}^{n}. 
\end{displaymath}
Analogously to (\ref{DC1e}) we have a decomposition
\begin{displaymath}
   \mathbb{D}(X)_{S} = \bigoplus_{\tilde{\psi} \in \tilde{\Psi}}
    \mathbb{D}(X)_{S,\tilde{\psi}},
\end{displaymath}
where all summands are locally free $\mathcal{O}_S$-modules of rank
$ne$. We set for $\psi \in \Psi$
\begin{displaymath}
   \mathbb{D}(X)_{S,\psi} = \bigoplus_{\tilde{\psi} \in \tilde{\Psi}}
   \mathbb{D}(X)_{S,\tilde{\psi}}, 
\end{displaymath}
where the sum runs over all $\tilde{\psi}$ such that
$\tilde{\psi}_{|F^{t}} = \psi$. 
It follows from Proposition \ref{DOD1p} that we have locally free
$\mathcal{O}_S$-modules with ranks
\begin{displaymath}
   \rank
   \mathbb{D}(X)_{S,\tilde{\psi}}/\pi^i\mathbb{D}(X)_{S,\tilde{\psi}}
   = ni, \quad \text{for} \; 0 \leq i \leq e. 
\end{displaymath}
We note that the $\mathcal{O}_S$-modules
$\mathbb{D}(X)_{S,\tilde{\psi}}$ are defined for each
$\SuetO_{\tilde E}$-scheme with $p$ locally nilpotent. 
\begin{proposition}\label{sfO2p}
Let $(X,\iota)$ be a $p$-divisible $\SuetO_D$-module over a
$\kappa_E$-scheme $S$.  There are natural surjective maps
\begin{equation}\label{KE1e}
   \mathbb{D}(X)_{S,\psi} \rightarrow \Lie_{\psi} X, \quad \text{for} \;
   \psi \in \Psi.
\end{equation}

\smallskip

\noindent (i) Assume that $(X,\iota)$ is $r$-special. 
Then the maps \eqref{KE1e} induce isomorphisms 

\begin{equation}\label{KE2e}
\begin{aligned}
\mathbb{D}(X)_{S,\psi}/\pi^{a_{\psi}}\mathbb{D}(X)_{S, \psi}
&\rightarrow \Lie_{\psi} X,  \text{ for }  \psi \neq \psi_0\\[1mm]
\mathbb{D}(X)_{S,\psi_0}/\pi^{a_{\psi_0}}\mathbb{D}(X)_{S, \psi_0}
&\rightarrow \Lie_{\psi_0} X / \pi^{a_{\psi_0}} \Lie_{\psi_0} X.  
\end{aligned}
\end{equation}

In particular, the cokernel of any power of $\pi$ on $\Lie X$ is a
locally free $\mathcal{O}_S$-module.

\smallskip

\noindent (ii) Conversely, assume that the following conditions on
$(X,\iota)$ are satisfied. 
 
\begin{enumerate}
\item $\Lie X$ satisfies the rank condition $(\mathbf{R}_r)$.
\item The natural map $\mathbb{D}(X)_S \rightarrow
\Lie X$ induces isomorphisms (\ref{KE2e})
\item $\Lie_{\psi_0}$ is annihilated by $\pi^{a_{\psi_0}+1}$.
\end{enumerate}
Then $X$ is $r$-special.
\end{proposition}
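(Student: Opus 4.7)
The plan is to exploit the fact, established in Proposition~\ref{DOD1p}, that $\mathbb{D}(X)_S$ is locally a free $\SuetO_D \otimes_{\mathbb{Z}_p} \mathcal{O}_S$-module of rank $1$. As recorded just before the statement, this pins down $\mathbb{D}(X)_{S,\psi}/\pi^{i}\mathbb{D}(X)_{S,\psi}$ as a locally free $\mathcal{O}_S$-module of rank $n^{2} i$ for $0 \leq i \leq e$. The natural map $\mathbb{D}(X)_S \to \Lie X$ from the universal extension is $\SuetO_D$-equivariant, and through the $\SuetO_{F^{t}}\otimes\mathcal{O}_S$-action it decomposes into the required surjections $\mathbb{D}(X)_{S,\psi} \twoheadrightarrow \Lie_\psi X$ for $\psi \in \Psi$.

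For part (i) with $\psi \neq \psi_0$, the Eisenstein condition in its reduced form \eqref{Eisenstein2e} says $\iota(\pi)^{a_\psi}=0$ on $\Lie_\psi X$, so the surjection factors through $\mathbb{D}(X)_{S,\psi}/\pi^{a_\psi}\mathbb{D}(X)_{S,\psi}$. Both source and target are locally free $\mathcal{O}_S$-modules of rank $n^{2} a_\psi$ (source by the ranks above; target by the rank condition $(\mathbf{R}_r)$ summed over the $n$ embeddings $\tilde\psi$ above $\psi$). A surjection between locally free modules of the same rank is an isomorphism.

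The hard case is $\psi = \psi_0$, where one must work modulo $\pi^{a_{\psi_0}}$ on both sides. The first Eisenstein condition $\iota(\pi)^{a_{\psi_0}+1} = 0$ on $\Lie_{\psi_0} X$ yields a surjection
\begin{equation*}
\mathbb{D}(X)_{S,\psi_0}/\pi^{a_{\psi_0}}\mathbb{D}(X)_{S,\psi_0} \;\twoheadrightarrow\; \Lie_{\psi_0} X/\pi^{a_{\psi_0}} \Lie_{\psi_0} X,
\end{equation*}
whose source is locally free of rank $n^{2} a_{\psi_0}$. Setting $r_s := \dim_{\kappa(s)} \iota(\pi)^{a_{\psi_0}}(\Lie_{\psi_0} X \otimes \kappa(s))$, the target has fiber dimension $(n^{2} a_{\psi_0} + n) - r_s$ at $s$. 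The second Eisenstein condition $\bigwedge^{n+1}\iota(\pi)^{a_{\psi_0}} = 0$ gives $r_s \leq n$, while the existence of the surjection forces $r_s \geq n$. Hence $r_s = n$ uniformly, the two fiber dimensions match, and the surjection is an isomorphism. The assertion on cokernels of arbitrary powers of $\pi$ on $\Lie X$ follows formally from \eqref{KE2e}. This pincer between the Eisenstein bound and the surjection bound is what I expect to be the main obstacle.

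For part (ii), hypotheses (1) and (3) supply the rank condition $(\mathbf{R}_r)$ and the first Eisenstein condition, respectively. The third Eisenstein condition (for $\psi \neq \psi_0$) is immediate from the isomorphism in hypothesis~(2): since $\pi^{a_\psi}\mathbb{D}(X)_{S,\psi}$ lies in the kernel of $\mathbb{D}(X)_{S,\psi} \to \Lie_\psi X$, the operator $\iota(\pi)^{a_\psi}$ kills $\Lie_\psi X$. Finally, the isomorphism $\mathbb{D}(X)_{S,\psi_0}/\pi^{a_{\psi_0}}\mathbb{D}(X)_{S,\psi_0} \cong \Lie_{\psi_0} X / \pi^{a_{\psi_0}}\Lie_{\psi_0} X$ in hypothesis~(2), combined with $\rank \Lie_{\psi_0} X = n^{2} a_{\psi_0} + n$ from hypothesis~(1), forces $\pi^{a_{\psi_0}}\Lie_{\psi_0} X$ to have rank exactly $n$; hence the image of $\iota(\pi)^{a_{\psi_0}}$ has rank $\leq n$, so its $(n+1)$-st exterior power vanishes, yielding the second Eisenstein condition.
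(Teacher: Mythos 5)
Your treatment of the surjections \eqref{KE1e}, of the case $\psi\neq\psi_0$ in (i), and of part (ii) matches the paper's argument. The gap is in the case $\psi=\psi_0$ of part (i). You correctly run the ``pincer'' on fibers ($r_s\le n$ from $\bigwedge^{n+1}\iota(\pi)^{a_{\psi_0}}=0$, $r_s\ge n$ from surjectivity, hence every fiber of $\Lie_{\psi_0}X/\pi^{a_{\psi_0}}\Lie_{\psi_0}X$ has dimension $n^2a_{\psi_0}$), but the final step --- ``the two fiber dimensions match, and the surjection is an isomorphism'' --- is false over a non-reduced base. A surjection from a locally free module of rank $m$ onto a module all of whose fibers have dimension $m$ need not be injective: take $R=k[\epsilon]/(\epsilon^2)$ and $R\twoheadrightarrow R/\epsilon R$; the target has fiber dimension $1=\operatorname{rank}R$ but the map has kernel $\epsilon R$. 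What is missing is the \emph{local freeness} of the target $\Lie_{\psi_0}X/\pi^{a_{\psi_0}}\Lie_{\psi_0}X$; once that is known, a surjection between locally free modules of equal rank is indeed an isomorphism (as you use, correctly, for $\psi\neq\psi_0$). Constant fiber dimension implies local freeness only for reduced $S$, and the proposition must hold for arbitrary $\kappa_E$-schemes --- indeed it is applied later to divided power thickenings in the proof of Theorem \ref{equivofcat}, where non-reduced rings are essential.

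The paper closes exactly this gap with Lemma \ref{Eisen1l}: for a local ring $R$, a free module $L$ of rank $m$ and an endomorphism $f$ with $r=\dim_k(L/f(L))\otimes k$, $s=m-r$ and $\bigwedge^{s+1}f=0$, the cokernel $L/f(L)$ is free of rank $r$. Applied with $L=\Lie_{\psi_0}X$, $f=\iota(\pi)^{a_{\psi_0}}$, $m=a_{\psi_0}n^2+n$, $r=a_{\psi_0}n^2$, $s=n$ (the identification $r=\dim_k(L/f(L))\otimes k$ being precisely your fiber computation $r_s=n$), this lemma produces the local freeness of the target; its proof is a genuine basis manipulation using the exterior-power hypothesis and is not a formality. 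So your outline is correct for reduced $S$ but needs this additional lemma (or an equivalent argument) to cover the general case.
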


We will prove this together with the following Corollary:

\begin{corollary}\label{K_rR_r}
An $r$-special formal $\SuetO_D$-module $X$ over a
$\kappa_E$-scheme $S$ satisfies the Kottwitz condition $(\mathbf{K}_r)$.
\end{corollary}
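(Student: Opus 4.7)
I treat the Corollary together with Proposition \ref{sfO2p}: first establish part (i) of the Proposition, then use the resulting explicit local presentation of $\Lie X$ to match $\det_{\Lie X}$ with $(\Nrd_r)_S$.

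\emph{Step 1 (Proposition \ref{sfO2p}(i)).} Since $S$ is a $\kappa_E$-scheme, Remark \ref{eisrem}(iv) reduces the Eisenstein condition to $\iota(\pi)^{a_\psi} = 0$ on $\Lie_\psi X$ for $\psi \neq \psi_0$, and to $\iota(\pi)^{a_{\psi_0}+1} = 0$ on $\Lie_{\psi_0} X$ together with $\bigwedge^{n+1}(\iota(\pi)^{a_{\psi_0}}\mid \Lie_{\psi_0} X) = 0$. By Proposition \ref{DOD1p}, the sheaf $\mathbb{D}(X)_{S,\psi}/\pi^{i}\mathbb{D}(X)_{S,\psi}$ is locally free of $\CO_S$-rank $n^2 i$. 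For $\psi \neq \psi_0$ the surjection $\mathbb{D}(X)_{S,\psi} \twoheadrightarrow \Lie_\psi X$ factors through $\mathbb{D}(X)_{S,\psi}/\pi^{a_\psi}\mathbb{D}(X)_{S,\psi}$; both source and target of the induced surjection are locally free of rank $n^2 a_\psi$ (the target via the rank condition $(\mathbf{R}_r)$ coming from $r$-speciality), so the map is an isomorphism. For $\psi_0$, the image $\iota(\pi)^{a_{\psi_0}}\Lie_{\psi_0} X$ is killed by $\pi$ and has fiber rank $\leq n$ (by the wedge condition), so $\Lie_{\psi_0} X/\iota(\pi)^{a_{\psi_0}}\Lie_{\psi_0} X$ has fiber dimension $\geq (n^2 a_{\psi_0} + n) - n = n^2 a_{\psi_0}$ everywhere. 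The induced surjection from the rank-$n^2 a_{\psi_0}$ source $\mathbb{D}(X)_{S,\psi_0}/\pi^{a_{\psi_0}}\mathbb{D}(X)_{S,\psi_0}$ therefore has matching fiber dimensions, and its kernel vanishes by Nakayama.

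\emph{Step 2 (the Corollary).} Trivializing $\mathbb{D}(X)_S \cong \SuetO_D \otimes_{\BZ_p} \CO_S$ locally via Proposition \ref{DOD1p}, Step 1 presents $\Lie X$ locally as the base change along $\SuetO_E \to \CO_S$ of a fixed $\SuetO_D$-module $L^{\mathrm{univ}}$ over $\SuetO_E$: namely $L^{\mathrm{univ}}_\psi = (\SuetO_D \otimes_{\SuetO_{F^t},\psi} \SuetO_E)/\pi^{a_\psi}$ for $\psi \neq \psi_0$, together with the corresponding extension (with rank-$n$ kernel annihilated by $\pi$) defining $L^{\mathrm{univ}}_{\psi_0}$. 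Hence the polynomial function $\det_{\Lie X}$ is the pullback of $\det_{L^{\mathrm{univ}}}$. To identify $\det_{L^{\mathrm{univ}}}$ with $\Nrd_r$ it suffices (by torsion-freeness of $\SuetO_{\tilde E}$) to check equality on the generic fiber $\tilde E$: there $\pi$ acts semisimply on $L^{\mathrm{univ}} \otimes \tilde E$, and a rank count in each $\varphi$-eigenspace (using that the $\psi$-piece has $\SuetO_E$-rank $n^2 a_\psi + n\epsilon_\psi$, distributed among the $\varphi | \psi$ with $r_\varphi \neq 0$ with multiplicities $n r_\varphi$) reproduces exactly the eigenvalue multiplicities of $\Nrd_r$ computed from $P_r = \bigoplus_\varphi P_\varphi^{r_\varphi}$, as in the argument of Proposition \ref{flatimplies}. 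Pulling back to $\CO_S$ yields the Kottwitz condition $\det_{\Lie X} = (\Nrd_r)_S$.

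\emph{Main obstacle.} The $\psi = \psi_0$ case is the delicate part in both steps: in Step 1, $\Lie_{\psi_0} X$ is not a simple cyclic quotient $\mathbb{D}(X)_{S,\psi_0}/\pi^{k}$, forcing use of the wedge condition to bound the image rank of $\iota(\pi)^{a_{\psi_0}}$; in Step 2, the extension structure of $L^{\mathrm{univ}}_{\psi_0}$ requires the determinant to be assembled across a short exact sequence, and the extra rank-$n$ piece must be matched with the $\varphi_0$-factor of $\Nrd_r$ (contributing multiplicity $n = r_{\varphi_0} n$). A subtler point is that the classification of multiplicative polynomial functions preceding Lemma \ref{KC0l} only applies over reduced bases, so the identity $\det_{\Lie X} = (\Nrd_r)_S$ over an arbitrary $\kappa_E$-scheme $S$ must be obtained by pullback from a universal model defined over $\SuetO_E$, rather than checked fiberwise on $S$.
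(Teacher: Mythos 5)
Your overall strategy is the paper's: prove Proposition \ref{sfO2p}(i) first and then read off the Kottwitz condition from the resulting presentation of $\Lie X$. But there are two genuine gaps, both at exactly the points you flag as delicate.

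First, in Step 1 the conclusion ``its kernel vanishes by Nakayama'' does not follow from matching fiber dimensions. A surjection from a locally free module of rank $m$ onto a module all of whose fibers have dimension $m$ need not be injective over a non-reduced base: take $R=k[\epsilon]/(\epsilon^2)$ and $R\twoheadrightarrow R/\epsilon R$. Matching fiber dimensions only gives that the kernel lies in $\mathfrak{m}_xP$ for every point $x$. Your fiberwise rank bound from the wedge condition therefore settles the reduced case only; to handle general $S$ one must use $\bigwedge^{n+1}\big(\iota(\pi)^{a_{\psi_0}}\big)=0$ as a module-theoretic identity, which is what the paper's Lemma \ref{Eisen1l} does (it produces an adapted basis and shows $\Lie_{\psi_0}X/\pi^{a_{\psi_0}}\Lie_{\psi_0}X$ is genuinely locally free of rank $n^2a_{\psi_0}$).

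Second, in Step 2 there is no fixed $\SuetO_D$-module $L^{\mathrm{univ}}$ over $\SuetO_E$ whose base change is $\Lie X$. The quotients $\BD(X)_{S,\psi}/\pi^{a_\psi}$ are indeed pullbacks of the flat $\SuetO_{\tilde E}$-modules $(\SuetO_D\otimes_{\SuetO_{F^t},\psi}\SuetO_{\tilde E})/Q_{A_\psi}(\pi)$, and for those your generic-fiber/torsion-freeness argument is a legitimate (slightly different) route. But the rank-$n$ submodule $\pi^{a_{\psi_0}}\Lie_{\psi_0}X$ is precisely the varying part of the moduli problem: it is an arbitrary $\SuetO_D$-stable rank-$n$ quotient of the rank-$n^2$ module $\pi^{a_{\psi_0}}\BD(X)_{S,\psi_0}/\pi^{a_{\psi_0}+1}\BD(X)_{S,\psi_0}$, it lives purely in characteristic $p$, and it is not the reduction of any single flat model over $\SuetO_E$. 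So the ``check on the generic fiber $\tilde E$'' step has nothing to apply to for the $\varphi_0$-factor $\Nrd_{\varphi_0}$ of $\Nrd_r$. The paper closes this by a direct determinant computation: each $\tilde\psi$-graded piece of $\pi^{a_{\psi_0}}\Lie_{\psi_0}X$ (for $\tilde\psi$ over $\psi_0$) has rank $1$ and $\Pi$ acts nilpotently, so Lemma \ref{KC1l} gives $\det(v_0+v_1\Pi+\cdots+v_{n-1}\Pi^{n-1})=\det v_0$ over any ring killed by $p$, identifying the determinant function of this piece with $\prod_{\tilde\psi}\chi_{\tilde\psi}=\Nrd_{\varphi_0}$ independently of which quotient it is. Some substitute for that lemma (or for the flatness of the full local model, which is only established later in the paper) is needed to complete your argument.
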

\begin{remark}
In the case where $r=r^\circ$, this corollary follows from Lemma
\ref{KrDrDrin}.  
\end{remark}
\begin{proof} Clearly we can restrict to the case where $S$ is a
  scheme over $\kappa_{\tilde{E}}$. 

We first prove (i). 
The last condition of (\ref{Eisenstein2e}) says that for $\psi \neq
\psi_0$ the first 
arrow of (\ref{KE2e}) exists. By (\ref{KC5e}) we have on
both sides locally free $\mathcal{O}_S$-modules of the same
rank. Therefore this arrow is an isomorphism. 

For the second line  in \eqref{KE2e}, we begin with the case where $S = \Spec k$.
The second condition of (\ref{Eisenstein2e}) says that the rank of the
following homomorphism of vector spaces
\begin{displaymath}
   \pi^{a_{\psi_0}}: \Lie_{\psi_0} X \rightarrow \Lie_{\psi_0} X
\end{displaymath} 
is at most  $n$. This shows 
\begin{displaymath}
   \dim_k (\Lie_{\psi_0} X /\pi^{a_{\psi_0}} \Lie_{\psi_0} X) \geq \dim_k
   \Lie_{\psi_0}X \, - n = a_{\psi_0}n^2.
\end{displaymath}
Therefore the second arrow of (\ref{KE2e}) is an isomorphism because on
the left hand side we have a vector space of dimension
$n^2a_{\psi_0}$. 

It follows that the $\mathcal{O}_S$-module $\Lie_{\psi_0} X
/\pi^{a_{\psi_0}} \Lie_{\psi_0}$ has in each point of $S$ the same
rank $n^2a_{\psi_0}$. This already proves  assertion (i) in the case
where $S$ is a reduced scheme.

The general case is a consequence of 
 Lemma \ref{Eisen1l} below, applied to $L = \Lie_{\psi_0}X$, $f = \pi^{a_{\psi_0}}$,
$r = a_{\psi_0}n^{2}$, $m = a_{\psi_0}n^{2} + n$, and $s= n$.

Now we prove the corollary. We remark that the Kottwitz condition
$(\mathbf{K}_r)$ is 
satisfied for $\Lie X$. This is clear by the first isomorphism of
(\ref{KE2e}) for the part $\Lie_{\psi} X$, for $\psi\neq \psi_0$. By the second isomorphism
it suffices to show that the determinant morphism for the $\SuetO_D$-module $\pi^{a_{\psi_0}}
\Lie_{\psi_0} X$ is $\Nrd_{\varphi_0} = \prod_{\tilde{\psi}}
\chi_{\tilde{\psi}}$, where $\tilde{\psi}$ extends $\varphi_0$. But it
follows from the isomorphism (\ref{KE2e}) and (\ref{KC5e})  
that $\rank \pi^{a_{\psi_0}} \Lie_{\tilde{\psi}} X = 1$ for each
$\tilde{\psi} \in \tilde{\Psi}$ which extends $\varphi_0$. Therefore we
conclude by  Lemma \ref{KC1l}.    

Now we prove (ii).  We have to
prove the Eisenstein conditions. For $\psi \neq \psi_0$ they are
clear. We denote by ${K}_{\psi_0}$ the kernel of the natural map  
$\mathbb{D}(X)_{S,\psi_0} \rightarrow \Lie_{\psi_0} X$. We obtain
$\pi^{a_{\psi_0}}\mathbb{D}(X)_{S,\psi_0} \supset {K}_{\psi_0}
\supset \pi^{a_{\psi_0}+1}\mathbb{D}(X)_{S,\psi_0}$. Then the rank
condition shows that  
\begin{displaymath}
\pi^{a_{\psi_0}}\mathbb{D}(X)_{S,\psi_0}/{K}_{\psi_0} =
\pi^{a_{\psi_0}}\Lie_{\psi_0} X
\end{displaymath}
has rank $n$. This proves the second condition of
(\ref{Eisenstein2e}). The other Eisenstein conditions are trivially
satisfied. 
\end{proof}  

In the preceding proof, we used the following lemma. 
\begin{lemma}\label{Eisen1l}
Let $R$ be a local ring with residue  field $k$. Let $L$ be a
finitely generated free $R$-module of rank $m$. Let $f: L \rightarrow
L$ be an endomorphism. 

Let $r = \dim_k (L/f(L)) \otimes k$ and let $s = m -r$. 
We assume that 
\begin{displaymath}
\bigwedge^{s+1} f = 0.
\end{displaymath}
Then $L/f(L)$ is a free $R$-module of rank $r$. 
\end{lemma}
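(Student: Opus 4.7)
The plan is to reduce $f$ to a block-diagonal form over $R$ using Nakayama's lemma, and then to exploit the assumption $\bigwedge^{s+1} f = 0$ entry-by-entry.

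First, let $\bar f : \bar L \to \bar L$ be the reduction of $f$ modulo the maximal ideal $\mathfrak{m}$ of $R$, where $\bar L = L \otimes_R k$. Since $\dim_k(L/f(L)\otimes_R k) = \dim_k \Coker \bar f = r$, the rank of $\bar f$ is $s = m-r$. I would choose bases of the source and target copies of $\bar L$ in which $\bar f$ has matrix $\begin{pmatrix} I_s & 0 \\ 0 & 0\end{pmatrix}$, and lift them to bases of $L$. With respect to these lifted bases, $f$ has a matrix
\[
M = \begin{pmatrix} A & B \\ C & D \end{pmatrix},
\]
where $A$ is $s\times s$ with $A \equiv I_s \pmod{\mathfrak m}$ (hence $A \in \GL_s(R)$) and $B,C,D$ have entries in $\mathfrak m$.

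Next, I would perform elementary block row and column operations over $R$: multiplying $M$ on the left by $\begin{pmatrix} I & 0 \\ -CA^{-1} & I \end{pmatrix}$ and on the right by $\begin{pmatrix} I & -A^{-1}B \\ 0 & I \end{pmatrix}$ yields the block-diagonal matrix
\[
M' = \begin{pmatrix} A & 0 \\ 0 & D' \end{pmatrix}, \qquad D' = D - CA^{-1}B.
\]
These operations correspond to changes of basis in the source and target, so they do not alter the isomorphism class of $L/f(L)$, and they conjugate $\bigwedge^{s+1} f$ by $\bigwedge^{s+1}$ of invertible matrices, so the hypothesis $\bigwedge^{s+1} M' = 0$ persists.

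Now comes the key observation, which is the only real use of the exterior-power hypothesis. The entries of $\bigwedge^{s+1} M'$ in the standard basis are the $(s+1)\times(s+1)$ minors of $M'$. Because $M'$ is block-diagonal, for every entry $d_{ij}$ of $D'$ there is a minor formed from all $s$ rows and columns through $A$ together with the single row and column of $d_{ij}$; this minor equals $\det(A)\cdot d_{ij}$. Since $\det(A) \in R^\times$ and this minor vanishes, we conclude $D' = 0$. Therefore
\[
L/f(L) \cong R^s/A\, R^s \;\oplus\; R^r/D'\, R^r \cong 0 \oplus R^r,
\]
a free $R$-module of rank $r$, as required. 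The main obstacle, such as it is, is simply to recognize that the diagonalization is legitimate in the local setting and that the exterior-power condition is preserved and extracts $D'=0$ cleanly; once the block form is in hand the rest is formal.
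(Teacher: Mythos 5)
Your proof is correct and follows essentially the same route as the paper's: both choose bases adapted to the rank decomposition of $f$ modulo the maximal ideal, reduce the matrix of $f$ to a block form with an invertible $s\times s$ block, and use the vanishing of the $(s+1)\times(s+1)$ minors to force the complementary block to vanish. The only difference is cosmetic: the paper applies the minor condition first to kill the lower-right block and then adjusts the basis to clear the off-diagonal block, whereas you perform the Schur-complement reduction first and then apply the minor condition to $D'$.
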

\begin{proof} The exact sequence 
\begin{displaymath}
L \otimes k \rightarrow L \otimes k \rightarrow (L/f(L)) \otimes k
\rightarrow 0
\end{displaymath}
shows that there is a basis of $L \otimes k$ of the form
\begin{equation}\label{Le1e}
f(\bar{y}_1), \ldots, f(\bar{y}_s), \bar{e}_1, \ldots, \bar{e}_r\, ,
\end{equation}
where $\bar{y}_1, \ldots, \bar{y}_s, \bar{e}_1, \ldots,
\bar{e}_r \in L \otimes k$ and the images of $\bar{e}_1, \ldots,
\bar{e}_r$ in $(L/f(L))\otimes k$ form a basis. 

Lifting the elements $\bar{y}_1, \ldots, \bar{y}_s, \bar{e}_1, \ldots,
\bar{e}_r$ to $L$ we obtain a basis of this $R$-module, 
\begin{displaymath}
f(y_1), \ldots, f(y_s), e_1, \ldots e_r. 
\end{displaymath}
The elements $\bar{y}_1, \ldots, \bar{y}_s \in L \otimes k$ are
linearly independent. Therefore we find a second basis of $L$, 
\begin{equation}\label{Le12e}
y_1, \ldots, y_s, x_1, \ldots, x_r. 
\end{equation}
We write the matrix of $f$ with respect to the basis (\ref{Le1e}) 
and (\ref{Le12e}) as a $(s+r)\times (s+r)$ block matrix:

\begin{displaymath}
      \left(
\begin{array}{cc}
E_s & A \\ 
\mathbf{0} & B
 \end{array}
 \right)
\end{displaymath}
The assumption $\bigwedge^{s+1} f = 0$ implies that the matrix $B$ is
zero. We find
\begin{displaymath}
f(x_i) = \sum_{j} a_{ji}f(y_j), \quad a_{ji} \in R.
\end{displaymath}
 
We set $z_i = x_i - \sum_{j} a_{ji} y_j \in \Ker f$. Clearly
\begin{displaymath}
y_1, \ldots, y_s, z_1, \ldots, z_r
\end{displaymath}
is also a basis of $L$, i.e.,  we may assume WLOG that $x_i = z_i$. But
then the matrix of $f$ becomes
\begin{displaymath}
      \left(
\begin{array}{cc}
E_s & \mathbf{0} \\ 
\mathbf{0} & \mathbf{0}
 \end{array}
 \right) .
\end{displaymath}
From this our assertion is obvious. 
\end{proof}
 
Before continuing,  we add a lemma needed later which is proved in the
same manner.

\begin{lemma}\label{Eisen2l}
Let $n$, $m$, and $r$ be natural numbers. 
Let $R$ be a commutative ring. Let $W$ be a locally free $R$-module of
rank $n$. Let $f: W \rightarrow W$ be an endomorphism such that $\Coker
f$ is a locally free $R$-module of rank $r$. 

\noindent Let $V \subset W$ be a direct summand of rank $m$. We assume that $s =
m- r \geq 0$ and that 
\begin{displaymath}
\bigwedge^{s+1}(f_{|V}) = 0.
\end{displaymath}  

\noindent Then $\Ker f \subset V$.
\end{lemma}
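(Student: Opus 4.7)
The plan is to follow the same scheme as in Lemma \ref{Eisen1l}: reduce to a local ring and then run an explicit basis computation driven by the wedge hypothesis. Since $V$ is a direct summand, the conclusion $\Ker f \subset V$ is equivalent to the vanishing of the composite $\Ker f \hookrightarrow W \twoheadrightarrow W/V$, and $W/V$ is locally free; hence the statement is local on $\Spec R$, and I may pass to $R$ local with residue field $k$ and maximal ideal $\fkm$. Then $W$ is free of rank $n$ and $V$ is a free direct summand of rank $m$. Moreover, projectivity of $\Coker f$ splits the surjection $W\twoheadrightarrow \Coker f$, so $f(W)$ is a direct summand of $W$ of rank $n-r$, and in turn $\Ker f$ is a direct summand of $W$ of rank $r$.

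Over the residue field, the hypothesis $\bigwedge^{s+1}(\bar f|_{\bar V}) = 0$ gives $\dim_k \bar f(\bar V) \leq s$, hence $\dim_k(\bar V \cap \Ker\bar f) \geq m-s = r = \dim_k \Ker\bar f$, so $\Ker \bar f \subset \bar V$. I would then choose $\bar z_1,\ldots,\bar z_r$ a basis of $\Ker\bar f$, complete to a basis $\bar y_1,\ldots,\bar y_s,\bar z_1,\ldots,\bar z_r$ of $\bar V$, and complete further to a basis of $\bar W$ by $\bar w_1,\ldots,\bar w_{n-m}$ lying in a chosen complement $V'$ of $V$ in $W$. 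A short dimension count shows that $\bar f(\bar y_\bullet)$ and $\bar f(\bar w_\bullet)$ are linearly independent and together span $\bar f(\bar W)$, so adjoining lifts $\bar e_1,\ldots,\bar e_r$ of a basis of $\Coker \bar f$ yields a second basis of $\bar W$. Applying Nakayama, I lift everything to $R$ and obtain two bases of $W$: the basis $(y_\bullet,z_\bullet,w_\bullet)$ with $y,z\in V$ and $w\in V'$, and the basis $(f(y_\bullet),f(w_\bullet),e_\bullet)$.

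With respect to these two bases the columns of $f$ corresponding to $y_i$ and $w_k$ are standard unit vectors, while each $z_j$-column consists of three stacked blocks $(A_1,A_2,A_3)^T$ whose entries lie in $\fkm$. Restricting to $V$ and expanding the $(s+1)\times(s+1)$ minors that use all $s$ rows of the top $E_s$-block plus one row below, each such minor equals a single entry of $A_2$ or of $A_3$, so the wedge hypothesis forces $A_2 = 0$ and $A_3 = 0$. From this block form one reads off that the elements $u_j := z_j - \sum_k (A_1)_{kj} y_k \in V$ lie in $\Ker f$, and the linear independence of $f(y_\bullet),f(w_\bullet)$ in the second basis shows that every element of $\Ker f$ is an $R$-linear combination of the $u_j$. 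Hence $\Ker f = \mathrm{span}_R(u_1,\ldots,u_r) \subset V$.

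The main obstacle is the lifting step: arranging the two bases so that the wedge hypothesis translates cleanly into $A_2=A_3=0$. In particular, the fact that $\bar f(\bar y_\bullet)$ and $\bar f(\bar w_\bullet)$ together form a basis of $\bar f(\bar W)$ is essential, and rests on choosing the $\bar z_\bullet$ to be a basis of $\Ker\bar f$ itself rather than of some larger subspace of $\bar V$. Once the bases are set up correctly, the matrix calculation and the identification of $\Ker f$ are routine, in direct parallel with the argument in Lemma \ref{Eisen1l}.
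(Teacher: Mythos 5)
Your proof is correct, but the lifting step is genuinely different from the one in the paper. The paper reduces (via noetherian and Krull-intersection considerations) to an \emph{artinian} local ring and then runs a successive-approximation argument by contradiction: it writes a basis of $V$ of the form $v_1,\dots,v_s,e_1+\rho_1,\dots,e_r+\rho_r$ with $e_i$ a basis of $\Ker f$ and $\rho_i\in\mathfrak{m}^tW$, uses the vanishing of the $(s+1)$-minors to show the $\rho_i$ can be improved to lie in $\mathfrak{m}^{2t}W$, and contradicts the maximality of $t$. You instead work over an arbitrary local ring and perform the whole computation in one shot, by the device of enlarging the codomain basis to $\bigl(f(y_\bullet),f(w_\bullet),e_\bullet\bigr)$ with the $w_k$ taken in a complement $V'$ of $V$: the count $s+(n-m)=n-r=\rank f(W)$ is exactly what makes these images a basis of $f(W)$, the $(s+1)$-minors then kill both lower blocks $A_2,A_3$ at once, and the elements $u_j=z_j-\sum_k(A_1)_{kj}y_k$ are directly seen to span $\Ker f$. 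This is really the argument of Lemma \ref{Eisen1l} extended to a proper submodule $V$, and it buys something: no reduction to noetherian or artinian rings is needed (only the harmless localization, justified since $\Ker f\subset V$ is equivalent to the vanishing of $\Ker f\to W/V$ with $W/V$ locally free, and all hypotheses commute with localization), and the conclusion comes with an explicit basis of $\Ker f$ inside $V$ rather than by contradiction. All the individual steps check out: the identifications $\overline{\Ker f}=\Ker\bar f$ and $\overline{f(W)}=\bar f(\bar W)$ follow from the splittings provided by the projectivity of $\Coker f$, the Nakayama lifts of the two bases are legitimate, and each $(s+1)\times(s+1)$ minor using the $E_s$-block plus one extra row is indeed a single entry of $A_2$ or $A_3$.
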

\begin{proof} We note that the assumptions of the lemma are compatible
with base change $R \rightarrow S$. The situation of the lemma is
always defined over a noetherian subring. Therefore we may assume that
$R$ is noetherian. The desired inclusion may be checked over the
localizations of $R$. Therefore we may assume that $R$ is a local
noetherian ring with maximal ideal $\mathfrak{m}$. Finally 
the matrix of $\Ker f \rightarrow W/V$ is zero if it is zero modulo
$\mathfrak{m}^t$ for all $t \in \mathbb{N}$. Therefore we may assume
that $R$ is an artinian local ring. We also note that under the
assumptions $\Ker f$ is a direct summand of $W$ of rank $r$. 

If $R$ is a field,  the assumption of the lemma implies $\rank f_{|V} \leq
s$. This implies $\dim \Ker f_{|V} \geq r = \dim \Ker f$. This implies
$\Ker f_{|V} = \Ker f$ and the lemma. 

Now let $(R, \ideal{m})$ be any artinian local ring.   Let $e_1,
\ldots, e_r$ be a basis of $\Ker 
f$. It follows from the case of a field that $V$ has a basis of the
form
\begin{equation}\label{L2e}
   v_1, \ldots, v_s, e_1 + \rho_1, \ldots, e_r + \rho_r,
\end{equation} 
where $\rho_i \in \ideal{m}W$. If the lemma is false we can choose $t
\in \mathbb{N}$ maximal such that there is a basis of the form
(\ref{L2e}) with $\rho_i \in \ideal{m}^tW$. 

By assumption $f(v_1), \ldots, f(v_s)$ are linearly independent
modulo $\ideal{m}$. Therefore we find a basis of $V$ of the form 
\begin{equation}\label{L21e}
   f(v_1), \ldots, f(v_s), u_1, \ldots, u_r.
\end{equation}  
We write the matrix of $f_{|V}$ with respect to the matrices
(\ref{L2e}) and (\ref{L21e}) as a block matrix, 

\begin{displaymath}
      \left(
\begin{array}{cc}
E_s & \ast \\ 
\mathbf{0} & X 
 \end{array}
 \right) .
\end{displaymath}
By the assumption $\wedge^{s+1}(f_{|V}) = 0$ all determinants of $(s+1)
\times (s+1)$ minors of this matrix are zero. Therefore the matrix $X$
is zero. We obtain equations
\begin{displaymath}
   f(\rho_i) = f(e_i + \rho_i) = \sum_{j} a_{ji} f(v_j).
\end{displaymath} 
Since the left hand side is in $\ideal{m}^tW$ we conclude that $a_{ji}
\in \ideal{m}^t$. We find $\rho_i - \sum_{j} a_{ji} v_j \in \Ker
f$. We may write 
\begin{displaymath}
   \rho_i - \sum_{j} a_{ji} v_j = \sum_k c_{ki}e_k,
\end{displaymath} 
where necessarily $c_{ki} \in \ideal{m}^t$. 
The last equation gives:
\begin{displaymath}
  (e_i + \rho_i) - \sum_{j} a_{ji} v_j - \sum_k c_{ki}(e_k +\rho_k) =
  e_i - \sum_k c_{ki}\rho_k.
\end{displaymath}
 The RHS is an element of $V$. But this shows that we may
 replace in (\ref{L2e}) the element $e_i + \rho_i$ by $e_i + \rho'_i$
 with $\rho'_i = \sum_k c_{ki}\rho_k \in \ideal{m}^{2t}W$. This is a
 contradiction. 
\end{proof}
\begin{remark} If $R$ is a local ring such that each
element of the maximal ideal $\mathfrak{m}$ is nilpotent, we can
replace the condition that ``$\Coker f$ is a free $R$-module of
rank $r$'' by the weaker assumption that ``$\Ker f \subset W$ is a
direct summand of rank $r$''. Indeed, over $R$ any free submodule of a
free module is a direct summand. This shows that $\Coker f$ is free. 
The weaker assumption also suffices if $R$ is a ring such that $R
\rightarrow \prod R_{\mathfrak{p}}$ is injective, where $\mathfrak{p}$
runs over all minimal prime ideals of $R$, since then we may reduce to
$R = R_{\mathfrak{p}}$. 
\end{remark}

We will now study the display of a formal $\SuetO_D$-module over
a $\kappa_E$-scheme $S$ which satisfies $(\mathbf{D}_r)$.   
To ease the notation we will assume that $S = \Spec R$. We denote by
$\mathcal{P} = (P,Q,F, \dot{F})$ the display of $X$. 
We use the notation $I := I_R := \ker (W(R)\to R)$. Recall that $\mathbb{D}(X)_R =
P/IP$. 

Again write $\Psi = \Hom(\kappa , \kappa_E)$ for the set of field
embeddings. We obtain the decompositions
\begin{displaymath}
 P = \bigoplus_{\psi \in \Psi} P_{\psi}, \quad  Q = \bigoplus_{\psi
   \in \Psi} Q_{\psi}. 
\end{displaymath}
By Proposition \ref{sfO2p} we obtain: 
\begin{equation}\label{Eisenstein6e}
\begin{aligned}
\pi^{a_{\psi_0}+1}P_{\psi_0} + IP_{\psi_0} \subset Q_{\psi_0} &\subset
\pi^{a_{\psi_0}}P_{\psi_0} + IP_{\psi_0},\\
Q_{\psi} &= \pi^{a_{\psi}}P_{\psi}  + IP_{\psi}, \quad \text{ for } 
\psi \neq \psi_0. 
\end{aligned}
\end{equation}
The maps $F$ and $\dot{F}$ induce maps 
\begin{displaymath}
F_{\psi}: P_{\psi} \rightarrow P_{\psi \sigma}, \quad \dot{F}_{\psi}:
P_{\psi} \rightarrow P_{\psi \sigma}.  
\end{displaymath}
Here $\sigma$ denotes the Frobenius automorphism of $F^{t}$ over
$\mathbb{Q}_p$. 
We set $Q'_{\psi} = P_{\psi}$ for $\psi \neq \psi_0$ and we define
$Q'_{\psi_0}$ to be the submodule such that $P_{\psi_0} \supset
Q'_{\psi_0} \supset Q_{\psi_0}$ and such that $Q'_{\psi_0}/Q_{\psi_0}$
is the kernel of the homomorphism
\begin{displaymath}
\pi^{a_{\psi_0}}: P_{\psi_0}/Q_{\psi_0} \rightarrow   P_{\psi_0}/Q_{\psi_0}.
\end{displaymath} 
By Lemma \ref{Eisen1l} we know that this kernel is a direct summand of
$P_{\psi_0}/Q_{\psi_0}$. 

We set 
\begin{displaymath}
Q' =  \bigoplus_{\psi} Q'_{\psi}, \quad F'_{\psi} = F_{\psi}\pi^{a_{\psi}},
\quad \dot{F}'_{\psi} = \dot{F}_{\psi}\pi^{a_{\psi}}. 
\end{displaymath}
We obtain Frobenius-linear homomorphisms
\begin{equation}\label{RamFunkt1e}
F' =  \bigoplus_{\psi} F_{\psi}\pi^{a_{\psi}}: P \rightarrow P, \quad 
\dot{F} =  \bigoplus_{\psi} \dot{F}_{\psi}\pi^{a_{\psi}}: Q' \rightarrow
P.   
\end{equation}
We claim that the quadruple $\mathcal{P}' = (P,Q',F', \dot{F}')$ is
the display of a special formal $\SuetO_D$-module.
\begin{theorem}\label{equivofcat}
Let $R$ be a $\kappa_E$-algebra. We assume that the nilradical of $R$
is a nilpotent ideal.
Let $\mathcal{C}_{r,R}$ be the category of $r$-special formal $\SuetO_D$-modules,  and let
$\mathcal{C}_{\mathbf{0},R}$ the category of special formal
$\SuetO_D$-modules (Definition \ref{ODmod1d}). 

The construction $\mathcal{P} \mapsto \mathcal{P}'$ is an equivalence
of categories
\begin{equation}\label{Eisenstein8e}
\mathcal{C}_{r,R} \rightarrow \mathcal{C}_{\mathbf{0},R}. 
\end{equation} 
\end{theorem}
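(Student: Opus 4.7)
My plan is to verify that the functor $\mathcal{P}\mapsto\mathcal{P}'$ is (a) well-defined (i.e.\ lands in $\mathcal{C}_{\mathbf{0},R}$), (b) fully faithful, and (c) essentially surjective. The key structural input throughout is Proposition \ref{DOD1p}: Zariski-locally, $P$ is free of rank one over $\SuetO_D\otimes W(R)$, so $\iota(\pi^{a_\psi})$ acts as a non-zero-divisor on each $P_\psi$. I expect the main obstacle to lie in step (c), specifically in establishing a divisibility statement for the Frobenius of a Drinfeld special display.

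For (a), the inclusions $IP\subset Q\subset Q'\subset P$ hold by construction, and the containment $\iota(\pi^{a_\psi})Q'_\psi\subset Q_\psi$, immediate from (\ref{Eisenstein6e}) together with the definition of $Q'_{\psi_0}$, ensures that $F'_\psi=F_\psi\circ\iota(\pi^{a_\psi})$ and $\dot F'_\psi=\dot F_\psi\circ\iota(\pi^{a_\psi})$ are well-defined. The quotient $P/Q'$ is locally free, concentrated at $\psi_0$ with rank $n$, by Proposition \ref{sfO2p}(i) and Lemma \ref{Eisen1l}; the display axioms pass from $\mathcal{P}$ to $\mathcal{P}'$ since $\iota(\pi^{a_\psi})$ is $W(R)$-linear. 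To confirm $(\mathbf{D}_{r^\circ})$ for $\mathcal{P}'$, I apply Proposition \ref{sfO2p}(ii): the rank condition is immediate from the computation of $P/Q'$, and the Eisenstein condition $(\mathbf{E}_{r^\circ})$ reduces, by Remarks \ref{eisrem}(ii), to the assertion that $\iota(\pi)$ acts on $P_{\psi_0}/Q'_{\psi_0}$ through $\varphi_0$. This follows from the original identity $(Q_0\cdot Q_{A_{\psi_0}})(\iota(\pi))=0$ on $P_{\psi_0}/Q_{\psi_0}$: in a $\kappa_E$-algebra we have $Q_{A_{\psi_0}}(\iota(\pi))=\iota(\pi^{a_{\psi_0}})$, and $P_{\psi_0}/Q'_{\psi_0}$ is by construction the image of this endomorphism on $P_{\psi_0}/Q_{\psi_0}$, so the identity forces $Q_0(\iota(\pi))=\iota(\pi)-\varphi_0(\pi)$ to vanish there. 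For (b), an $\SuetO_D$-equivariant $W(R)$-linear map $\phi\colon P_1\to P_2$ commutes with $F,\dot F$ if and only if it commutes with $F',\dot F'$, the nontrivial direction being a cancellation using the injectivity of $\iota(\pi^{a_\psi})$; the equivalence $\phi(Q_1)\subset Q_2\Leftrightarrow\phi(Q'_1)\subset Q'_2$ follows by an analogous argument using the intrinsic characterization of $Q$ inside $Q'$.

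For (c), given $\mathcal{P}''=(P,Q'',F'',\dot F'')\in\mathcal{C}_{\mathbf{0},R}$, I reverse the construction: set $Q_\psi:=\iota(\pi^{a_\psi})P_\psi+IP_\psi$ for $\psi\neq\psi_0$, let $Q_{\psi_0}\subset Q''_{\psi_0}$ be the unique submodule with $Q''_{\psi_0}/Q_{\psi_0}=\ker(\iota(\pi^{a_{\psi_0}})\mid P_{\psi_0}/Q_{\psi_0})$ (existence via Lemma \ref{Eisen1l}), and define $F_\psi,\dot F_\psi$ as the unique $\sigma$-linear factorizations of $F''_\psi,\dot F''_\psi$ through $\iota(\pi^{a_\psi})$. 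The crux is to establish these factorizations. After choosing a local $\SuetO_D\otimes W(R)$-basis of $P$, the map $F''_\psi$ is represented by an element of $\SuetO_D\otimes_{\SuetO_{F^t},\psi}W(R)$, and the required factorization amounts to divisibility of this element by $\pi^{a_\psi}$. This divisibility is forced by the $r^\circ$-Hodge filtration structure together with the fact that $\iota(\pi)$ annihilates $\Lie X''=P/Q''$ (since $\varphi_0(\pi)\in\mathfrak{m}_{O_E}$ vanishes in the $\kappa_E$-algebra $R$); the display relation $F''(Q'')\subset pP$ combined with $Q''\supset\iota(\pi)P$ propagates powers of $\iota(\pi)$ into the Frobenius. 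The hypothesis that the nilradical of $R$ is nilpotent permits a reduction to the reduced case, and finally to geometric points where the explicit description of Drinfeld special Dieudonn\'e modules from the proof of Proposition \ref{DOD1p} makes the divisibility transparent. Once the factorizations exist and are unique, the quadruple $(P,Q,F,\dot F)$ is easily checked to be an $r$-special display by Proposition \ref{sfO2p}(ii), and its twist is $\mathcal{P}''$ by construction.
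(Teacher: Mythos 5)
Your steps (a) and (b), and the construction of the quasi-inverse in (c), all rest on the opening claim that, since $P$ is locally free of rank one over $\SuetO_D\otimes_{\BZ_p}W(R)$, the operator $\iota(\pi^{a_\psi})$ is a non-zero-divisor on $P_\psi$. This is false for non-reduced $R$: in $\SuetO_D\otimes_{\BZ_p}W(R)$ multiplication by $\pi$ has determinant a unit times $p^{n^2f}$, and $p$ is a zero-divisor in $W(R)$ as soon as $R$ has nilpotents. Consequently the cancellation argument for full faithfulness breaks down, and in (c) the asserted $\sigma$-linear factorizations of $F''_\psi,\dot F''_\psi$ through $\iota(\pi^{a_\psi})$ are neither forced to exist over $W(R)$ nor unique when they do. The proposed remedy --- ``the hypothesis that the nilradical of $R$ is nilpotent permits a reduction to the reduced case'' --- is not substantiated and cannot work by a naive divisibility argument: divisibility of an element of $\SuetO_D\otimes W(R)$ by $\pi^{a_\psi}$ can neither be detected nor uniquely lifted from $W(R_{\red})$. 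This is precisely the point where the paper changes strategy: the quasi-inverse is constructed directly only for reduced $R$ (where $\pi$ acts injectively on $W(R)$, exactly as in your outline), and the passage to non-reduced $R$ is done by induction along divided-power thickenings $R\to S$, using Grothendieck--Messing to show that the functor induces a bijection between liftings of the Hodge filtration of $X$ and of $X'$ (the key being the isomorphism $\mathbb{D}(X)_{R,\psi_0}/\pi\mathbb{D}(X)_{R,\psi_0}\xrightarrow{\pi^{a_{\psi_0}}}\pi^{a_{\psi_0}}\mathbb{D}(X)_{R,\psi_0}/\pi^{a_{\psi_0}+1}\mathbb{D}(X)_{R,\psi_0}$). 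That crystalline deformation step is the missing idea in your proposal.

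A secondary gap in (a): asserting that ``the display axioms pass from $\mathcal{P}$ to $\mathcal{P}'$ since $\iota(\pi^{a_\psi})$ is $W(R)$-linear'' skips the one axiom that is genuinely non-trivial, namely that the linearization of $\dot F'=\bigoplus_\psi\dot F_\psi\iota(\pi^{a_\psi})$ is surjective (and that $\mathcal{P}'$ is nilpotent). Since $\iota(\pi^{a_\psi})Q'_\psi$ is in general a proper submodule of $Q_\psi$, surjectivity does not follow formally from that of $\dot F^\sharp$; the paper verifies it by choosing a normal decomposition and reducing the unit-determinant condition to the perfect-field case, which you would also need to do.
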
 

\begin{proof}
We begin with the case $S = \Spec k$, where $k$ is a perfect
field. Let the covariant Dieudonn\'e $M_X$ be identified with $P$.
In this case (\ref{Eisenstein6e}) is equivalent with 
\begin{equation}\label{Eisenstein6e2}
\begin{aligned}
\pi^{a_{\psi_0}+1}M_{\psi_0,X} \subset &VM_{\psi_0 \sigma,X} \subset 
\pi^{a_{\psi_0}}M_{\psi_0},\\
VM_{ \psi \sigma, X} &= \pi^{a_{\psi}} M_{\psi,X} \quad \text{ for }
\psi \neq \psi_0 .
\end{aligned}
\end{equation}
We define 
\begin{equation}\label{Eisenstein7e}
\begin{aligned}
V'= \pi^{-a_{\psi}}V&\colon M_{\psi \sigma,X} \rightarrow
M_{\psi,X}\\
F'= \pi^{a_{\psi}}F&\colon M_{\psi,X} \rightarrow M_{\psi\sigma,X} \, .
\end{aligned}
\end{equation}

Then the Dieudonn\'e module $(M_{X}, F',V')$ corresponds
to the display above. From this we see that $\mathcal{P}'$ is the Dieudonn\'e module of a special formal
$\SuetO_D$-module. Indeed, by the remark after (\ref{Eisenstein2e}) we
need only to verify $({\bf R}_r)$ (\ref{KC5e}) for $\mathcal{P}'$. But this follows
easily from (\ref{KC5e2}) and (\ref{Eisenstein6e2}).

If conversely $(M, F' , V')$ is the Dieudonn\'e module of a special
formal $\SuetO_D$-module, then we find 
\begin{displaymath}
F'M_{\psi_0,X} \subset \pi^{e-1}M_{\psi_0\sigma,X}, \quad
F'M_{\psi,X} \subset \pi^{e}M_{\psi\sigma,X}, \; \text{for} \; \psi \neq \psi_0.
\end{displaymath} 
This follows because $V'M_{\psi\sigma} = M_{\psi}$ for $\psi \neq
\psi_0$ and $M_{\psi_0}/V'M_{\psi_0\sigma}$ is annihilated by $\pi$. 
Therefore the formulas $V = \pi^{a_{\psi}}V'$ and $F =
\pi^{-a_{\psi}}F'$ define a Dieudonn\'e module structure on $M$ such
that (\ref{Eisenstein6e2}) is satisfied. This shows that $(M,F,V)$ is
the Dieudonn\'e module of an $r$-special  formal $\SuetO_D$-module. This proves the theorem in the case of a perfect
field.  

In the general case we need first to verify that $\mathcal{P}'$ is a
display. The only non-trivial property is that $\dot{F}'$ is a
Frobenius-linear epimorphism. To show this,  we take locally on $\Spec
R$ a  normal decomposition of $\mathcal{P}'$ and consider the matrix
of $F'  \bigoplus \dot{F}'$. We have to show that  the image of the
determinant of this matrix in $R$ is a unit. But this property
follows since we know it for a perfect field. The same argument
shows that $\mathcal{P}'$ is nilpotent. Therefore we have defined a
functor (\ref{Eisenstein8e}).  

We construct first a quasi-inverse functor in the case that the ring $R$ is
reduced. Let 
$\mathcal{P}'$ be the display of a special formal $\SuetO_D$-module.  
We note that $P'/IP'$ is a locally free $\SuetO_D \otimes R$ module of
rank $1$. In particular,  it has a filtration by direct summands as $R$-modules, 
\begin{displaymath}
0 = \pi^{e} (P'_{\psi}/IP'_{\psi}) \subset \pi^{e-1}
(P'_{\psi}/IP'_{\psi}) \subset \ldots \subset \pi 
(P'_{\psi}/IP'_{\psi}) \subset P'_{\psi}/IP'_{\psi} 
\end{displaymath} 
 The multiplication by $\pi$ gives
an isomorphism between the subquotients of this filtration. 

If we have a direct $R$-module summand $L \subset P'_{\psi}/IP'_{\psi}$
such that $\pi (P'_{\psi}/IP'_{\psi}) \subset L \subset
P'_{\psi}/IP'_{\psi}$,  we obtain therefore a direct $R$-module summand 
$\pi^{a_{\psi} + 1} (P'_{\psi}/IP'_{\psi}) \subset \pi^{a_{\psi}} L \subset
\pi^{a_{\psi}}P'_{\psi}/IP'_{\psi}$. 

This gives the possibility to invert our construction $\mathcal{P}
\rightarrow \mathcal{P}'$. We set $P = P'$. We note that  $Q'_{\psi} =
P'_{\psi}$ if $\psi \neq \psi_0$. We set in general 
\begin{equation}\label{sfr1e}
Q_{\psi} = \pi^{a_{\psi}}Q'_{\psi} + IP_{\psi}.
\end{equation}

We want to define $F$ and $\dot{F}$ by the formulas
\begin{displaymath}
F_{\psi} = \pi^{-a_{\psi}}F'_{\psi}, \quad
\dot{F}_{\psi} = \pi^{-a_{\psi}}\dot{F}'_{\psi}.
\end{displaymath}
We note that $F'_{\psi}Q'_{\psi} = p\dot{F}'_{\psi}Q'_{\psi}$. This implies for
$\psi \neq \psi_0$ that $F'_{\psi}P_{\psi} \subset
\pi^{e}P_{\psi\sigma}$. From $\pi P'_{\psi_0} \subset Q'_{\psi_0}$ we
conclude that $F'_{\psi_0}P_{\psi_0} \subset \pi^{e-1}P_{\psi_0\sigma}$.
Since  $R$ is reduced,  $\pi$ operates injectively on $W(R)$ and therefore
the definition of $F_{\psi}$ makes sense. From (\ref{sfr1e}) we see
that also the definition of $\dot{F}$ makes sense. We have to show
that $\mathcal{P} = (P,Q,F,\dot{F})$ is indeed a display. But this
follows from the case of a perfect field treated above. 

Now we treat the case of a nonreduced ring $R$. We assume that we
have a divided power thickening $R \rightarrow S$, and that the
theorem is already known for $S$. We denote by $X$ an $r$-special 
formal $\SuetO_D$-module over $S$,  and
by $X'$ the corresponding 
special formal $\SuetO_D$-module over $S$. We show that our functor
gives a bijection between the liftings of $X$ to an $r$-special  formal
$\SuetO_D$-module over $R$  and the
liftings of $X'$ to a special 
formal $\SuetO_D$-module over $R$. This will prove the theorem by induction. By Grothendieck-Messing, the
liftings of $X$ correspond to liftings of the Hodge-filtration,
\begin{equation}\label{sfGM1e}
\begin{CD}
\mathbb{D}(X)_{R,\psi} & @>>> & L_{\psi}\\
@VVV & & @VVV\\
\mathbb{D}(X)_{S,\psi} & @>>> & \Lie_{\psi} X \, .
\end{CD}
\end{equation}  
If $\psi \neq \psi_0$ we have no choice for $L_{\psi}$ because the
Proposition \ref{sfO2p} requires $L =
\mathbb{D}(X)_{R,\psi}/\pi^{a_{\psi}}\mathbb{D}(X)_{R,\psi}$. As a
special case this holds 
also for $X'$. Now let $\psi = \psi_0$. Let
$\bar{Q}_R$ and $\bar{Q}_S$ the kernels of the two horizontal maps in
(\ref{sfGM1e}). Then we have $\pi^{a_{\psi}}\mathbb{D}(X)_{R,\psi}
\supset \bar{Q}_R \supset \pi^{a_{\psi}+1}\mathbb{D}(X)_{R,\psi}$. 
We replace $\bar{Q}_R$ by $\bar{Q}'_R :=
\pi^{-a_{\psi}}\bar{Q}_R$. This makes sense because we have a bijection 
\begin{displaymath}
\mathbb{D}(X)_{R,\psi}/\pi\mathbb{D}(X)_{R,\psi}
\overset{\pi^{a_{\psi}}}{\longrightarrow}
\pi^{a_{\psi}}\mathbb{D}(X)_{R,\psi}/\pi^{a_{\psi}+1}\mathbb{D}(X)_{R,\psi}. 
\end{displaymath}
Then $L' = \mathbb{D}(X)_{R,\psi}/\bar{Q}'_R$ is a lifting of
$\Lie_{\psi} X'$ which defines a lifting of the special formal
$\SuetO_D$-module $X'$. This sets up the desired bijection of liftings. 
\end{proof}
\begin{corollary}\label{uniupto}
Let $k$ be an algebraically closed field which is at the same time a $\kappa_E$-algebra. Any two $r$-special formal $O_D$-modules over $k$ are isogenous by a $O_D$-linear isogeny. 
\end{corollary}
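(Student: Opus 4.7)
The plan is to reduce to Drinfeld's classical uniqueness result for special formal $O_D$-modules via the equivalence of categories just established.

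First, I would apply Theorem \ref{equivofcat} to $R=k$. Since $k$ is a field, its nilradical is zero and in particular nilpotent, so the theorem provides an equivalence $\mathcal{C}_{r,k}\to\mathcal{C}_{\mathbf{0},k}$ between the category of $r$-special formal $O_D$-modules over $k$ and the category of special formal $O_D$-modules (in Drinfeld's sense) over $k$. Given two $r$-special formal $O_D$-modules $X_1,X_2$ over $k$, I would pass to the corresponding special formal $O_D$-modules $X_1',X_2'$ via this equivalence.

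Next, I would invoke Drinfeld's classical uniqueness result recalled in the introduction: over $\Spec\bar k$, there is a unique special formal $O_D$-module up to $O_D$-linear isogeny. Applied to $X_1'$ and $X_2'$, this produces an $O_D$-linear quasi-isogeny $X_1'\to X_2'$.

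The remaining point is to show that this quasi-isogeny transports back to one between $X_1$ and $X_2$. This is automatic because the construction $\mathcal{P}\mapsto\mathcal{P}'$ of Theorem \ref{equivofcat} leaves the underlying $W(k)$-module $P$ unchanged and only rescales the Frobenius maps by the (invertible, after inverting $p$) factors $\pi^{a_\psi}$; hence an $O_D$-linear quasi-isogeny of displays $\mathcal{P}_1'\to\mathcal{P}_2'$ is, by the inverse formulas $F_\psi=\pi^{-a_\psi}F_\psi'$ and $\dot F_\psi=\pi^{-a_\psi}\dot F_\psi'$, also an $O_D$-linear quasi-isogeny $\mathcal{P}_1\to\mathcal{P}_2$. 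The main (very mild) obstacle is simply to confirm that the equivalence of Theorem \ref{equivofcat} extends to the isogeny categories, which is clear from the explicit formulas \eqref{Eisenstein7e} in the Dieudonné-module case that covers $R=k$.
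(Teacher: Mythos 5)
Your proposal is correct and follows the same route as the paper: apply Theorem \ref{equivofcat} to reduce to the Drinfeld case $r=r^\circ$, then invoke Drinfeld's uniqueness up to $O_D$-linear isogeny for special formal $O_D$-modules (the paper cites \cite{D}, \S 2 and \cite{BC}, Prop.~5.2). Your extra verification that the quasi-isogeny transports back — because the construction $\mathcal{P}\mapsto\mathcal{P}'$ keeps the underlying module and merely rescales $F$ and $V$ by powers of $\pi$, which any $O_D$-linear map commutes with — is a correct filling-in of a step the paper leaves implicit.
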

\begin{proof}
By Theorem \ref{equivofcat}, we are reduced to the case of special formal $O_D$-modules, i.e., the case $r=r^\circ$. In this case, the assertion  follows from \cite{D}, \S 2, comp. \cite{BC}, Prop.~5.2. 
\end{proof}

Let $\breve{E}$ the completion of the maximal unramified extension of
$E$. Its residue class field $\bar{k}$ is an algebraic closure of
$\kappa_E$. We fix an $r$-special  formal $O_D$-module $(\BX,
\iota_\BX)$ over $\bar{k}$ (a {\it framing object}). 
 
\begin{definition}\label{calMr}
 We  define the set-valued functor $\CM_r$ on the category
 of $O_{\breve{E}}$-schemes as follows\footnote{We will prove in Proposition \ref{identform} that this definition coincides with the one in section \ref{s:formu}.}.   Then $\CM_r$
 associates to scheme $S\in (Sch/ O_{\breve{E}})$ the set of
 isomorphism classes of triples $(X, \iota, \rho)$. Here $(X, \iota)$
 is an $r$-special formal $O_D$-module over 
 $S$, and $\rho: X \times_{\Spec O_{\breve{E}}} \Spec \bar{k} \to \BX\times_{\Spec
   \bar{k}} S$ is a $O_D$-linear  isogeny of height zero. 
\end{definition} 
We write $\overline{\CM_r}$ for the restriction of this functor to
$\bar{k}$-schemes $S$.  
Theorem \ref{equivofcat} now implies the following corollary. 
\begin{corollary}\label{specfibscheme}
The functor $\overline{\CM}_r$ is representable by a scheme over $\bar
k$ which is isomorphic to $\overline{\CM}_{r^\circ}$.   Hence there is
an isomorphism $\overline{\CM}_{r}\simeq
\hat{\Omega}^n\otimes_{O_{\breve E}} \bar k$.  
\end{corollary}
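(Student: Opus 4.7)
The plan is to reduce this corollary entirely to Theorem \ref{equivofcat} together with Drinfeld's original theorem \eqref{Urthm}. First, I would observe that the equivalence of categories $\mathcal{C}_{r,R} \to \mathcal{C}_{\mathbf{0},R}$ constructed in Theorem \ref{equivofcat} has a very rigid form: at the level of displays it preserves the underlying pair $(P,\iota)$ and only modifies $Q$ and the operators $F,\dot{F}$ by multiplication by the central elements $\pi^{a_\psi}$ on each $\psi$-component, cf.\ \eqref{RamFunkt1e}. Since $\pi^{a_\psi}$ commutes with every $O_D$-linear morphism, a morphism of displays $\phi\colon \mathcal{P}_1\to\mathcal{P}_2$ in the source is literally the same underlying $W(R)$-linear map as its image in the target.

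Next, I would apply this equivalence to the framing object $(\BX,\iota_\BX)$ to produce a special formal $O_D$-module $(\BX',\iota_{\BX'})$ over $\bar k$, which by Corollary \ref{uniupto} is $O_D$-linearly isogenous to the Drinfeld framing object and hence may be used to define $\overline{\CM}_{r^\circ}$. For any $\bar k$-algebra $R$ with nilpotent nilradical, I would then define a natural transformation $\overline{\CM}_r(R)\to \overline{\CM}_{r^\circ}(R)$ by sending $(X,\iota,\varrho)$ to $(X',\iota',\varrho')$, where $(X',\iota')$ is the image of $(X,\iota)$ under the equivalence and $\varrho'$ is the quasi-isogeny corresponding to $\varrho$ (well-defined since the equivalence acts as the identity on $W(R)$-linear maps, and is faithful on homomorphism spaces). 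The main point to check is that the height of $\varrho$ is preserved, so that height-zero quasi-isogenies correspond to height-zero quasi-isogenies; this follows from the perfect-field description \eqref{Eisenstein7e} of the equivalence, since the length of the cokernel of a $W(R)$-linear map is unaffected by multiplying $F,V$ by central elements. Theorem \ref{equivofcat} then promotes this to a bijection of sets, yielding an isomorphism of functors $\overline{\CM}_r\simeq\overline{\CM}_{r^\circ}$ on this class of test rings; one extends to all $\bar k$-schemes by the sheaf property.

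Finally, one invokes Drinfeld's theorem \eqref{Urthm}, which identifies the Drinfeld moduli functor $\CM=\CM_{r^\circ}$ (defined over $O_{\breve F}=O_{\breve E}$, since $E=F$ when $r=r^\circ$) with $\hat\Omega_F^n\hat\otimes_{O_F}O_{\breve F}$. Restricting to the special fiber gives $\overline{\CM}_{r^\circ}\simeq \hat\Omega_F^n\otimes_{O_F}\bar k=\hat\Omega_F^n\otimes_{O_{\breve E}}\bar k$, and composition yields the claimed representability. I expect the main obstacle to be the height-preservation check: a careful accounting of how the Frobenius twist by $\pi^{a_\psi}$ interacts with the global notion of height of a quasi-isogeny is needed, though once one works pointwise over perfect fields it becomes transparent and then spreads out to the general base by flatness.
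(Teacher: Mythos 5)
Your proposal is correct and follows essentially the same route as the paper: the paper's own proof simply cites Theorem \ref{equivofcat} for the isomorphism $\overline{\CM}_{r}\simeq\overline{\CM}_{r^\circ}$ and Drinfeld's theorem \cite{D} for representability and the identification with $\hat{\Omega}^n\otimes_{O_{\breve E}}\bar k$. The extra details you supply (that the equivalence fixes the underlying module $P$ and hence acts as the identity on quasi-isogenies, so heights are preserved, and that the image of the framing object serves as a framing object for $\overline{\CM}_{r^\circ}$) are exactly the routine verifications the paper leaves implicit.
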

\begin{proof}
The isomorphism $\overline{\CM}_{r}\simeq\overline{\CM}_{r^\circ}$ follows from Theorem \ref{equivofcat}. The last assertion follows from \cite{D}, which also implies that $\overline{\CM}_{r^\circ}$ is a scheme. 
\end{proof}

\section{The local model }\label{s:localmod}

In this section we consider the local structure of the formal scheme
$\CM_r$ (Definition \ref{calMr}). By the general theory \cite{PRS},
this comes down to considering the {\it local model} of $\CM_r$.  Let
us define it.  

Recall that $D$ is the central divison algebra  with invariant $1/n$
over $F$. Let $V$ be a $D$-vector space of dimension $1$ and let
$\Lambda$ be an  $O_D$-lattice in $V$. 

The local model in question represents the following
functor on $(\rm{Sch}/O_E)$:
\begin{equation*}
\begin{aligned}
 \BM_r(S) = \{ \CF \subset \Lambda \otimes_{\BZ_p} \CO_S \mid  &\text{ $O_D$-stable   $\CO_S$-submodule, 
locally on $S$ a direct summand, }\\
&\text{ such that $ (\Lambda \otimes_{\BZ_p} \CO_S)/\CF$ satisfies conditions $({\bf R}_r)$ and $({\bf E}_r)$}\}.
 \end{aligned}
\end{equation*}

\begin{lemma}\label{genfiber}
The functor $\BM_r$ is representable by a projective scheme over $\Spec O_E$. The geometric generic fiber is isomorphic to $\BP^{n-1}$. 

Let $S$ be an $E$-scheme. Consider a $\CO_S$-submodule $\CF \subset
\Lambda \otimes_{\BZ_p} \CO_S$ which is locally a direct summand and 
which is $O_D$-stable. Then   $(\Lambda \otimes_{\BZ_p}
\CO_S)/\CF$ satisfies conditions $({\bf R}_r)$ and $({\bf E}_r)$ if and only if it satisfies the condition $({\bf K}_r)$.
\end{lemma}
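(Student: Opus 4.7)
The plan is to prove the three assertions in turn, with the equivalence of conditions over $E$-schemes doing most of the work.

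For representability, I realize $\BM_r$ as a closed subscheme of the Grassmannian $\Gr$ of $\CO_S$-submodules $\CF \subset \Lambda \otimes_{\BZ_p} \CO_S$ which are locally direct summands of total rank $n^2 d - n\sum_\varphi r_\varphi$ (the value forced by $({\bf R}_r)$). This Grassmannian is projective over $\Spec O_E$. The condition of $O_D$-stability is closed, being given by vanishing of the composition $O_D \otimes \CF \to \Lambda \otimes \CO_S \to (\Lambda \otimes \CO_S)/\CF$. The rank condition $({\bf R}_r)$ is both open and closed (as noted after Definition \ref{rkcond1d}), hence closed. The Eisenstein conditions $({\bf E}_r)$ are closed, being given by the vanishing of explicit endomorphisms and of one exterior power. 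Hence $\BM_r$ is a closed, in particular projective, subscheme of $\Gr$.

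For the equivalence of conditions over an $E$-scheme $S=\Spec R$, I pass to the faithfully flat extension $R' = R \otimes_E \tilde E$ and use Morita equivalence. Since $D \otimes_{\BQ_p} \tilde E \cong \prod_{\varphi \in \Phi} M_n(\tilde E)$, the quotient $L = (\Lambda \otimes \CO_S)/\CF$ decomposes over $R'$ as $L \otimes_R R' = \bigoplus_\varphi L_\varphi$, where $L_\varphi$ is an $M_n(R')$-module on which $\iota(\pi)$ acts by the scalar $\varphi(\pi)$; the scalars $\{\varphi(\pi)\}_{\varphi \in \Phi}$ are pairwise distinct in $\tilde E$. By Morita there is a locally free $R'$-module $W_\varphi$ of rank $\leq n$ with $L_\varphi \cong (R')^n \otimes_{R'} W_\varphi$, and for any $x \in O_D$ one has ${\rm char}(\iota(x)|L_\varphi)(T) = \varphi({\rm chard}(x))(T)^{\rank W_\varphi}$. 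In this reformulation the Kottwitz condition is equivalent to $\rank W_\varphi = r_\varphi$ for every $\varphi$, while the Eisenstein condition forces $W_\varphi = 0$ for $\varphi \in B_\psi$ (since $Q_{A_\psi}(\varphi(\pi))$ is a unit in $R'$ for such $\varphi$) together with $\rank W_{\varphi_0} \leq 1$ (from the $\wedge^{n+1}$-clause, as the image of $Q_{A_{\psi_0}}(\iota(\pi))$ on $L_{\psi_0}$ is exactly $L_{\varphi_0} \cong W_{\varphi_0}^n$). Combining these Eisenstein bounds with the rank condition $\sum_{\varphi|\psi} \rank W_\varphi = r_\psi$ and the upper bound $\rank W_\varphi \leq n$, the only solution is $\rank W_\varphi = r_\varphi$ everywhere. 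So $({\bf K}_r) \Leftrightarrow ({\bf R}_r) \wedge ({\bf E}_r)$ over $R'$, and the equivalence descends to $R$ by faithful flatness since all three conditions are stable under base change.

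For the geometric generic fiber, I apply the Morita decomposition over $\bar E$. The analysis above shows $\CF = \bigoplus_\varphi \CF_\varphi$ with $\CF_\varphi$ rigid for $\varphi \neq \varphi_0$ (equal to $V_\varphi$ if $r_\varphi = 0$, to $0$ if $r_\varphi = n$), while $\CF_{\varphi_0}$ is a left ideal of $V_{\varphi_0} \cong M_n(\bar E)$ of $\bar E$-dimension $n(n-1)$; equivalently, via Morita, an $(n-1)$-dimensional subspace of $\bar E^n$. The parameter space for such subspaces is $\Gr(n-1, n) \cong \BP^{n-1}$. The main obstacle I anticipate is justifying that the Morita-theoretic dimension analysis, which is transparent at geometric points, really characterizes the closed subscheme scheme-theoretically; the resolution is to perform the whole analysis over the flat extension $R \to R \otimes_E \tilde E$, where the pointwise rank statements give an exact decomposition of $L$ into locally free $R'$-modules $W_\varphi$ of forced rank.
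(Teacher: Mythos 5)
Your proof is correct and follows essentially the same route as the paper's: representability by checking that all conditions are closed conditions on a Grassmannian, and the equivalence $({\bf K}_r)\Leftrightarrow({\bf R}_r)\wedge({\bf E}_r)$ together with the identification of the geometric generic fiber via the Morita/eigenspace decomposition of $\Lambda\otimes_{\BZ_p}\CO_S$ after the flat base change to $\tilde E$ (the paper works over $\bar E$ and disposes of the direction $({\bf R}_r)\wedge({\bf E}_r)\Rightarrow({\bf K}_r)$ by citing the eigenvalue-multiplicity argument of Proposition \ref{flatimplies}, but the substance is identical). One minor slip that does not affect the argument: the scalars $\varphi(\pi)$, $\varphi\in\Phi$, need not be pairwise distinct (take $F/\BQ_p$ unramified and $\pi=p$); what you actually use, and what is true, is that they are pairwise distinct within each fiber $\Phi_\psi$, being the roots of the separable polynomial $Q_\psi(T)$.
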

\begin{proof} The first assertion is obvious since the rank condition
  is a closed condition, cf. the remark after Definition
  \ref{rkcond1d}.
  
   The implication $``\Rightarrow"$ in the last assertion follows as in Proposition \ref{flatimplies}. To show the converse, let
  $R$ be a $\bar \BQ_p$-algebra and $S=\Spec R$. Let $\CF$ be a direct
  summand of $\Lambda\otimes_{\BZ_p}R$  that is $O_D$-stable and such
  that $ (\Lambda \otimes_{\BZ_p} \CO_S)/\CF$ satisfies $({\bf
    K}_r)$. There are decompositions 
\begin{equation*}
\Lambda\otimes_{\BZ_p}R=\bigoplus\nolimits_\varphi \Lambda_\varphi\, ,\quad \CF=\bigoplus\nolimits_\varphi \CF_\varphi\, , 
\end{equation*}
where $\varphi$ runs through the embeddings of $F$ into $\bar \BQ_p$. 
Here $O_F$ acts on the summand corresponding to $\varphi$ via
$\varphi:F\to \bar\BQ_p\to R$. Each summand is stable under the action of
$D$. The condition  $({\bf K}_r)$  just says that
$\rank\,\Lambda_\varphi/ \CF_\varphi=nr_\varphi$, in which case
$\Lambda_\varphi/\CF_\varphi$ is locally on $S$ isomorphic to the
direct sum of $r_\varphi$ copies of  the simple representation
$F^n\otimes_\varphi R$  of $D\otimes_{F, \varphi} R\simeq {\rm
  M}_n(R)$. On the summand $\Lambda_\varphi/\CF_\varphi$, $\iota(\pi)$
acts as $\varphi(\pi)\,{\rm Id}_{\CF_\varphi}$. Let $\psi=\psi_0$. It
then follows that $Q_{A_{\psi_0}}(\iota(\pi))$ annihilates all
summands $\Lambda_\varphi/\CF_\varphi$, for those $\varphi$ with
$\varphi_{\vert_{F^t}}=\psi_0$ and  $\varphi\neq\varphi_0$, and
$Q_{A_{\psi_0}}(\iota(\pi))$ induces an isomorphism on
$\Lambda_{\varphi_0}/\CF_{\varphi_0}$, which implies the second
Eisenstein condition. The first and third Eisenstein conditions are
proved in an analogous way.  

For $\varphi$ with $\varphi_{\vert_{F^t}}\neq \psi_0$, the subspace
$\CF_\varphi$ is trivial, i.e., either equal to $(0)$ or to
$\Lambda_\varphi$. On the other hand, using  Morita equivalence, the
${\rm M}_n(R)$-stable summand $\CF_{\varphi_0}$ of
$\Lambda_{\varphi_0}$ corresponds to a hyperplane of
$F^n\otimes_{F,\varphi_0} R$.  It now follows that the geometric
generic fiber of $\BM_r$ is isomorphic to the projective space of
lines in $\bar \BQ_p^n$, i.e., to $\BP^{n-1}$ (Grothendieck's convention).  
\end{proof}

The geometric special fiber $\overline{\BM}_r=\BM_r\otimes_{O_E} \bar
k$ can be described as follows. Let $W_\psi = \Lambda \otimes
_{O_{F^t}, \psi} \bar k$, an $en^2$-dimensional vector space   
with its endomorphism $\Pi = \iota (\Pi)$. Let $S=\Spec\, R$, for a
$\bar k$-algebra $R$, and let $(\CF_\psi)_{\psi\in \Psi}$ be a point
in $\overline{\BM}_r(S)$. Let first $\psi\neq \psi_0$. By the third
Eisenstein condition, $\CF_\psi$ is a direct summand of rank
$(e-a_\psi) n^2$ containing the image  of $\Pi^{a_\psi n}$. Since
these two submodules are direct summands of the same rank, they are
equal.  

Now let $\psi=\psi_0$, and set $W_0=W_{\psi_0}$ and $a_0=a_{\psi_0}$. Then, due to the action of $O_{\tilde F}$, we obtain a $\BZ/n$-grading 
\begin{equation}
W_0=\bigoplus\nolimits_{k\in \BZ/n} W_{0, k},
\end{equation}
and $\Pi$ is an endomorphism of degree one. Forgetting the subspaces $\CF_\psi$ with $\psi\neq \psi_0$, we have an identification 
\begin{equation*}
\begin{aligned}
 \overline{\BM}_r (S)  = \{ \CF_0 \subset W_0 \otimes_{\bar k} &\CO_S \mid \Pi
\text{-stable    {\it graded} direct summand,}\\
&{\rm rank} \, (W_{0,k, S}/\CF_{0, k}) = a_0 n + 1 \, ,\forall k\in\BZ/n,
\text{ and $1')$ and $2')$}\}\, .
\end{aligned}
 \end{equation*}

Here we have set $W_{0, S}=W_0\otimes_{\bar k}\CO_S$ and $W_{0,k, S}=W_{0, k}\otimes_{\bar k}\CO_S$, and $1')$ and $2')$  are as follows: 
\begin{equation}\label{cond12}
\begin{aligned}
1')  &\quad \Pi^{(a_0+1)n}\vert ({  W_{0, S}/ \CF_0})  &= 0 \, \\
2') &\bigwedge^{n+1} (\Pi^{a_0n}{\vert (W_{0, S}/\CF_0}))&= 0 .
\end{aligned}
\end{equation}
Of course, we have used here \eqref{Eisenstein2e}.

Let us now apply  Lemma \ref{Eisen2l} to the $\CO_S$-dual $W_{0, S}^*$, its endomorphism $f$ induced by $(\Pi^*)^{a_0n}$ and its submodule $V=(W_{0, S}/\CF_0)^*$, in which case 
$\rank W_{0, S}^*=en^2$, and $\rank V=(a_0n+1)n$, and $r=a_0n^2$, and $s=n$. We conclude that $\Ker (\Pi^*)^{a_0n}\otimes_{\bar k}\CO_S\subset V$. Translated back into $\CF_0$, we obtain a chain of inclusions of direct summands of $W_{0, S}$,
\begin{equation}
\Im (\Pi^{(a_0+1)n})\otimes_{\bar k}\CO_S\subset \CF_0\subset \Im (\Pi^{a_0n})\otimes_{\bar k}\CO_S \, .
\end{equation}
In the Drinfeld case $r = r^{\circ}$ we have $a_0 = 0$. Let us write  $
\BM^\circ$ for $
\BM_{r^\circ}$. 


Let us identify $\Im\Pi^{(a_0+1)n}/\Im\Pi^{a_0n}$ with
$W_0/\Pi W_0$. Associating now to an
$S$-valued point $\CF_0$ of  
 $\overline{\BM}_r$ the locally direct summand 
$$\CF_0/\Im (\Pi^{(a_0+1)n})\otimes_{\bar k}\CO_S\subset  (\Im \Pi^{a_0 n}/\Im \Pi^{(a_0+1)n})\otimes_{\bar k}\CO_S=(\Lambda\otimes_{O_F, \varphi_0}\bar k)\otimes_{\bar k}\CO_S\, ,
$$ we have obtained an $S$-valued point of the local model $
\BM^\circ$, more precisely of $ \BM^\circ\otimes_{O_F,
  \varphi_0}O_{E}\otimes_{O_{E}}\bar k$. Letting $S$
vary, this induces obviously an isomorphism of schemes over $\bar k$,  
\begin{equation}\label{isospfiber}
\BM_r\otimes_{O_E}\bar k\simeq  \BM^\circ\otimes_{O_F}\bar k . 
\end{equation}
Therefore we obtain from the Drinfeld case:
\begin{corollary}\label{locmodred}
 The geometric special fiber $\BM_r\otimes_{O_E}\bar k$ is a reduced
 scheme, which has $n$ irreducible components, all of which have
 dimension $n-1$. Furthermore, the local rings of closed points of
 $\BM_r\otimes_{O_E}\bar k$ are isomorphic to localizations in closed
 points of the $\bar k$-algebra $\bar
 k[X_1,\ldots,X_n]/(X_1\cdot\ldots\cdot X_n)$. \qed 
\end{corollary}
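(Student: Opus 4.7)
The plan is to reduce the statement entirely to the Drinfeld case $r=r^\circ$ and then invoke the classical description of the special fiber of the Drinfeld local model. By the isomorphism \eqref{isospfiber} established just above, we have
\[
\BM_r\otimes_{O_E}\bar k \;\simeq\; \BM^\circ\otimes_{O_F}\bar k ,
\]
so every geometric property of the left hand side can be read off from the right hand side. All three assertions --- reducedness, the count and dimension of irreducible components, and the structure of local rings at closed points --- are preserved under this isomorphism of schemes, so it is enough to establish them for $\BM^\circ\otimes_{O_F}\bar k$, i.e. in the Drinfeld case where $a_0=0$.

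It therefore suffices to recall the structure of the special fiber of the Drinfeld local model. By Drinfeld's theorem \eqref{Urthm}, the formal scheme $\hat\Omega^n_F\hat\otimes_{O_F}O_{\breve F}$ represents the Drinfeld moduli problem, and via the general local model diagram (cf. \cite{PRS}) it is étale locally isomorphic to $\BM^\circ\otimes_{O_F}O_{\breve F}$. In particular, reducedness, the number and dimension of irreducible components, and the completed local rings at closed points are the same on both sides.

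The geometry of the Deligne--Drinfeld--Mumford formal scheme $\hat\Omega^n_F$ is classical: its special fiber is a reduced union of $n$ irreducible components, each of dimension $n-1$ and isomorphic to an iterated blow-up of $\BP^{n-1}_{\bar k}$, meeting in a normal crossings pattern. The most degenerate closed points have completed local rings of the form
\[
\bar k[[X_1,\ldots,X_n]]/(X_1\cdots X_n),
\]
while at points lying on only $k<n$ components one obtains $\bar k[[X_1,\ldots,X_n]]/(X_1\cdots X_k)$; in both cases these are localizations of the affine ring $\bar k[X_1,\ldots,X_n]/(X_1\cdots X_n)$ at its closed points. Transferring this description back via \eqref{isospfiber} yields all three assertions of the corollary.

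The only potential obstacle is verifying that the étale-local identification with $\hat\Omega^n_F$ transfers each of the listed properties simultaneously, but this is automatic since reducedness, irreducible components, their dimensions, and completed local rings are all étale-local invariants; hence no further argument is needed beyond the two inputs \eqref{isospfiber} and the classical description of $\hat\Omega^n_F$.
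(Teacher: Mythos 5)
Your opening reduction is exactly the paper's: via the isomorphism \eqref{isospfiber} everything comes down to the special fiber of the Drinfeld local model $\BM^\circ$. The problem is in how you then handle the Drinfeld case. You describe the special fiber of $\hat\Omega^n_F$ as ``a reduced union of $n$ irreducible components,'' and this is false: the special fiber of the Deligne--Drinfeld--Mumford formal scheme is an \emph{infinite} union of irreducible components (each a successive blow-up of $\BP^{n-1}_{\bar k}$), indexed by the vertices of the Bruhat--Tits building of $\PGL_n(F)$ --- for $n=2$ it is already an infinite tree of $\BP^1$'s. More fundamentally, even with a correct description of $\hat\Omega^n_F$ in hand, your transfer mechanism cannot deliver the component count: the local model diagram identifies $\hat\Omega^n_F\hat\otimes_{O_F}O_{\breve F}$ and $\BM^\circ$ only smooth-locally (equivalently, on completed local rings), and the number of irreducible components is not a local invariant. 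Indeed the two schemes genuinely differ in this respect: infinitely many components on the moduli side, exactly $n$ on the local model side. Your claim that ``irreducible components \dots are all \'etale-local invariants'' is the step that fails. A further, smaller gap: to recover the local rings at \emph{every} closed point of $\BM^\circ\otimes\bar k$ from the moduli space you would also need to know that the local model morphism hits every closed point of the special fiber of $\BM^\circ$ (non-emptiness of all strata), which is true here but is an extra input you do not address.

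The intended argument goes the other way around: the structure of $\BM^\circ\otimes_{O_F}\bar k$ is quoted directly from the theory of the standard Iwahori local model for $(\GL_n,\mu_{(1,0^{(n-1)})})$ (cf.\ the remark following the corollary and \cite{G}), where one sees by explicit computation with the lattice chain that the special fiber is reduced, is the union of exactly $n$ Schubert-type components of dimension $n-1$ (one for each position in the chain where the line determining $\CF$ can sit), and has local rings at closed points as in the statement. That description is then transported to $\BM_r\otimes_{O_E}\bar k$ by \eqref{isospfiber}; it is $\hat\Omega^n_F$ whose local structure is deduced from the local model, not the reverse.
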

\begin{remark}
We point out that $\BM^\circ$ coincides with the {\it standard local model} for the triple $(G, \{\mu\}, K)$ consisting of $\GL_n$ and $\mu_{(1, 0^{(n-1)})}$ and the Iwahori subgroup, after extension of scalars to $\Spec O_{\breve E}$, cf. \cite{G}, comp. also \cite{PR.I, PRS}. 
\end{remark}
\begin{corollary} \label{locmodflat}
$\BM_r$ is flat over $O_E$.
\end{corollary}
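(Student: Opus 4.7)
The plan is to verify flatness locally at each closed point $x \in \BM_r$. Since $O_E$ is a DVR with uniformizer $\pi_E$ and $\BM_r$ is projective over $O_E$, this amounts to showing that $\pi_E$ is a non-zerodivisor on the completed local ring $A := \widehat{\mathcal O}_{\BM_r, x}$.

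By Corollary \ref{locmodred}, $A/\pi_E A$ is isomorphic to $\bar k[[X_1, \ldots, X_n]]/(X_1 \cdots X_n)$, a reduced Cohen--Macaulay hypersurface of dimension $n-1$. Lifting the $X_i$ to elements $\widetilde X_i \in A$ and combining Nakayama with Cohen's structure theorem, one obtains a surjection $O_{\breve E}[[Y_1, \ldots, Y_n]] \twoheadrightarrow A$ sending $Y_i \mapsto \widetilde X_i$. Since $\widetilde X_1 \cdots \widetilde X_n$ vanishes modulo $\pi_E$, we may write $\widetilde X_1 \cdots \widetilde X_n = \pi_E u$ with $u \in A$, and choosing a lift $\widetilde u$ the surjection factors as
\[
O_{\breve E}[[Y_1,\ldots,Y_n]] \twoheadrightarrow B := O_{\breve E}[[Y_1,\ldots,Y_n]]/(Y_1 \cdots Y_n - \pi_E \widetilde u) \twoheadrightarrow A.
\]
The ring $B$ is a hypersurface in a regular local ring of dimension $n+1$, hence Cohen--Macaulay of dimension $n$, and the defining element is a non-zerodivisor in $O_{\breve E}[[Y_1,\ldots,Y_n]]$, making $B$ flat over $O_{\breve E}$. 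By construction $B \twoheadrightarrow A$ is an isomorphism modulo $\pi_E$.

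The central task is to upgrade $B \twoheadrightarrow A$ to an isomorphism. For this it suffices to show $\dim A = n$: the generic fiber of $\BM_r$ has dimension $n-1$ (Lemma \ref{genfiber}), so $\dim A = n$ is equivalent to saying that $x$ lies on an irreducible component of $\BM_r$ dominating $\Spec O_E$. Given this, the surjection $B \twoheadrightarrow A$ is between Cohen--Macaulay local rings of the same dimension $n$, and since after absorbing $\widetilde u$ into a change of coordinates the element $Y_1 \cdots Y_n - \pi_E \widetilde u$ is irreducible in the UFD $O_{\breve E}[[Y_1, \ldots, Y_n]]$ (so that $B$ is a domain), the kernel vanishes.

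The main obstacle is thus to rule out irreducible components of $\BM_r$ contained in the special fiber, i.e., to show that every irreducible component of $\BM_r \otimes_{O_E} \bar k$ arises as the specialization of a generic point of $\BM_r$. This is where the isomorphism \eqref{isospfiber} identifying $\BM_r \otimes \bar k$ with the Drinfeld local model $\BM^\circ \otimes_{O_F} \bar k$ enters, combined with Drinfeld's flatness theorem for $\BM^\circ$ (equivalently, the flatness of $\hat \Omega^n_F$ over $O_F$): every irreducible component of the special fiber of $\BM^\circ$ is hit by a specialization from the generic fiber, and one exhibits an analogous lift for $\BM_r$ using its moduli description in terms of $O_D$-stable direct summands $\CF \subset \Lambda \otimes_{\BZ_p} \CO_S$ satisfying the Eisenstein and rank conditions. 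With this in hand, $\dim A = n$ at every closed point $x$, and flatness of $\BM_r$ over $O_E$ follows.
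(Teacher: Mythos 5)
Your reduction correctly isolates the crux: one must rule out irreducible components of $\BM_r$ contained in the special fiber, i.e., show that every irreducible component of $\BM_r\otimes_{O_E}\bar k$ lies in the closure of the generic fiber. But this is exactly the step you do not carry out. The phrase ``one exhibits an analogous lift for $\BM_r$ using its moduli description'' is the entire difficulty: the isomorphism \eqref{isospfiber} identifies only the \emph{special} fibers of $\BM_r$ and $\BM^\circ$, and these schemes live over different bases ($O_E$ versus $O_F$) with no comparison morphism over the integers, so Drinfeld's flatness theorem for $\BM^\circ$ says nothing about which components of $\BM_r\otimes_{O_E}\bar k$ are reached by specialization from the generic fiber of $\BM_r$. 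The paper closes this gap differently: properness of $\BM_r$ together with the equality of dimensions of the generic and special fibers forces at least one of the $n$ components of the special fiber to lie in the closure of the generic fiber, and a $\BZ/n$-action on $\BM_r\otimes_{O_E}O_{\breve E}$ (cyclically shifting the $\tilde\psi$-graded pieces of $\CF$ via $\tau$), shown to permute the components transitively via \eqref{isospfiber}, then handles all of them. Reducedness of the special fiber (Corollary \ref{locmodred}) converts this into flatness by the criterion of \cite{GW}, Prop.~14.16. Some argument of this kind must be supplied; as written, your proof assumes its own key input.

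There is also a secondary gap in the local-algebra reduction. Your conclusion that $B=O_{\breve E}[[Y_1,\dots,Y_n]]/(Y_1\cdots Y_n-\pi_E\widetilde{u})$ is a domain requires $\widetilde{u}$ to be a unit: if, say, $\widetilde{u}\in (Y_1)$, the defining element factors and $B$ is not a domain, so the dimension count no longer forces $\Ker(B\to A)=0$. Nothing in the construction guarantees that $\widetilde{u}$ is a unit --- that statement is essentially the semi-stability of $\BM_r$, which the authors explicitly remark they cannot prove. (A smaller imprecision: Corollary \ref{locmodred} gives localizations of $\bar k[X_1,\dots,X_n]/(X_1\cdots X_n)$ at arbitrary closed points, so the completed local ring of the special fiber is $\bar k[[X_1,\dots,X_n]]/(X_{i_1}\cdots X_{i_m})$ for the subset of components through the chosen point, not always the full product.) Both issues are avoided by replacing the explicit presentation with the flat-closure argument: the closure $Z$ of the generic fiber is flat and proper, $Z\otimes\bar k$ is closed of dimension $n-1$ inside the reduced, pure $(n-1)$-dimensional $\BM_r\otimes\bar k$, so once every component is known to lie in $Z$ one gets $Z=\BM_r$ by Nakayama --- but this again returns you to the missing step above.
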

\begin{proof} Since the special fiber is reduced, it suffices by \cite{GW},
 Prop.~14.16 to show that a generic point of an irreducible component
 of the special fiber is in the closure of the general fibre. 
Since the general fiber and the special fibre of $\BM_r$ have the same
dimension $n-1$ and since $\BM_r$ is proper it follows that at least
one irreducible component of the special fiber is contained in the
closure of the generic fiber. The claim therefore follows from the
following lemma. 
\end{proof}
\begin{lemma} There is an action of $\BZ/n$ on $\BM_r\otimes_{O_E}O_{\breve E}$, which induces a  transitive action on the set of irreducible components of  $\BM_r\otimes_{O_E}\bar k$. 
\end{lemma}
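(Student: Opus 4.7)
The plan is to construct the $\BZ/n$-action via a reformulation of $\BM_r$ as a moduli of periodic lattice chains, on which a cyclic shift is manifest, and then to identify the induced action on the geometric special fiber with the classical Drinfeld cyclic action.

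First, since $\Pi\in D^\times$ normalizes $O_D$, right multiplication by $\Pi$ is a left-$D$-linear automorphism of $V$ carrying $\Lambda$ onto $\Pi\Lambda\subsetneq\Lambda$; iterating yields the periodic chain of $O_D$-lattices $\ldots\supset\Lambda_{-1}\supset\Lambda_0=\Lambda\supset\Lambda_1\supset\ldots$ with $\Lambda_i=\Pi^i\Lambda$ and $\Lambda_{i+n}=\pi\Lambda_i$, all pairwise isomorphic as abstract $O_D$-modules. I would introduce the auxiliary functor $\widetilde{\BM}_r$ on $(\mathrm{Sch}/O_E)$ parametrizing periodic compatible chains $(\CF_i\subset\Lambda_i\otimes_{\BZ_p}\CO_S)_{i\in\BZ}$ of $O_D$-stable $\CO_S$-direct summands, each satisfying the rank condition $({\bf R}_r)$ and the Eisenstein condition $({\bf E}_r)$ relative to $\Lambda_i$, with compatibilities $\CF_{i+1}\subset\CF_i$ (under $\Lambda_{i+1}\subset\Lambda_i$) and the periodicity $\CF_{i+n}=\pi\CF_i$. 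On $\widetilde{\BM}_r$ the cyclic shift $(\CF_i)\mapsto(\CF_{i+1})$ provides a manifest $\BZ$-action which, by periodicity, factors through $\BZ/n$.

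Second, I would prove that the natural projection $\widetilde{\BM}_r\to\BM_r$, $(\CF_i)\mapsto\CF_0$, is an isomorphism of functors; the inverse sends $\CF\in\BM_r(S)$ to the chain $\CF_i:=\CF\cap(\Lambda_i\otimes\CO_S)$ (equivalently, $\Pi^i\CF$ inside $\Lambda_i\otimes\CO_S$). The content is to verify that each such intersection is in fact an $\CO_S$-direct summand satisfying $({\bf R}_r)$ and $({\bf E}_r)$ with respect to $\Lambda_i$. The $O_D$-stability of $\CF$ (which gives $\Pi^i\CF\subset\CF$) and the filtration structure of the $\Lambda_i$ are what make this work. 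Granting this, the $\BZ/n$-action transports to $\BM_r$, and by base change to $\BM_r\otimes_{O_E}O_{\breve E}$.

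Third, for transitivity on irreducible components of $\BM_r\otimes_{O_E}\bar k$, I would invoke the isomorphism \eqref{isospfiber} $\BM_r\otimes_{O_E}\bar k\simeq\BM^\circ\otimes_{O_F}\bar k$ with the special fiber of the Drinfeld local model. Under this identification the chain $\CF_i$ corresponds precisely to the standard periodic lattice chain of Drinfeld's moduli, and our $\BZ/n$-action to the classical cyclic action on $\BM^\circ\otimes\bar k$. That action is well-known to permute the $n$ irreducible components transitively, since these components are in bijection with the vertices of a fundamental alcove in the $\GL_n(F)$-affine Bruhat--Tits building, and the shift acts by rotation of the alcove, cf.\ \cite{D}.

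The main obstacle is the second step: showing that $\widetilde{\BM}_r\to\BM_r$ is an isomorphism. The difficulty is that for an arbitrary $\CO_S$-direct summand $\CF\subset\Lambda\otimes\CO_S$, there is no a priori reason for $\CF\cap(\Lambda_i\otimes\CO_S)$ to be a direct summand of $\Lambda_i\otimes\CO_S$, so one must fully exploit the combination of $O_D$-stability, the rank condition, and the Eisenstein condition. A natural strategy is to check the isomorphism first on the geometric special fiber (where \eqref{isospfiber} reduces it to the classical Drinfeld case) and then to propagate to general $S$ either by a deformation-theoretic argument or by direct inspection of the explicit local equations at closed points furnished by Corollary \ref{locmodred}.
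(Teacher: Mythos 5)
Your construction of the $\BZ/n$-action is left incomplete at exactly the point you flag yourself: the claim that the chain functor $\widetilde{\BM}_r\to\BM_r$, $(\CF_i)\mapsto\CF_0$, is an isomorphism is never proved, and your candidate inverse is problematic on two counts. First, $\CF\cap(\Lambda_i\otimes_{\BZ_p}\CO_S)$ and $\Pi^i\CF$ are not ``equivalent'' in general: over a non-reduced base the intersection of a direct summand with $\Lambda_i\otimes_{\BZ_p}\CO_S$ need not be a direct summand at all, and $\Pi^i\CF$ is only contained in that intersection. Second, even granting a well-defined chain, you would still have to check that each $\CF_i$ satisfies $({\bf R}_r)$ and $({\bf E}_r)$ relative to $\Lambda_i$, which you do not do. Note also that the lemma is precisely the input to the flatness statement (Corollary \ref{locmodflat}), so an argument that defers the key step to ``deformation theory'' or to the local equations of Corollary \ref{locmodred} risks circularity: those descriptions concern only the special fiber, whereas the action has to be defined over $O_{\breve E}$.

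The paper's proof avoids the lattice-chain detour entirely by defining the action on a single $\CF$: writing $\CF=\oplus_{\tilde\psi\in\tilde\Psi}\CF_{\tilde\psi}$ for the grading by the $O_{\tilde F^t}$-action, one sets $(\CF')_{\tilde\psi}=\CF_{\tilde\psi\tau}$, with $\tau$ the Frobenius from \eqref{OD1e}. The verification is then a one-liner: the Eisenstein conditions depend only on the $O_F$-module structure of $(\Lambda\otimes_{\BZ_p}\CO_S)/\CF$, which is unchanged, and the rank condition is preserved because $r_{\tilde\psi}$ depends only on the restriction $\tilde\psi|_{F^t}$, so that $\sum_{\tilde\psi|_{F^t}=\psi}\rank\CF_{\tilde\psi\tau}=\sum_{\tilde\psi|_{F^t}=\psi}\rank\CF_{\tilde\psi}=r_\psi$. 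Transitivity on irreducible components then follows, as in your third step, from the Drinfeld case via the $\BZ/n$-equivariant isomorphism \eqref{isospfiber}; but as written your third step presupposes that your action matches the classical cyclic action under \eqref{isospfiber}, which cannot be checked until the action on $\BM_r\otimes_{O_E}O_{\breve E}$ has actually been constructed.
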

\begin{proof} The action is given by sending $\CF=\oplus_{\tilde\psi\in\tilde\Psi}\CF_{\tilde\psi}$ to $\CF'$ with 
$$
 (\CF')_{\tilde\psi}=\CF_{\tilde\psi\tau} \, ,\quad \tilde\psi\in\tilde\Psi\, . 
$$
Here $\tau$ is taken from the presentation of $O_D$ in \eqref{OD1e}. Indeed, since the $O_F$-structure of $(\Lambda \otimes_{\BZ_p} \CO_S)/\CF'$ coincides with that of $(\Lambda \otimes_{\BZ_p} \CO_S)/\CF$, the Eisenstein conditions are satisfied for $(\Lambda \otimes_{\BZ_p} \CO_S)/\CF'$, since they are satisfied for $(\Lambda \otimes_{\BZ_p} \CO_S)/\CF$. On the other hand, for any $\psi\in\Psi$, 
$$
\rank (\CF')_{\psi} = \sum_{\tilde\psi \in \tilde\Psi_{\psi}} \rank (\CF')_{\tilde\psi}=\sum_{\tilde\psi \in \tilde\Psi_{\psi}} \rank \CF_{\tilde\psi\tau}=\sum_{\tilde\psi \in \tilde\Psi_{\psi}} \rank \CF_{\tilde\psi}=r_\psi .
$$
Therefore, $\CF'$ also satisfies the condition $({\bf R}_r)$. 

That the action of $\BZ/n$  on the set of irreducible components of  $\BM_r\otimes_{O_E}\bar k$ is transitive, follows from the corresponding fact for $ \BM^\circ$ (the Drinfeld case) (the isomorphism \eqref{isospfiber} is obviously equivariant for the action of $\BZ/n$). 
\end{proof}
\begin{corollary}
The scheme $\BM_r$ is normal. 
\end{corollary}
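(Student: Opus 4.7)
The plan is to verify Serre's criterion $(R_1)+(S_2)$; both conditions fall out of the two preceding corollaries.

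For $(S_2)$ I would show that $\BM_r$ is Cohen--Macaulay. By Corollary \ref{locmodflat} it is flat over the regular one-dimensional base $\Spec O_E$, so it suffices to check Cohen--Macaulayness of the fibers. The generic fiber is $\BP^{n-1}$ by Lemma \ref{genfiber}, and by Corollary \ref{locmodred} the local rings at closed points of the special fiber are localizations of $\bar k[X_1,\ldots,X_n]/(X_1\cdots X_n)$, i.e.\ complete intersections in a regular ring. Both fibers are therefore Cohen--Macaulay, and Cohen--Macaulayness lifts to the total space along a flat map with CM fibers over a regular base. In particular $\BM_r$ satisfies $(S_2)$.

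For $(R_1)$ I would verify that $\CO_{\BM_r,\eta}$ is regular at every codimension-one point $\eta$. A point in the generic fiber is handled by the smoothness of $\BP^{n-1}_E$. For a point in the special fiber the key observation is that $\BM_r$ has dimension $n$, because it is flat over the DVR $O_E$ with equidimensional special fiber of dimension $n-1$ (Corollary \ref{locmodred}); hence any codimension-one point of $\BM_r$ contained in the special fiber must be the generic point of an irreducible component of $\BM_r\otimes_{O_E}\bar k$. At such an $\eta$ the quotient $\CO_{\BM_r,\eta}/(\pi)$ is the stalk of the special fiber at its generic point $\eta$, which is zero-dimensional and reduced by Corollary \ref{locmodred}, hence a field. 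Therefore the maximal ideal of $\CO_{\BM_r,\eta}$ is generated by $\pi$, so $\CO_{\BM_r,\eta}$ is a discrete valuation ring.

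There is no genuine obstacle here: the normality assertion is a formal consequence of reducedness of the special fiber (packaged in Corollary \ref{locmodred}) together with flatness (Corollary \ref{locmodflat}), via Serre's criterion. All the substantive geometry was already done in proving those two corollaries by transfer to the Drinfeld local model $\BM^\circ$.
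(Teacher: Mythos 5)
Your proof is correct, but it takes a more self-contained route than the paper, which simply observes that $\BM_r$ is flat over $O_E$ with normal (indeed regular) generic fiber and reduced special fiber, and then invokes the general statement that these three properties imply normality for a scheme over a discrete valuation ring (citing Pappas--Zhu, Prop.~9.2). That general proposition is itself proved by Serre's criterion, but it only needs $(S_1)$ of the special fiber (i.e.\ reducedness) to propagate $(S_2)$ to the total space via $\depth\CO_{\BM_r,x}=\depth\CO_{\BM_r\otimes\kappa_E,x}+1$; you instead exploit the explicit description of the local rings of the special fiber as localizations of the hypersurface $\bar k[X_1,\ldots,X_n]/(X_1\cdots X_n)$ to get full Cohen--Macaulayness of the fibers and hence of $\BM_r$. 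This is a genuinely stronger intermediate conclusion (Cohen--Macaulayness of $\BM_r$, not just $(S_2)$), obtained at the cost of using more of Corollary~\ref{locmodred} than the paper does; your $(R_1)$ argument at the generic points of the components of the special fiber is exactly the one hidden inside the cited proposition. One small point of hygiene: Corollaries~\ref{locmodred} and \ref{locmodflat} concern the \emph{geometric} special fiber, so you should note that reducedness and Cohen--Macaulayness descend along the faithfully flat extension $\kappa_E\to\bar k$, and that for a scheme of finite type over a field it suffices to check Cohen--Macaulayness at closed points; with those routine remarks added, the argument is complete.
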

\begin{proof}
Indeed, $\BM_r$ is flat over $O_E$, with  normal generic fiber  (even regular, cf. Lemma \ref{genfiber}), and reduced special fiber. These properties imply that $\BM_r$ is normal, cf. \cite{PZ}, Prop. 9.2. 
\end{proof}
 
 \begin{remark} If the Conjecture \ref{conjDr} were true, it would follow that $\BM_r$ has semi-stable reduction, in particular $\BM_r$ would be regular. However, we are unable to prove these stronger assertions. 
 \end{remark}

Let $\CM_r'$ be the closed formal  subscheme of $\CM_r$ which is given by the
Kottwitz condition $(\mathbf{K}_r)$. 
 By Corollary \ref{K_rR_r} the
 special fibers of $\CM_r$ and $\CM'_r$ are identical.
 \begin{proposition}\label{identform}
   The two formal schemes $\CM_r$ and $\CM'_r$ are identical. Both are
   $p$-adic and flat over $\Spf {O_{\breve E}}$, with special fiber
   $\CM_r\times_{\Spf O_{\breve E}}\Spec \bar k$ a reduced scheme. All
   their completed local rings are normal.
 \end{proposition}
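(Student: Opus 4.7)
The plan is to deduce all four assertions from the results on the local model $\BM_r$ established in Section \ref{s:localmod}, via the local model formalism of \cite{PRS}, and then to obtain the equality $\CM_r = \CM_r'$ by a flatness argument.

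First, I would invoke the local model diagram
\begin{equation*}
\CM_r \xleftarrow{\;\varphi\;} \widetilde{\CM}_r \xrightarrow{\;\psi\;} \BM_r \, \hat{\otimes}_{O_E} O_{\breve E},
\end{equation*}
where both $\varphi$ and $\psi$ are formally smooth and surjective. Such a diagram exists because the moduli problem defining $\CM_r$ cuts out, via $(X,\iota) \mapsto (\Lie X, d\iota)$, exactly the same Hodge-filtration data on the universal extension that is parametrized by $\BM_r$ inside the $O_D$-stable Grassmannian of $\Lambda\otimes_{\BZ_p}\CO_S$; this identification is precisely what the conditions $(\mathbf{R}_r)$ and $(\mathbf{E}_r)$ are designed to enforce. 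Any property that is smooth-local on source and target then transfers from $\BM_r$ to $\CM_r$. Applying this to Corollary \ref{locmodflat} (flatness over $O_E$), Corollary \ref{locmodred} (reducedness of the special fiber), and the normality statement following Corollary \ref{locmodflat}, together with the fact that $\BM_r$ is of finite type over the excellent Noetherian ring $O_E$ (so that normality passes to completed local rings), we conclude that $\CM_r$ is flat over $\Spf O_{\breve E}$, has reduced special fiber, and has normal completed local rings. The $\pi$-adic (equivalently $p$-adic) property of $\CM_r$ is inherited from $\BM_r\,\hat{\otimes}_{O_E} O_{\breve E}$, which is $\pi$-adic since $\BM_r$ is a projective $O_E$-scheme and we complete along its special fiber.

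Next, to show $\CM_r' = \CM_r$, note that $\CM_r' \hookrightarrow \CM_r$ is a closed immersion by definition. On the special fiber, Corollary \ref{K_rR_r} asserts that over any $\kappa_E$-algebra the Drinfeld condition $(\mathbf{D}_r)$ implies the Kottwitz condition $(\mathbf{K}_r)$, so the two closed formal subschemes of $\CM_r$ have the same special fiber. On the generic fiber, Proposition \ref{flatimplies} gives the same implication over any $p$-adic integral domain of characteristic zero, so $\CM_r'$ and $\CM_r$ also agree generically. Having already established that $\CM_r$ is flat over $\Spf O_{\breve E}$, we know $\CM_r$ has no $\pi$-torsion and hence no embedded components in its special fiber; it is therefore the scheme-theoretic closure of its generic fiber. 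A closed formal subscheme agreeing with it on the generic fiber must coincide with it, which gives $\CM_r' = \CM_r$.

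The main obstacle in this plan is the first step: verifying that the local model formalism of \cite{PRS} applies in the present setting, i.e. constructing the formally smooth surjections $\varphi$ and $\psi$ and identifying the target as $\BM_r\,\hat{\otimes}_{O_E} O_{\breve E}$. Concretely, one must match up the Hodge-filtration picture coming from Grothendieck--Messing theory with the functor $\BM_r$ defined in terms of $O_D$-stable direct summands of $\Lambda\otimes_{\BZ_p}\CO_S$, and check that the conditions $(\mathbf{R}_r)$ and $(\mathbf{E}_r)$ pull back to the defining conditions of $\CM_r$ on both sides. Once this identification is in place, the remaining assertions follow formally from the corresponding properties of $\BM_r$ and from a standard closure argument over the DVR $O_{\breve E}$.
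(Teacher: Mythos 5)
Your proposal is correct and follows essentially the same route as the paper: the local model diagram transfers flatness and normality of the completed local rings from $\BM_r$ (Corollaries \ref{locmodflat} and \ref{locmodred}), and the equality $\CM_r'=\CM_r$ is deduced from flatness together with Proposition \ref{flatimplies}. The only cosmetic differences are that the paper obtains reducedness of the special fiber from Corollary \ref{specfibscheme} rather than from the local model, and applies Proposition \ref{flatimplies} directly to the completed local rings (normal, hence domains) instead of running your closure-of-the-generic-fiber argument.
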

 \begin{proof}
   We use the {\it local model diagram}
  $$
  \xymatrix{
    & {\wt\CM}_r \ar[dl]_-{\varphi} \ar[dr]^{\psi}\\
    \CM_r & & \widehat{\BM}_r \, , }
$$ 
where $\wh{\BM}_r$ denotes the formal completion of
$\BM_r\times_{\Spec O_E}\Spec O_{\breve E}$ along its special
fiber. Here ${\wt\CM}_r$ and the morphism $\varphi$ is obtained from
$\CM_r$ by adding to $(X, \iota, \rho)$ an
$O_D\otimes_{\BZ_p}\CO_S$-linear isomorphism with the value at $S$ of
the covariant crystal associated to $X$,
$$
\alpha\colon \Lambda\otimes_{\BZ_p}\CO_S\lra\BD(X)_S \, .
$$ 
The morphism $\psi$ associates to $(X, \iota, \rho, \alpha)$ the
submodule $\CF=\alpha^{-1}(\Ker(\BD(X)\to\Lie X))$ of
$\Lambda\otimes_{\BZ_p}\CO_S$. The theory of local models \cite{RZ}
tells us that the completed local ring of a point $x\in\CM_r$ is
isomorphic to the completed local ring of $\psi(\tilde x)$, where
$\tilde x$ is any point of ${\wt\CM}_r$ mapping under $\varphi$ to
$x$. Hence all completed local rings of points of $\CM_r$ are
isomorphic to completed rings of points of $\BM_r\times_{\Spec
  O_E}\Spec O_{\breve E}$. Hence by Corollaries \ref{locmodflat} and
\ref{locmodred}, the formal scheme $\CM_r$ is flat over $\Spf
O_{\breve E}$ with all completed local rings normal. That
$\CM_r\times_{\Spf O_{\breve E}}\Spec \bar k$ is a reduced scheme
follows from Corollary \ref{specfibscheme}. Now the equality of
$\CM'_r$ and $\CM_r$ follows from Proposition \ref{flatimplies}.
\end{proof}

\begin{corollary}\label{indepofpi}
The definition of $\CM_r$ is independent of the choice of the uniformizer $\pi$ of $O_F$.
\end{corollary}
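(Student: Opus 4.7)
The plan is to leverage Proposition \ref{identform} directly. The dependence on the choice of uniformizer enters only through the Eisenstein condition $(\mathbf{E}_r)$ used to define $\CM_r$; the Kottwitz condition $(\mathbf{K}_r)$ is manifestly independent of $\pi$, since its formulation \eqref{charpolDr} involves only the reduced characteristic polynomial and the fixed embeddings $\varphi$ of $F$. Therefore the closed formal subscheme $\CM'_r \subset \CM_r$ cut out by $(\mathbf{K}_r)$ is intrinsically attached to the datum $(D, r)$.

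First, I would observe that Proposition \ref{identform} asserts $\CM_r = \CM'_r$ as formal schemes inside the ambient moduli space of $p$-divisible $O_D$-modules with framing. Although the inclusion $\CM'_r \subset \CM_r$ uses the Eisenstein condition (via the definition of an $r$-special formal $O_D$-module, Proposition \ref{flatimplies}, and flatness of $\CM_r$), the identification as sets of triples $(X,\iota,\varrho)$ satisfying $(\mathbf{K}_r)$ versus $(\mathbf{D}_r)$ is an identification of the same underlying functor.

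Next I would let $\pi_1, \pi_2$ be two uniformizers of $O_F$ and write $\CM_r^{(\pi_i)}$ for the functor defined using $(\mathbf{E}_r)$ with respect to $\pi_i$. By Proposition \ref{identform} applied to each choice, both $\CM_r^{(\pi_1)}$ and $\CM_r^{(\pi_2)}$ coincide with the formal subscheme defined by $(\mathbf{K}_r)$, which does not reference $\pi$. Hence $\CM_r^{(\pi_1)} = \CM_r^{(\pi_2)}$ as functors, proving the corollary.

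There is no real obstacle here: all the work has been done in establishing the chain Kottwitz $\Rightarrow$ Eisenstein on flat families (Proposition \ref{flatimplies}), Eisenstein + rank $\Rightarrow$ Kottwitz on reduced bases (Lemma \ref{KC0l}/Corollary \ref{K_rR_r}), and finally the flatness and normality of $\CM_r$ which upgrades these set-theoretic coincidences to an equality of formal schemes. The corollary is thus essentially a formal consequence of Proposition \ref{identform}.
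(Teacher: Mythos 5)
There is a genuine gap. Your key step asserts that, by Proposition \ref{identform}, ``both $\CM_r^{(\pi_1)}$ and $\CM_r^{(\pi_2)}$ coincide with the formal subscheme defined by $(\mathbf{K}_r)$, which does not reference $\pi$.'' But Proposition \ref{identform} does not say this. There, $\CM'_r$ is defined as the closed formal subscheme \emph{of} $\CM_r$ cut out by $(\mathbf{K}_r)$, i.e.\ the locus where $(\mathbf{K}_r)$ holds \emph{in addition to} $(\mathbf{D}_r)$; the proposition shows that this extra condition is automatic on $\CM_r$, so $\CM_r$ sits as a closed formal subscheme of the moduli space $\CN_r$ defined by the Kottwitz condition alone. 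It does \emph{not} show $\CM_r=\CN_r$: the reverse implication $(\mathbf{K}_r)\Rightarrow(\mathbf{E}_r)$ fails over non-reduced $\kappa_E$-algebras in the ramified case --- this is precisely why the Eisenstein conditions are imposed in the first place (``the Kottwitz condition is too weak''). So after applying Proposition \ref{identform} you only know $\CM_r^{(\pi_1)}$ and $\CM_r^{(\pi_2)}$ are two a priori different closed formal subschemes of the same $\pi$-independent ambient space $\CN_r$, which is not yet the conclusion.

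The missing ingredient is a flat-closure argument inside $\CN_r$ (or rather inside its local model $\BN_r$): by Lemma \ref{genfiber} the generic fibers of $\BN_r$, $\BM_r^{(\pi_1)}$ and $\BM_r^{(\pi_2)}$ all coincide (in the generic fiber, $(\mathbf{K}_r)$ is equivalent to $(\mathbf{R}_r)+(\mathbf{E}_r)$ for either uniformizer), and by Corollary \ref{locmodflat} both $\BM_r^{(\pi_1)}$ and $\BM_r^{(\pi_2)}$ are flat over $O_E$. A closed subscheme of $\BN_r$ that is flat over the discrete valuation ring $O_E$ equals the schematic closure of its generic fiber, so both local models are the flat closure of the common generic fiber and hence equal; the local model diagram then transports this to $\CM_r^{(\pi_1)}=\CM_r^{(\pi_2)}$. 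Your last paragraph gestures at flatness, but only as an input to Proposition \ref{identform}, not in this decisive role, so the argument as written does not close.
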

\begin{proof} Consider the formal scheme $\CN_r$ that represents the moduli problem where the Kottwitz condition $({\bf K}_r)$ is imposed but the Eisenstein conditions $({\bf E}_r)$ are dropped.  Let $\tilde\pi$ be another uniformizer, and let $\tilde\CM_r$ be the corresponding formal scheme defined using the Eisenstein condition for $\tilde\pi$ instead of $\pi$. What has to be shown is that the formal subschemes $\CM_r$ and $\tilde\CM_r$ of $\CN$ are identical. Let $\BN_r$ be the local model corresponding to $\CN_r$; then the local models $\BM_r$ and $\tilde\BM_r$ of $\CM_r$ and $\tilde\CM_r$ are closed subschemes of $\BN_r$. It suffices to prove that $\BM_r=\tilde\BM_r$. But  by Lemma \ref{genfiber} the generic fibers of $\BN_r, \BM_r$ and $\tilde\BM_r$ all coincide and,  by  Corollary \ref{locmodflat},   $\BM_r$ and $\tilde\BM_r$ are equal to the flat closure of the generic fiber inside $\BN_r$. 

\end{proof}

\section{The generic fiber (after Scholze)}\label{s:genfiber}
In this section, we prove the last point in Theorem \ref{mainDR}, in the following form. For convenience, we introduce for a function $r\colon\varphi\mapsto r_\varphi$ the formal scheme $\tilde{\CM}_r$  over $\Spf O_{\breve{E}}$ that represents the same moduli problem as $\CM_r$, except that we drop the condition that the height of $\rho$ be zero. Then our original formal scheme 
$\CM_r$ is an open and closed formal subscheme of $\tilde{\CM}_r$.

Let $r^{\circ}$ be the Drinfeld  function, i.~e., $r^{\circ}_{\varphi} = 0, \forall \varphi \neq \varphi_0$. We write $\tilde{\CM}^{\circ} = \tilde{\CM}_{r^{\circ}}$.

We will prove the following theorem. We use the embedding $\breve F\hookrightarrow \breve E$ defined by the  natural map $\breve F=F\otimes_{F^t}\breve \BQ_p\overset{\varphi_0\otimes\id}{\longrightarrow} E\otimes_{E^t}\breve \BQ_p=\breve E$. 

\begin{theorem}\label{maingenfib}
 There is an isomorphism of adic spaces over $\rm{Spa} (\breve{E}, O_{\breve{E}})$,
 \begin{equation*}
  (\tilde{\CM}_r)^{ad} \simeq (\tilde{\CM}^{\circ}
  \hat{\otimes}_{O_{\breve{F}}} O_{\breve{E}})^{ad} 
 \end{equation*}
\end{theorem}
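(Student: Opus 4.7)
The plan is to follow Scholze-Weinstein and identify $(C, O_C)$-points of both adic spaces (for complete algebraically closed nonarchimedean fields $C/\breve E$) with the same explicit parameter space, then upgrade the pointwise bijection to an isomorphism of adic spaces using Scholze's relative Hodge-Tate period map.

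For the pointwise comparison, take $(X, \iota, \rho) \in \tilde{\CM}_r(O_C)$. By Scholze-Weinstein, $X$ is determined by its integral Tate module $T_p X$ together with the Hodge-Tate kernel $W \subset T_p X \otimes_{\BZ_p} C$. The quasi-isogeny $\rho$ identifies $V_p X$ with the fixed $D$-module $V_p \BX$, so the datum becomes an $O_D$-stable lattice $T \subset V_p \BX$ together with a $D \otimes_{\BQ_p} C$-submodule $W \subset V_p \BX \otimes_{\BQ_p} C$. Using the splitting $D \otimes_{\BQ_p} \tilde{E} \cong \prod_{\varphi \in \Phi} M_n(\tilde{E})$ together with Morita equivalence factor by factor, one writes $W = \bigoplus_{\varphi} W_\varphi$, and the Kottwitz condition $(\mathbf{K}_r)$ translates into $W_\varphi$ having Morita dimension $n - r_\varphi$ inside a standard $n$-dimensional $C$-space. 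For $\varphi \neq \varphi_0$ the value $r_\varphi \in \{0, n\}$ forces $W_\varphi$ to be uniquely either $0$ or everything; only $W_{\varphi_0}$ is free, ranging over hyperplanes in a fixed $n$-dimensional $C$-vector space. This parametrization depends only on $\varphi_0$ and is visibly identical to the one produced in the Drinfeld case $r = r^\circ$; the isogeny condition on $\rho$ then cuts out Drinfeld's period domain $\hat\Omega^n_F(C) \subset \BP^{n-1}(C)$ on both sides.

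To globalize, I would invoke Scholze's construction of the relative Hodge-Tate period map for $p$-divisible groups over perfectoid test rings, which realizes the Hodge-Tate filtration as a $D$-equivariant morphism of adic spaces into the relevant Grassmannian in a universal way. Combined with Drinfeld's theorem for $\tilde{\CM}^\circ$ and the uniqueness of the framing object up to $O_D$-linear isogeny (Lemma \ref{uniqueframe}), functoriality of these period maps produces the desired isomorphism $(\tilde{\CM}_r)^{ad} \simeq (\tilde{\CM}^\circ \hat{\otimes}_{O_{\breve F}} O_{\breve E})^{ad}$. The main obstacle is exactly this globalization: upgrading a pointwise bijection to an adic-analytic isomorphism requires controlling the variation of the Hodge-Tate filtration in $D$-equivariant families, which is precisely what Scholze-Weinstein's theory of $p$-divisible groups over $O_C$ (and more generally over integral perfectoid bases) provides. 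A subsidiary subtlety is handling the non-height-zero components of $\tilde{\CM}_r$, which requires the comparison of integral lattice structures on the Tate modules to respect the $O_D$-action; this follows from the fully faithful nature of the Scholze-Weinstein correspondence.
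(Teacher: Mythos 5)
Your overall strategy---compare $(C,O_C)$-points via Scholze--Weinstein and then globalize through period maps---is the right shape, but the proposal has a genuine gap at its center: the pointwise comparison as you describe it does not actually establish the bijection. Listing the possible $O_D$-stable filtrations $W=\bigoplus_\varphi W_\varphi$ and observing that only $W_{\varphi_0}$ varies only identifies the two \emph{period domains} $\Fb\cong\Fb^\circ\otimes_F E$; it does not show that the \emph{admissible loci} (the images of the two period morphisms) coincide. A point of $\tilde\CM_r(O_C)$ is not just a filtration: it is a $p$-divisible group together with a quasi-isogeny of its special fiber to the framing object $\BX$, and which filtrations arise depends on the isocrystal of $\BX$. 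The framing objects for $r$ and for $r^\circ$ have genuinely different isocrystals: the paper normalizes them so that $F=\pi^m F^\circ$ with $m=\#\{\varphi\mid r_\varphi=n\}$. Your sentence ``the isogeny condition on $\rho$ then cuts out Drinfeld's period domain on both sides'' asserts exactly the statement that needs proof. The paper's mechanism for it is the Fargues--Fontaine curve: $\BM_\infty(C)$ is identified with $D$-linear modifications $f\colon\CF_F\to\CE_F$ of vector bundles on $Y_F$ supported on $\psi^{-1}(\{\infty\})$ with prescribed fiberwise cokernel dimensions $r_\varphi n$; the twist $\CE_F=\CE_F^\circ\otimes\CL^m$ is then undone by multiplying with the Lubin--Tate sections $LT_\varphi$ (for $\varphi$ with $r_\varphi=n$), which identifies $\CE_F^\circ$ with the subsheaf $\CG_F\subset\CE_F$ of sections vanishing at those $\infty_\varphi$ and sets up the bijection $\BM^\circ_\infty(C)=\BM_\infty(C)$ compatibly with the period maps. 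Nothing in the Hodge--Tate-kernel description by itself substitutes for this step, because that description forgets the quasi-isogeny datum.

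The globalization is also more specific than ``functoriality of the relative Hodge--Tate period map'' (note moreover that the relevant map here is the crystalline/de Rham period map $\pi_{K_0}\colon\BM_{K_0}\to(\Fb\otimes_E\breve E)^{ad}$, not the Hodge--Tate one). The paper's argument is: (a) the period morphisms are partially proper, so equality of their images can be checked on $C$-points, which is what the Fargues--Fontaine comparison gives; and (b) since $K_0=G(\BQ_p)^\circ$, the fibers $G(\BQ_p)^\circ/K_0$ are single points, so on each fixed-height component $\pi_{K_0}$ and $\pi^\circ_{K_0}$ are open immersions into the \emph{same} adic flag variety; equality of images then literally is the desired isomorphism. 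Without an argument of this kind, a bijection on $C$-points does not upgrade to an isomorphism of adic spaces.
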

This theorem implies the last point in Theorem \ref{mainDR}. Indeed, passing to the open and closed sublocus where the
universal quasi-isogeny $\varrho$ has height zero, we obtain a similar isomorphism when $\tilde{\CM}_r$ is replaced by
$\CM_r$ and $\tilde{\CM}^{\circ}$ by $\CM^{\circ}$ (the proof of Theorem \ref{maingenfib} will show that the isomorphism in question
is compatible with the decompositions according to the height). Since by Drinfeld's theorem $(\CM^{\circ})^{ad} \simeq
\Omega^n_F$, we deduce the desired isomorphism
\begin{equation*}
 (\CM_r)^{ad} \simeq \hat\Omega_F \otimes_F \breve{E} = (\hat{\Omega}_F \otimes_{O_F} O_{\breve{E}})^{ad} \, .
\end{equation*}
In the proof of Theorem \ref{maingenfib}, we will use the following notation. We denote by $(\BX, \iota_\BX)$ the framing object for the moduli
problem $\tilde{\CM}_r$. Let $M(\BX)_{\BQ_p}$ be its rational Diendonn\'{e} module.

Let $V$ be a free $D$-module of rank one, and let $\Lambda$ be an
$O_D$-lattice in $V$. Let $G = \GL_D (V)$, considered as a
linear algebraic group over $\BQ_p$. The function $r$ defines a
$G$-homogeneous projective variety $\Fb$ over $E$. If $R$ is a
$\bar{\BQ}_p$-algebra, then $\Fb(R)$ parametrizes $D$-linear
surjective homomorphisms into locally free $R$-modules
\begin{equation*}
  V \otimes_{\BQ_p} R \lra \CF
\end{equation*}
such that under the decomposition $\CF = \bigoplus_{\varphi}
\CF_{\varphi}$, we have $\rank (\CF_{\varphi}) = r_{\varphi} n$. In other words, $\Fb$ coincides with the generic fiber of the local model, $\BM_r\times_{\Spec O_E}\Spec E$, cf. Lemma \ref{genfiber}. 

Let $K_0 \subset G(\BQ_p)$ be the stabilizer of the lattice $
\Lambda$. For any open subgroup $K \subset K_0$, we obtain the
corresponding member $\BM_K$ of the RZ-tower over
$(\tilde{\CM}_r)^{ad}$. These coverings of $(\tilde{\CM}_r)^{ad}$
parametrize level-$K$-structures on the universal object
$X/\tilde{\CM}_r$,
\begin{equation*}
  \alpha :  \Lambda \lra T(X)\,\, \rm{mod} \, K \, .
\end{equation*}
We denote by
\begin{equation*}
  \pi_K : \BM_K \lra (\CF \otimes_E \breve{E})^{ad}
\end{equation*}
the crystalline period maps, which are compatible with changes in $K$.

In \cite{SW} Scholze and Weinstein define a preperfectoid space $\BM_\infty$ by imposing a full level structure on $T(X)$.
In particular, there is a morphism
\begin{equation*}
 \BM_{\infty} \lra \underleftarrow{\lim} \, \BM_K \, ,
\end{equation*}
which induces a bijection for any algebraically closed extension $C$ of $\bar{\BQ}_p$ which is complete for a $p$-adic valuation,
\begin{equation*}
 \BM_{\infty} (C) \simeq \underleftarrow{\lim} \, \BM_K (C) \, .
\end{equation*}
We denote by $\pi_{\infty} : \BM_{\infty} \lra (\Fb \otimes_E \breve{E})^{ad}$ the induced period mapping. In \cite{SW},   the following
description of $\BM_{\infty}(C)$ is given. Let $\CO_C$ be the ring of integers in $C$.

Let $B^+_{\rm cris} = A_{\rm cris} (\CO_C / p) \otimes_{\BZ_p} \BQ_p$ be Fontaine's ring attached to $C$. The Fargues-Fontaine
curve\footnote{The Fargues-Fontaine curve is usually denoted by $X$; since this notation is already in use for the universal $p$-divisible group, we use the notation $Y$ instead.} is defined as $Y = {\rm Proj} \, P$, where $P$ is the graded ring
\begin{equation*}
 P = \bigoplus_{d \geq 0} (B^+_{\rm cris})^{\phi = p^d} \, .
\end{equation*}
Then $Y$ is a connected separated regular noetherian scheme of dimension $1$, equipped with the point $\infty \in Y$ corresponding
to Fontaine's homomorphism
\begin{equation*}
 \theta : B^+_{\rm cris} \lra C \, .
\end{equation*}
In \cite{SW} appears a description of $p$-divisible groups $X$ over $\CO_C$, in terms of two vector bundles, $\CE$ and $\CF$. Here
\begin{itemize}
\item $\CF = T \otimes_{\BZ_p} \CO_Y,$ 

\item  $\CE$ corresponds to the graded $P$-module $\bigoplus_{d \geq 0} (M_{\BQ_p})^{\phi = p^{d+1}}$.
\end{itemize}
Here $T = T (X)$ denotes the Tate module of the generic fibre of $X$, and $M_{\BQ_p} = M(X)_{\BQ_p}$ the rational Dieudonn\'{e} module of the 
reduction modulo $p$ of $X$.

We now apply this description to the fibers of the universal $p$-divisible group $X$ at points of $\BM_{\infty} (C)$,
noting that the universal full level structure induces an isomorphism $T(X) =  \Lambda$, and the universal quasi-isogeny 
an isomorphism of rational Dieudonn\'e modules $M(X)_{\BQ_p} = M(\BX)_{\BQ_p} \otimes_{\breve{\BQ}_p} B^+_{\rm cris}$. Accordingly we set 
$\CF =  \Lambda \otimes_{\BZ_p} \CO_Y$, and let $\CE$ correspond to the graded module
\begin{equation*}
 \big(\bigoplus_{d \geq 0} M (\BX)_{\BQ_p}  \otimes_{\breve{\BQ}_p} B^+_{\rm cris}\big)^{\phi = p^{d+1}} \, .
\end{equation*}
We also fix a $D$-linear isomorphism $M(\BX)_{\BQ_p} = V \otimes_{\BQ_p} {\breve{\BQ}_p}$.
The Scholze-Weinstein description therefore implies the following fact, cf. \cite{SW}, Cor. 6.3.10 (and its extension to the EL-case in Thm. 6.5.4).
\begin{theorem} There is an identification of $\BM_{\infty} (C)$ with the set of injective $D$-linear 
 homomorphisms of vector bundles on $Y$, 
\begin{equation*}
 f : \CF \lra \CE
\end{equation*}
such that ${\rm supp} ( {\Coker} f) = \{ \infty \}$ and such that $\mathfrak{m}_{Y, \infty}$ kills ${\Coker} f$ and such that the induced surjective map
\begin{equation*}
 \CE \otimes_{\CO_{X, \infty}} C = M (\BX)_{\BQ_p} \otimes_{\breve{\BQ}_p} C = V \otimes_{\BQ_p} C \, \buildrel{\varphi_f}\over\longrightarrow \, {\rm Coker} \, f
\end{equation*}
defines a point in $\Fb (C)$.
Furthermore, the period morphism $\pi_{\infty}$ sends the point corresponding to $f$ to $[ V \otimes_{\BQ_p} C \, \buildrel{\varphi_f}\over\longrightarrow \, {\Coker} f ]$.
\end{theorem}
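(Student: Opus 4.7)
The plan is to deduce the theorem from the Scholze--Weinstein classification \cite{SW} of $p$-divisible groups over $\CO_C$ in terms of pairs $(\CE,\CF)$ of vector bundles on the Fargues--Fontaine curve $Y$, by tracing through how the data of a point of $\BM_{\infty}(C)$ translates into such a pair together with extra structure. Concretely, an element of $\BM_{\infty}(C)$ is a quadruple $(X,\iota,\varrho,\alpha)$ over $\CO_C$ consisting of an $r$-special $O_D$-module $X$, a $O_D$-linear quasi-isogeny $\varrho\colon X\times\bar k \to\BX\times\bar k$, and a $O_D$-linear trivialization $\alpha\colon \Lambda\isoarrow T(X)$ of the Tate module. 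Via $\alpha$ and $\varrho$, the SW pair attached to $X$ becomes identified with the pair $(\CE,\CF)$ introduced before the theorem, so that the natural injection of the Tate-module bundle into the crystalline bundle appears as a $D$-linear injection $f\colon \CF\to\CE$.

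First I would recall the content of the SW equivalence: for any $p$-divisible group $X/\CO_C$ the map $T(X)\to M(X)_{\BQ_p}$ induces a canonical injective morphism of vector bundles on $Y$ whose cokernel is supported at the single closed point $\infty$ and killed by $\mathfrak{m}_{Y,\infty}$; conversely, every such injection comes from a unique $X/\CO_C$. Moreover, the fiber of $\CE$ at $\infty$ is canonically $M(X)_{\BQ_p}\otimes_{\breve\BQ_p}C$, and the surjection onto $\Coker f$ is precisely the Hodge filtration $M(X)_{\BQ_p}\otimes C\twoheadrightarrow \Lie X\otimes C$ determined by the crystalline period map. Using $\alpha$ to rewrite $T(X)=\Lambda$ and $\varrho$ to rewrite $M(X)_{\BQ_p}=V\otimes_{\BQ_p}\breve\BQ_p$ in a $D$-equivariant way, the pair $(\CF,\CE)$ becomes exactly the pair of the theorem, and $f$ is $D$-linear.

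Next I would verify that the remaining moduli data match. The $O_D$-action on $X$ is encoded in the $D$-equivariance of $f$; the quasi-isogeny height is unconstrained since we are working with $\tilde\CM_r$ rather than $\CM_r$; and the height of $X$ is determined by $D$-linearity together with the fact that $\Coker f$ is a finite-length $O_{Y,\infty}/\mathfrak{m}_{Y,\infty}=C$-module of the appropriate length. The crucial step is to show that the Kottwitz and Eisenstein conditions on $(X,\iota)$ translate precisely into the condition that the $D$-linear surjection $\varphi_f\colon V\otimes_{\BQ_p}C\twoheadrightarrow \Coker f$ defines a $C$-point of $\Fb$. For this direction, since $X$ is defined over $\CO_C$ and $\CO_C$ is a $p$-adic integral domain with characteristic-$0$ fraction field, Proposition \ref{flatimplies} tells us that the Kottwitz condition $({\bf K}_r)$ holds on $\Lie X\otimes C$; and by Lemma \ref{genfiber} the generic-fiber moduli problem cut out by $({\bf R}_r)+({\bf E}_r)$ coincides with the one cut out by $({\bf K}_r)$, so $\varphi_f$ lands in $\Fb(C)$. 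Conversely, if $\varphi_f\in\Fb(C)$, then the classifications of \cite{SW} yield a $p$-divisible group $X/\CO_C$ with $O_D$-action whose Lie algebra after inverting $p$ satisfies $({\bf K}_r)$, and the flat closure inside $\tilde\CM_r$ (Corollary \ref{indepofpi}, Proposition \ref{identform}) upgrades this to a point of $\BM_\infty(C)$.

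Finally, the description of $\pi_{\infty}$ follows from the SW construction itself: by construction, the crystalline period map sends $X$ to the Hodge filtration on $M(X)_{\BQ_p}\otimes C$, which under our identifications is precisely the quotient $\varphi_f\colon V\otimes_{\BQ_p}C \twoheadrightarrow \Coker f$. The main obstacle is the matching of moduli conditions with the target $\Fb$; once Proposition \ref{flatimplies} and Lemma \ref{genfiber} are in place this is almost immediate, but it is what allows us to use only the (simpler) Kottwitz condition at the generic level and still recover the full integral moduli problem. The EL version of this identification is exactly \cite{SW}, Thm.~6.5.4, applied to $G=\GL_D(V)$ with cocharacter class $\{\mu\}$ defined by $r$.
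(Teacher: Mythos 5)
Your proposal is correct and follows the same route as the paper, which in fact offers no written proof of this statement beyond the remark that it is a direct consequence of the Scholze--Weinstein classification (\cite{SW}, Cor.~6.3.10 and its EL-extension Thm.~6.5.4) applied to the universal object with its level structure and quasi-isogeny. Your additional step of matching the integral conditions $(\mathbf{R}_r)+(\mathbf{E}_r)$ over $\CO_C$ with the condition $\varphi_f\in\Fb(C)$ via Proposition \ref{flatimplies} and Lemma \ref{genfiber} is exactly the verification the paper leaves implicit, and it is sound since $\Lie X$ is free over the domain $\CO_C$, so these conditions may be checked after inverting $p$.
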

It will be convenient to reformulate this last description.
Let $Y_F = Y \times_{\Spec\BQ_p} \Spec F$. This is a finite \'{e}tale cover $\psi : Y_F \lra Y$ of degree $d$, and the fiber $\psi^{-1}(\{ \infty \})$ can be
identified with $\{ \infty_\varphi \mid \varphi : F \lra \bar{\BQ}_p \}$. Since the vector bundles $\CF$ and $\CE$ are equipped with $F$-actions,
they are of the form $\CF = \psi_* (\CF_F)$ and $\CE = \psi_* (\CE_F)$, with vector bundles $\CF_F$ and $\CE_F$ over $Y_F$.
\begin{corollary}
 There is a natural identification of $\BM_{\infty} (C)$ with the set of $D$-linear injective morphisms of vector bundles on $Y_F$, 
\begin{equation*}
 f_F : \CF_F \lra \CE_F \, ,
\end{equation*}
such that ${\rm supp} \, {\Coker} f_F \subset \psi^{-1} (\{\infty\})$, with ${\Coker} f_F$ killed by $\mathfrak{m}_{Y_F, \infty_\varphi} \, \forall \varphi$,
and such that
\begin{equation*}
\dim_C ({\Coker} f_F)_{\infty_\varphi} = r_{\varphi} \cdot n \, , \, \forall \varphi \, .
\end{equation*}
Furthermore, the period morphism $\pi_{\infty}$ sends the point corresponding to $f_F$ to the family of surjections $[\CE_F \otimes_{Y_F, \infty_\varphi} C 
= V \otimes_{F, \varphi} C \lra ({\Coker} f_F)_{\infty_\varphi}]$, considered as a point in $\Fb (C)$.
\end{corollary}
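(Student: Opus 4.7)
The plan is to obtain this corollary as a direct translation of the preceding theorem under the equivalence of categories between quasi-coherent sheaves on $Y_F$ and $F$-equivariant quasi-coherent sheaves on $Y$ provided by the finite \'etale pushforward $\psi_\ast$. First I would observe that since $\CF=\psi_\ast\CF_F$ and $\CE=\psi_\ast\CE_F$, any $F$-linear homomorphism $f\colon\CF\to\CE$ comes uniquely from a homomorphism $f_F\colon\CF_F\to\CE_F$ by adjunction; the $D$-action factors through $F$ on both bundles in a manner compatible with this adjunction, so $D$-linearity of $f$ is equivalent to $D$-linearity of $f_F$, and injectivity on $Y$ is equivalent to injectivity on $Y_F$ because $\psi_\ast$ is exact and faithful on a finite \'etale cover.

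Next I would translate the support and torsion conditions. Since $\psi^{-1}(\{\infty\})=\{\infty_\varphi\mid\varphi\colon F\to\bar\BQ_p\}$ and $\psi$ is finite, the equality $\mathrm{supp}(\mathrm{Coker}\,f)=\{\infty\}$ corresponds to $\mathrm{supp}(\mathrm{Coker}\,f_F)\subset\psi^{-1}(\{\infty\})$. The condition that $\mathfrak{m}_{Y,\infty}$ kills $\mathrm{Coker}\,f$ means that $\mathrm{Coker}\,f$ is a $C$-vector space at $\infty$; pulling this back under the \'etale map and using the decomposition of $\CO_{Y,\infty}\otimes_{\BQ_p}F\simeq\prod_\varphi\CO_{Y_F,\infty_\varphi}$, this is equivalent to each $\mathfrak{m}_{Y_F,\infty_\varphi}$ killing $\mathrm{Coker}\,f_F$ separately.

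For the dimension condition I would use the canonical $F\otimes_{\BQ_p}C=\prod_\varphi C$-decomposition
\[
\mathrm{Coker}\,f \;=\; \bigoplus_\varphi(\mathrm{Coker}\,f)_\varphi,\qquad (\mathrm{Coker}\,f)_\varphi \;=\; (\mathrm{Coker}\,f_F)_{\infty_\varphi},
\]
which matches the $\varphi$-decomposition of $\Fb(C)$ used to define the Kottwitz-type rank condition. Under the identification $V\otimes_{\BQ_p}C=\bigoplus_\varphi V\otimes_{F,\varphi}C$, the requirement that $\varphi_f$ define a point of $\Fb(C)$ becomes precisely $\dim_C(\mathrm{Coker}\,f_F)_{\infty_\varphi}=r_\varphi\,n$ for every $\varphi$. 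The compatibility with the period map is then just unwinding these identifications: the component at $\infty_\varphi$ of the stalk $\CE_F\otimes_{Y_F,\infty_\varphi}C$ is the summand $V\otimes_{F,\varphi}C$ of $V\otimes_{\BQ_p}C$, and the surjection onto $(\mathrm{Coker}\,f_F)_{\infty_\varphi}$ is the $\varphi$-component of $\varphi_f$.

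I do not expect any serious obstacle; the content of the corollary is purely formal once one has the theorem, and the only point that requires a bit of care is keeping track of the two parallel decompositions (the $F$-isotypic decomposition on $Y$ and the point decomposition on $Y_F$) and verifying that the Kottwitz condition defining $\Fb$ is compatible with them. This last verification is exactly the generic-fiber interpretation of the local model $\BM_r\otimes_{O_E}E$ recorded in Lemma \ref{genfiber}.
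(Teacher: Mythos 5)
Your proposal is correct and is essentially the argument the paper intends: the corollary is stated there without proof as a formal reformulation of the preceding theorem, and your unwinding via the equivalence $\psi_*$ between sheaves on $Y_F$ and $F\otimes_{\BQ_p}\CO_Y$-modules on $Y$, together with the matching of the $F$-isotypic decomposition of the stalk at $\infty$ with the points $\infty_\varphi$, is exactly the routine verification being suppressed. (Only a wording quibble: the $D$-action does not ``factor through $F$''; rather $F$ is central in $D$, so the $D$-action is $\CO_{Y_F}$-linear and descends compatibly.)
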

Now we compare the previous descriptions for the given function $r$,
and for the Drinfeld function $r^{\circ}$. Let $\BX^{\circ}$ denote  
the framing object for $\tilde{\CM}_{r^{\circ}}$. Then we may choose $D$-linear isomorphisms
\begin{equation*}
 V \otimes_{\BQ_p} \breve{\BQ}_p = M (\BX^{\circ})_{\BQ_p}  = V \otimes_{\BQ_p} \breve{\BQ}_p= M (\BX)_{\BQ_p}
\end{equation*}
such that for the respective Frobenius endomorphisms
\begin{equation*}
 F = \pi^m \cdot F^{\circ} \, ,
\end{equation*}
where
\begin{equation*}
 m = \# \{ \varphi \mid r_{\varphi} = n \} \, .
\end{equation*}
For the corresponding vector bundles on $Y_F$, we get
\begin{equation*}
 \CE_F = \CE^{\circ}_F \otimes_{\CO_{X_F}} \CL^m \, ,
\end{equation*}
where $\CL$ is the line bundle on $Y_F$ corresponding to the graded module
\begin{equation*}
 \bigoplus_{d \geq 0} (F \otimes_{\BQ_p} B^+_{\rm cris})^{\phi = \pi p^d} \, .
\end{equation*}
We have a natural identification
\begin{equation*}
 \CF_F =  \Lambda \otimes_{O_F} \CO_{Y_F} = \CF^{\circ}_F \, .
\end{equation*}
On the other hand, we have a natural identification
\begin{equation*}
 \Fb = \Fb^{\circ} \otimes_F E \, .
\end{equation*}
Here, for a $\bar{\BQ}_p$-algebra $R$, a point $[V \otimes_{\BQ_p} R \lra \CF] = [ \{ V \otimes_{F, \varphi} R \lra \CF_{\varphi} \mid \varphi \} ]$
of $\Fb$ is sent to $[ \{ V \otimes_{F, \varphi} R \lra \CF^{\circ}_{\varphi} \} ]$, with 
\begin{equation*}
 \CF^{\circ}_{\varphi}= 
\begin{cases}
\begin{array}{ll}
\CF_{\varphi_0}&  \text{if} \; \varphi = \varphi_0\\
V \otimes_{F, \varphi} R & \text{if} \; \varphi \neq \varphi_0\, .
\end{array}
\end{cases}
\end{equation*}
The general case of an $E$-algebra $R$ follows by descent. We point out that, by Morita equivalence,
$\Fb^{\circ} \cong \BP^{n -1}_F$.
\begin{lemma}
 For a given $\varphi$, there exists a global section
\begin{equation*}
 LT_{\varphi} \in \Gamma (Y_F, \CL) = (F \otimes_{\BQ_p} B^+_{\rm cris})^{\phi= \pi}
\end{equation*}
such that $LT_\varphi$ vanishes to first order at $\infty_\varphi$ and is non-vanishing at all other points. Furthermore, $LT_{\varphi}$ is 
unique up to $F^\times$.
\end{lemma}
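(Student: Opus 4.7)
The plan is to prove existence by constructing $LT_\varphi$ as the period of a Lubin-Tate formal $O_F$-module with $F$-action induced by $\varphi$, and to obtain uniqueness by observing that the ratio of two such sections is a global regular nowhere-vanishing function on $Y_F$, hence an element of $F^\times$.

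For existence, I would fix $\varphi$ and consider a Lubin-Tate formal $O_F$-module $\CG_\varphi$ of $F$-height one over $O_C$ associated to the uniformizer $\pi$, normalized so that the induced $O_F$-action on $\Lie\CG_\varphi\cong O_C$ is the embedding $\varphi$. The $\pi$-adic Tate module $T_\pi(\CG_\varphi)$ is free of rank one over $O_F$; fix a generator $e$, represented by a compatible sequence $(x_n)_{n\geq 1}$ of $\pi^n$-torsion points. Via the Scholze-Weinstein formalism applied to $\CG_\varphi$ (cf. \cite{SW}), the generator $e$ admits a canonical Fontaine-theoretic lift $e^{\sharp}$, and applying the formal logarithm yields
\begin{displaymath}
LT_\varphi := \log_{\CG_\varphi}(e^{\sharp}) \in F\otimes_{\BQ_p}B^+_{\mathrm{cris}} \, .
\end{displaymath}
Since the rational crystalline Frobenius on the covariant Dieudonn\'e module of $\CG_\varphi$ acts by $\pi^{-1}$ on a basis, one verifies $\phi(LT_\varphi) = \pi\cdot LT_\varphi$, so that $LT_\varphi\in (F\otimes B^+_{\mathrm{cris}})^{\phi=\pi}=\Gamma(Y_F,\CL)$.

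To control the divisor of $LT_\varphi$, the plan is to use the localization maps $\theta_{\varphi'}\colon F\otimes B^+_{\mathrm{cris}}\to C$ obtained by composing $\id_F\otimes\theta$ with the projection $F\otimes_{\BQ_p} C=\prod_{\varphi'\in\Phi}C\twoheadrightarrow C$ corresponding to $\varphi'$. When $\varphi'=\varphi$, the evaluation $\theta_\varphi(LT_\varphi)$ equals $\log_{\CG_\varphi}$ applied to the image of $e^{\sharp}$ in $\CG_\varphi(O_C)$, which is a $\pi$-power-torsion element; this vanishes, and the zero is of first order because $\CG_\varphi$ is one-dimensional and the Lubin-Tate logarithm has non-vanishing differential at the origin. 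When $\varphi'\neq\varphi$, the $F$-action is twisted through a different embedding, so the logarithm is not evaluated at a torsion point and yields a nonzero value in $C$.

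For uniqueness, given two candidates $s_1,s_2\in\Gamma(Y_F,\CL)$ as in the statement, the meromorphic function $s_1/s_2$ on $Y_F$ has zero divisor $[\infty_\varphi]-[\infty_\varphi]=0$, so it is a regular nowhere-vanishing function on $Y_F$. Fontaine's theorem $(B^+_{\mathrm{cris}})^{\phi=1}=\BQ_p$ gives $H^0(Y,\CO_Y)=\BQ_p$, and by flat base change $H^0(Y_F,\CO_{Y_F})=F$; hence $s_1/s_2\in F^\times$. The main obstacle is the precise verification of the vanishing behaviour of $LT_\varphi$ at each $\infty_{\varphi'}$: this amounts to unpacking the decomposition $F\otimes_{\BQ_p}C=\prod_{\varphi'}C$ of the evaluation and showing that the Lubin-Tate period contributes a simple zero in exactly the $\varphi$-component, which is the analogue in the Lubin-Tate setting of the familiar fact that Fontaine's element $t=\log[\varepsilon]\in B^+_{\mathrm{cris}}$ has a simple zero at $\infty$ when $F=\BQ_p$.
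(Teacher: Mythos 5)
Your proposal is correct and takes essentially the same route as the paper, whose entire proof consists of the remark that $LT_\varphi\colon \CO_{Y_F}\to\CL$ is the modification attached, in the Scholze--Weinstein classification, to the Lubin--Tate formal group of $(F,\varphi,\pi)$; your explicit period $\log_{\CG_\varphi}(e^{\sharp})$ is that map written out, and the divisor computation you single out as the main obstacle is exactly what the classification gives for free (the cokernel is supported at $\infty$, killed by $\mathfrak{m}_{Y,\infty}$, with fiber at $\infty_{\varphi'}$ of dimension $\dim\Lie_{\varphi'}\CG_\varphi$, which is $1$ for $\varphi'=\varphi$ and $0$ otherwise --- this is cleaner than your ``not evaluated at a torsion point'' heuristic and also covers non-vanishing away from the fiber over $\infty$). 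Your uniqueness argument via $H^0(Y_F,\CO_{Y_F})=F$ is the standard one and is what the paper leaves implicit.
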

\begin{proof}
 The homomorphism $LT_{\varphi} : \CO_{Y_F} \lra \CL$ corresponds in the Scholze-Weinstein description to the Lubin-Tate formal
group corresponding to $(F, \varphi, \pi)$.
\end{proof}
\begin{lemma}
 Let $i : \CG_F \lra \CE_F$ be the injection of vector bundles on $Y_F$,
\begin{equation*}
 \CG_F = \{ x \in \CE_F \mid x \equiv 0 \mod \infty_{\varphi} \, , \, \forall \varphi \,\, {\rm with} \, r_{\varphi} = n \} \, .
\end{equation*}
Then the map $x \longmapsto x \cdot \Pi_{ \{ \varphi \mid r_{\varphi} = n \} } LT_{\varphi}$ defines an 
isomorphism $\CE^{\circ}_F \simeq \CG_F$.
\end{lemma}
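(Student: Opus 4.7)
The plan is to exhibit the claimed map as multiplication by an explicit section of $\CL^m$ and then verify it is an isomorphism by a local computation on $Y_F$. First, set
\[
s \, := \prod_{\{\varphi \mid r_\varphi = n\}} LT_\varphi \;\in\; \Gamma(Y_F, \CL^m).
\]
By the preceding lemma, each factor $LT_\varphi$ has a simple zero at $\infty_\varphi$ and is non-vanishing elsewhere, so $s$ has a simple zero at each $\infty_\varphi$ with $r_\varphi = n$ and is non-vanishing at every other point of $Y_F$. Multiplication by $s$ then defines a homomorphism
\[
\mu_s \colon \CE^\circ_F \;\lra\; \CE^\circ_F \otimes_{\CO_{Y_F}} \CL^m \;=\; \CE_F,
\]
using the identification $\CE_F = \CE^\circ_F \otimes \CL^m$ coming from the relation $F = \pi^m F^\circ$ between the Frobenii.

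Next, I would verify that $\mu_s$ is injective and factors through $\CG_F$. Injectivity holds because $Y_F$ is regular (hence reduced) of dimension one, so the non-zero section $s$ is a non-zero-divisor on every stalk of the locally free sheaf $\CE^\circ_F$. The image lies in $\CG_F$ since $s$ vanishes at each $\infty_\varphi$ with $r_\varphi = n$, whence every local section of the form $s \cdot x$ vanishes there.

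Finally I would check that the induced map $\mu_s \colon \CE^\circ_F \to \CG_F$ is an isomorphism stalk-by-stalk. At any point $y \in Y_F$ outside $\{\infty_\varphi \mid r_\varphi = n\}$, the section $s$ is a generator of $\CL^m_y$ and $\CG_{F,y} = \CE_{F,y}$, so $\mu_s$ is clearly an isomorphism at $y$. At $y = \infty_\varphi$ with $r_\varphi = n$, choose a local uniformizer $t$ of $\CO_{Y_F,\infty_\varphi}$ and a local trivialization of $\CL$; then $LT_\varphi = t \cdot u$ for a unit $u$, while $LT_{\varphi'}$ is a unit for every other factor in the product. Therefore $s$ equals $t$ times a unit locally, and $\mu_s$ identifies with multiplication by $t$, whose image inside the stalk $(\CE_F)_{\infty_\varphi}$ is $\mathfrak{m}_{Y_F,\infty_\varphi} \cdot (\CE_F)_{\infty_\varphi}$; this is by definition the stalk of $\CG_F$ at $\infty_\varphi$.

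There is no real obstacle; the only point requiring care is that the identification $\CE_F = \CE^\circ_F \otimes \CL^m$ used to define $\mu_s$ is the one induced by the chosen compatible trivializations $V \otimes \breve\BQ_p = M(\BX)_{\BQ_p} = M(\BX^\circ)_{\BQ_p}$ satisfying $F = \pi^m F^\circ$, so that the description of $s$ as a product of Lubin--Tate sections correctly matches the local structure at each $\infty_\varphi$.
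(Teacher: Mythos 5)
Your proof is correct, but it follows a different route from the paper's. The paper's argument is a two-line global one: it observes that multiplication by $\prod_{\{\varphi\mid r_\varphi=n\}}LT_\varphi$ identifies $\CE^{\circ}_F$ with a subbundle of $\CG_F$, and then concludes by comparing degrees of $\CE^{\circ}_F$ and $\CG_F$, invoking the formalism of degrees of vector bundles on the Fargues--Fontaine curve (analogous to the theory over smooth projective curves). You instead avoid the degree formalism entirely and check the map stalk by stalk: away from the $\infty_\varphi$ with $r_\varphi=n$ the section $s$ trivializes $\CL^m$ and $\CG_F$ agrees with $\CE_F$, while at such an $\infty_\varphi$ the local ring is a discrete valuation ring, $s$ is a uniformizer times a unit, and multiplication by it carries the stalk of $\CE_F$ isomorphically onto $\mathfrak{m}_{Y_F,\infty_\varphi}\cdot(\CE_F)_{\infty_\varphi}$, which is exactly the stalk of $\CG_F$ there. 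Your version is more elementary and self-contained (it uses only that $Y_F$ is regular of dimension one, which the paper has already recorded), at the cost of being longer; the paper's version is shorter but presupposes that a subbundle of the same rank and degree as the ambient bundle must equal it, i.e., the degree theory on $Y$. Your closing remark about pinning down the identification $\CE_F=\CE^{\circ}_F\otimes\CL^m$ via the chosen trivializations with $F=\pi^m F^{\circ}$ is exactly the right point of care.
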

\begin{proof}
 The map 
 $$
 x\mapsto x\prod\nolimits_{\{\varphi\mid r_\varphi =n\}} LT_\varphi
 $$ obviously identifies $\CE^{\circ}_F$ with a subbundle of $\CG_F$. Comparing the degrees of $\CE^{\circ}_F$
and $\CG$, the result follows. (There is a formalism of degrees of vector bundles on $Y$ resembling the usual theory over smooth
projective curves.)
\end{proof}
\begin{proposition}
 Under the identification $\CE^{\circ}_F = \CG_F$ and $\CF_F=\CF^{\circ}_F$, the map sending $f^{\circ}_F\in \Hom(\CF^\circ_F, \CE^\circ_F)$ to 
 $f_F = i \circ f^{\circ}_F\in \Hom(\CF_F, \CE_F)$ defines a
bijection $\BM^{\circ}_{\infty} (C) = \BM_{\infty} (C)$ which commutes with the period map $\pi^{\circ}_{\infty}$, resp.
$\pi_{\infty}$, to $\Fb^{\circ} (C) = \Fb (C)$.  
\end{proposition}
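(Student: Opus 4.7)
The plan is to verify three things: that the assignment $f^\circ_F \mapsto i \circ f^\circ_F$ lands in $\BM_\infty(C)$, that it is a bijection onto $\BM_\infty(C)$, and that it intertwines the period maps via the identification $\Fb^\circ \otimes_F E = \Fb$. The central observation is that the cokernel conditions defining $\BM_\infty(C)$ force every $f_F$ to factor through the subbundle $\CG_F \subset \CE_F$ introduced in the previous lemma.

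First I would treat the forward map. Given $f^\circ_F \in \BM^\circ_\infty(C)$ with $\Coker f^\circ_F$ supported at $\infty_{\varphi_0}$ of length $n$, I form $f_F := i \circ f^\circ_F : \CF_F = \CF^\circ_F \to \CG_F \hookrightarrow \CE_F$. Injectivity is clear. Since $\CG_F$ is the kernel of evaluation of $\CE_F$ at the points $\infty_\varphi$ with $r_\varphi = n$, the cokernel $\Coker i$ is naturally $\bigoplus_{\{\varphi \mid r_\varphi = n\}} \CE_F|_{\infty_\varphi}$, an $\mathfrak{m}$-torsion sheaf of rank $n^2 = r_\varphi \cdot n$ at each such $\infty_\varphi$. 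The snake lemma applied to the factorisation $\CF^\circ_F \hookrightarrow \CE^\circ_F \cong \CG_F \hookrightarrow \CE_F$ gives a short exact sequence
\begin{equation*}
0 \to \Coker f^\circ_F \to \Coker f_F \to \Coker i \to 0,
\end{equation*}
from which one reads off that $\Coker f_F$ is $\mathfrak{m}$-torsion, supported on $\{\infty_\varphi\}_{\varphi}$, with local length $r_\varphi \cdot n$ at each $\infty_\varphi$ (recall $\varphi_0$ does not lie in $\{\varphi \mid r_\varphi=n\}$ since $r_{\varphi_0}=1$ and $n\geq 2$). The $D$-linearity of $f_F$ is automatic because multiplication by the scalar section $\prod LT_\varphi$ is $D$-linear. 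Hence $f_F \in \BM_\infty(C)$.

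For the inverse, given any $f_F \in \BM_\infty(C)$, I show its image is contained in $\CG_F$. Locally near $\infty_\varphi$ with $r_\varphi=n$, the condition that $\Coker f_F$ is $\mathfrak{m}_{\infty_\varphi}$-torsion of $C$-dimension $n^2 = \rank\CE_F$ forces $\Im f_F \subset \mathfrak{m}_{\infty_\varphi} \CE_F$, and globally this means $\Im f_F \subset \CG_F$. Thus $f_F = i \circ f^\circ_F$ for a unique $D$-linear injection $f^\circ_F : \CF^\circ_F \to \CE^\circ_F$. The same short exact sequence above, read in reverse, shows $\Coker f^\circ_F$ has the required support and length for the Drinfeld moduli problem (the $\Coker i$ contribution exactly accounts for the cokernel of $f_F$ at the points $\infty_\varphi$ with $r_\varphi=n$, leaving only length $n$ at $\infty_{\varphi_0}$). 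These two constructions are clearly mutually inverse.

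Finally, compatibility with the period maps is extracted from the same exact sequence localized at each $\infty_\varphi$: at $\infty_{\varphi_0}$ one has $(\Coker i)_{\infty_{\varphi_0}}=0$, so $(\Coker f_F)_{\infty_{\varphi_0}}=(\Coker f^\circ_F)_{\infty_{\varphi_0}}$, and the surjections from $V\otimes_{F,\varphi_0} C$ agree; at each $\infty_\varphi$ with $r_\varphi=n$ the cokernel equals the full fiber $\CE_F|_{\infty_\varphi}=V\otimes_{F,\varphi}C$, which matches exactly the ``trivial'' surjection $V\otimes_{F,\varphi}C\twoheadrightarrow V\otimes_{F,\varphi}C$ that defines the $\Fb^\circ\otimes_F E$-component at $\varphi\neq\varphi_0$; and at $\infty_\varphi$ with $r_\varphi=0$ both cokernels vanish. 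This produces the required identification of points under $\pi_\infty$ and $\pi^\circ_\infty$. The main technical point, and the only one requiring care, is the cokernel bookkeeping at the points with $r_\varphi=n$ — once the lemma identifying $\CE^\circ_F$ with $\CG_F$ is in hand, the argument is essentially formal.
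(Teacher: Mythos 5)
Your proof is correct and follows essentially the same route as the paper's: the forward direction by tracking cokernels of the composition $\CF_F^\circ \hookrightarrow \CE_F^\circ \cong \CG_F \hookrightarrow \CE_F$, the inverse by showing the cokernel conditions at the points $\infty_\varphi$ with $r_\varphi=n$ force $\Im f_F \subset \mathfrak{m}_{\infty_\varphi}\CE_F$, hence factorization through $\CG_F$, and the period-map compatibility by matching fibers of the cokernels. You merely make explicit (via the snake-lemma sequence and the local DVR count) what the paper asserts as clear.
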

\begin{proof}
 It is clear that $\Coker f^{\circ}_F$ has support in $\{ \infty_{\varphi_0} \} \cup \{ \infty_{\varphi} \mid r_{\varphi} = n \}$,
and that for $\varphi$ with $r_{\varphi} = n$, one has that $(\Coker f^{\circ}_F)_{\infty_F} = \CE_F \otimes_{\CO_{Y_F}, \infty_{\varphi}} C$.
Conversely, any $f_F \in \BM_{\infty} (C)$ has to factor through $\CG_F$ and has the correct cokernel at $\infty_{\varphi_0}$. The
assertion regarding the period map is obvious from the way that $\Fb (C)$ is identified with $\Fb^{\circ} (C)$.
\end{proof}
\begin{corollary}\label{imper}
 Under the identification $\Fb^{ad} = (\Fb^{\circ} \otimes_F E)^{ad}$, the images of the period morphisms $\pi_{\infty}$ and $\pi^{\circ}_{\infty} \otimes_{\breve{F}} \breve{E}$,
coincide.
\end{corollary}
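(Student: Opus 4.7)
The plan is to deduce this immediately from the preceding Proposition together with the general fact that the image of a period morphism is an open adic subspace characterized by its geometric points.

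First, I would unpack what the preceding Proposition gives. For every algebraically closed non-archimedean extension $C$ of $\breve{E}$ that is complete for a $p$-adic valuation, the bijection $\BM^{\circ}_{\infty}(C) \simeq \BM_{\infty}(C)$ obtained by sending $f^{\circ}_F$ to $i \circ f^{\circ}_F$ intertwines the period morphisms $\pi^{\circ}_{\infty}$ and $\pi_{\infty}$ under the canonical identification $\Fb^{\circ}(C) = \Fb(C)$. Consequently, the sets of $C$-points of the images of $\pi_{\infty}$ and of $\pi^{\circ}_{\infty} \otimes_{\breve{F}} \breve{E}$ inside $\Fb^{ad}(C) = (\Fb^{\circ} \otimes_F E)^{ad}(C)$ coincide.

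Second, I would invoke the fact that for any $K \subset K_0$ the image of the period morphism $\pi_K$ is an open adic subspace of $(\Fb \otimes_E \breve{E})^{ad}$ (the so-called admissible locus), independent of $K$, and that it agrees with the set-theoretic image of $\pi_{\infty}$; similarly for $\pi^{\circ}_K$. This is the standard output of the Rapoport--Zink and Scholze--Weinstein theory applied at finite and infinite level, using that $\BM_K \to \bigl(\Fb\otimes_E\breve{E}\bigr)^{ad}$ is \'etale with image an open subspace. Two open adic subspaces of the separated adic space $(\Fb \otimes_E \breve{E})^{ad}$ that have the same set of points in every algebraically closed complete non-archimedean extension are equal, so the equality of images follows.

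The hard work has already been carried out in the preceding Proposition, which matches the two moduli descriptions on the infinite level in a manner compatible with the period maps. The only remaining point, which I expect to be essentially formal, is to upgrade the pointwise identification to an identification of open adic subspaces; this is where one uses the openness of the admissible locus and that an open subspace is determined by its geometric points.
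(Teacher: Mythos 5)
Your overall structure is the same as the paper's: reduce the equality of images to an equality of $C$-valued points for all algebraically closed complete extensions $C$, and then quote the preceding Proposition for the pointwise statement. The first step of your argument, however, rests on a principle that is false for adic spaces: an open adic subspace is \emph{not} determined by its points valued in algebraically closed complete extensions. Such points see only the rank-one (maximal) points of the space, and an open subset is in general not determined by its rank-one points --- for instance, the complement of a single closed rank-two point in the adic closed unit disc is open and has exactly the same $C$-points as the whole disc. So "openness of the admissible locus plus equality of geometric points" does not by itself yield equality of the two images, and étaleness of $\pi_K$ only gives you openness.

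The correct justification, and the one the paper uses, is that the period morphisms (and hence their images) are \emph{partially proper}, citing Fargues \cite{F}. A partially proper (overconvergent) open subspace is closed under the passage from a point to its specializations lying over the same maximal generization, and therefore \emph{is} determined by its rank-one points; this is exactly what licenses the reduction to $C$-points. So your proof becomes correct once you replace "the image is open, and open subspaces are determined by their geometric points" by "the image is a partially proper open subspace, and partially proper subspaces are determined by their geometric points". The remainder --- extracting from the preceding Proposition the bijection $\BM^{\circ}_{\infty}(C)\simeq\BM_{\infty}(C)$ compatible with $\pi^{\circ}_{\infty}$ and $\pi_{\infty}$ under $\Fb^{\circ}(C)=\Fb(C)$ --- is exactly the paper's argument.
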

\begin{proof}
 All maps are partially proper \cite{F}, hence it suffices to prove for all algebraically closed complete extensions $C$ of $\bar{\BQ}_p$ that
\begin{equation*}
 \Im \, \pi_{\infty} (C) = \Im \, \pi^{\circ}_{\infty} (C) \, .
\end{equation*}
This follows from the previous proposition. 
\end{proof}
\begin{proof}[Proof of Theorem \ref{maingenfib}]
We need to construct an isomorphism
\begin{equation*}
 \BM_{K_0} \simeq \BM^{\circ}_{K_0} \otimes_{\breve{F}} \breve{E} \, .
\end{equation*}
It suffices to construct the isomorphism on the open and closed subloci $\BM^{(n)}_{K_0}$, resp. $\BM^{\circ(n)}_{K_0}$,
where the height of $\varrho$ is a fixed integer $n$. But the fibers of the period morphisms $\pi_{K_0}$, resp. $\pi^{\circ}_{K_0}$,
through $\BM^{(n)}_{K_0}$, resp. $\BM^{\circ(n)}_{K_0}$,  can both be identified with $G (\BQ_p)^{\circ} / K_0$, where under the
identification $G (\BQ_p) = D^{\times}$, we have 
\begin{equation*}
 G (\BQ_p)^{\circ} = \{ x \in D^{\times} \mid \ord \det x = 0 \} \, .
\end{equation*}
Since $K_0 = G (\BQ_p)^{\circ}$, the period maps identify $\BM^{(n)}_{K_0}$ and $\BM^{\circ (n)}_{K_0}$ with open adic subsets
of $(\Fb \otimes_E \breve{E})^{ad} = (\Fb^{\circ} \otimes_F \breve{E})^{ad}$.
The assertion therefore follows from Corollary \ref{imper}.
\end{proof}

\section{The unramified case}\label{s:unramified}
In this section we prove Theorem \ref{mainUR}. Hence in this section $F/\BQ_p$ is unramified. Since we fixed an embedding
$\varphi_0 : F \lra \bar\BQ_p$, we may identify
\begin{equation*}
 \Hom_{\BQ_p} (F, \bar\BQ_p) = \{ \sigma^i \mid i \in \BZ/d \} \, .
\end{equation*}
In particular, $E = F$, via $\varphi_0$. We abbreviate
\begin{equation*}
 r_i = r_{\varphi_0 \circ \sigma ^i}  \, , \quad i\in \BZ/d \, .
\end{equation*}
Let $S$ be a $O_F$-scheme, such that $p$ is nilpotent on $S$.  We have
a decomposition 
\begin{equation*}
 O_F \otimes_{\BZ_p} \CO_S = \bigoplus_{i \in \BZ/d} \CO_S \, ,
\end{equation*}
where the action of $O_F$ on the $i$-th summand is via $\sigma^i$.  If
$(X, \iota)$ is a formal $O_D$-module over $S$, we correspondingly
obtain a decomposition,
\begin{equation*}
  \Lie X = \bigoplus_{i \in \BZ /d} \Lie\!_i \, X \, ,
\end{equation*}
where $t \in \Lie\!_i \, X$ iff $\iota(x)( t) = \sigma^i (x) \, t$,
$\forall x \in O_F$.
\begin{lemma}
  The condition $({\bf R}_r)$ implies the conditions $({\bf K}_r)$ and
  $({\bf E}_r)$.  
\end{lemma}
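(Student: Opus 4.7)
The plan is to verify the two implications $({\bf R}_r) \Rightarrow ({\bf E}_r)$ and $({\bf R}_r) \Rightarrow ({\bf K}_r)$ separately.

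\emph{Eisenstein.} Since $F = F^t$ and $e = 1$ in the unramified case, $\pi$ lies in $O_F$ and $\iota(\pi)$ acts on $\Lie_\psi X$ as the scalar $\psi(\pi) \in \CO_S$ given by the $O_E = O_F$-algebra structure. Each $\psi \in \Psi$ has a unique extension $\varphi_\psi \in \Phi$ with $\varphi_\psi(\pi) = \psi(\pi)$. A case-by-case check against \eqref{EisensteinCe} then suffices. For $\psi \neq \psi_0$ with $r_{\varphi_\psi} = 0$, one has $A_\psi = \emptyset$ and $\Lie_\psi X = 0$ by $({\bf R}_r)$, so the third condition is vacuous. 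For $\psi \neq \psi_0$ with $r_{\varphi_\psi} = n$, $A_\psi = \{\varphi_\psi\}$ and the third condition $(\iota(\pi) - \varphi_\psi(\pi))\vert\Lie_\psi X = 0$ is automatic by the scalar action. For $\psi = \psi_0$, $A_{\psi_0} = \emptyset$ (since $r_{\varphi_0} = 1 \neq n$), the first condition becomes $\iota(\pi) - \varphi_0(\pi) = 0$ on $\Lie_{\psi_0} X$ (automatic), and the second becomes $\bigwedge^{n+1} \id = 0$ on $\Lie_{\psi_0} X$, which follows from $\rank \Lie_{\psi_0} X = n r_{\psi_0} = n$ provided by $({\bf R}_r)$.

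\emph{Kottwitz.} Combining $({\bf R}_r)$ with the Eisenstein step above yields the Drinfeld condition $(\mathbf{D}_r)$, so $X$ is $r$-special, and Proposition \ref{DOD1p} gives that $\BD(X)_S$ is Zariski-locally a free $O_D \otimes_{\BZ_p} \CO_S$-module of rank one. Working locally, I realize $\Lie X$ as a quotient of $O_D \otimes \CO_S$ by an $O_D$-stable submodule $K$. In the unramified setting, the scalar action of $\iota(\pi)$ on each $O_{F^t}$-isotypic summand, combined with the rank profile prescribed by $({\bf R}_r)$ and the $O_D$-stability, uniquely pins down $K$ as the kernel of the standard surjection $O_D \otimes \CO_S \twoheadrightarrow P_r \otimes_{O_{\tilde E}} \CO_S$ (after faithfully flat base change to $O_{\tilde E}$, which descends back by Galois invariance). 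Consequently $\Lie X$ is locally isomorphic to $P_r \otimes_{O_{\tilde E}} \CO_S$ as an $O_D \otimes \CO_S$-module, and the Kottwitz identity $\det_\CL(x) = \Nrd_r(x)$ follows from the very definition of $\Nrd_r$ in \eqref{normmorph}.

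\emph{Main obstacle.} The crux is the structural identification $\Lie X \simeq P_r \otimes \CO_S$ over general, possibly non-reduced, bases, a setting in which Lemma \ref{KC0l} does not directly apply. What rescues the argument in the unramified case is the rigidity afforded by $\iota(\pi)$ acting as a scalar on each isotypic summand: this forces the classification of $O_D$-stable quotients of the free rank-one $O_D \otimes \CO_S$-module with prescribed isotypic ranks to be unique, so the identification holds universally. This rigidity fails in the ramified case, which is precisely why the Eisenstein conditions must be imposed separately there.
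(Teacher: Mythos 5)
Your Eisenstein half is correct and is essentially the paper's own argument (Remarks \ref{eisrem}\,(iii)): when $F=F^t$ the element $\pi$ lies in $O_{F^t}$ and acts on $\Lie_\psi X$ as the scalar $\psi(\pi)$, so the first and third conditions in \eqref{EisensteinCe} hold automatically (or reduce to $\Lie_\psi X=0$ when $r_\psi=0$, which $({\bf R}_r)$ supplies), and the second reduces to $\rank \Lie_{\psi_0}X\leq n$, again supplied by $({\bf R}_r)$.

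The Kottwitz half, however, rests on a false rigidity claim. The assertion that the rank profile, the scalar action of $\iota(\pi)$, and $O_D$-stability ``uniquely pin down'' the kernel $K\subset O_D\otimes_{\BZ_p}\CO_S$, so that $\Lie X\simeq P_r\otimes_{O_{\tilde E}}\CO_S$ as an $O_D\otimes\CO_S$-module, cannot be right: the $O_D$-stable locally split submodules with the prescribed isotypic ranks are exactly the points of the local model $\BM_r$, whose generic fibre is $\BP^{n-1}$ and whose special fibre has dimension $n-1$ (Lemma \ref{genfiber}, Corollary \ref{locmodred}) --- not a single point. Concretely, $\Lie_{\psi_0}X=\bigoplus_{k\in\BZ/n}\Lie_{\tilde\psi_0\tau^k}X$ is $\BZ/n$-graded of rank $n$ with $\Pi$ of degree one and $\Pi^n=\pi$; over a $\kappa_E$-algebra the composite around the cycle vanishes, but the individual maps $\Pi\colon \Lie_{\tilde\psi_0\tau}X\to\Lie_{\tilde\psi_0}X$ genuinely vary with the point of $\CM_r$, whereas $\Pi$ acts as zero on $P_r\otimes_{O_{\tilde E}}\kappa_{\tilde E}$; so $\Lie X$ is \emph{not} locally isomorphic to $P_r\otimes\CO_S$ in general. (If $K$ were rigid, Grothendieck--Messing would make $\CM_r$ formally \'etale over $O_{\breve E}$, contradicting Theorem \ref{mainUR}.) What saves the conclusion is that the Kottwitz condition only sees the determinant, which by Lemma \ref{KC1l} is insensitive to the off-diagonal terms $v_1\Pi+\dots+v_{n-1}\Pi^{n-1}$; this gives $({\bf K}_r)$ over $\kappa_E$-schemes (Corollary \ref{K_rR_r}), and the passage to arbitrary, possibly non-reduced, $O_E$-bases is achieved in the paper not by rigidity but by flatness of the local model: this is exactly the content of Proposition \ref{identform} (via Proposition \ref{flatimplies}), which is the reference the paper's one-line proof of this lemma invokes. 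Your closing diagnosis is also off: the rigidity you appeal to already fails in the unramified case; the reason the Eisenstein conditions carry real content in the ramified case is that there $\iota(\pi)$ no longer acts as a scalar on the $\psi$-eigenspaces.
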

\begin{proof}
For the implication $({\bf R}_r)\Rightarrow ({\bf E}_r)$, we refer
to Remarks \ref{eisrem}, (iii). The condition $({\bf D}_r)$ implies
$({\bf K}_r)$ by Proposition \ref{identform}. 
\end{proof}

We will now consider the display of $X$. To simplify the notation, we
assume that $S=\Spec R$, where $R$ is an $O_F$-algebra.  Let $W(O_F)$
be the ring of Witt vectors of $O_F$, and define a ring homomorphism
by
\begin{equation*}
  \lambda : O_F \lra W(O_F) \, , \quad w_m (\lambda (x)) = \sigma^m (x)\, .
\end{equation*}
Then $\lambda$ is Frobenius equivariant, i.e., $\lambda (\sigma (x)) =
^F\!\!(\lambda(x))$. For a $O_F$-algebra $R$, we obtain
\begin{equation*}
  \bar{\lambda} : O_F \lra W (O_F) \lra W (R)\, .
\end{equation*}
Let
\begin{equation*}
  \bar{\lambda}^{(i)} = \bar{\lambda} \circ \sigma^i = ^{F^i}\!\!
  \circ \bar{\lambda} : O_F \lra W(R). 
\end{equation*}

Let $\CP = (P, Q, F, \dot{F})$ be the display of $X$ over $R$. Hence
$P$ is a finitely generated projective $W(R)$-module, 
and $Q$ is submodule of $P$, and
\begin{equation*}
 F : P \lra P \, ,\,\,  \dot{F} : Q \lra P.
\end{equation*}
The action of $\SuetO_F $ on $\CP$ defines 
decompositions, 
\begin{equation*}
 P = \bigoplus P_i \, , \quad Q = \bigoplus Q_i \, ,
\end{equation*}
where $\SuetO_F$ acts on the $i$-th summand via
$\bar{\lambda}^{(i)}$. With respect to this $\BZ/d$-grading, the
operators 
$F$ and $\dot{F}$ are of degree $1$,
\begin{equation*}
 F : P_i \lra P_{i+1} \, , \quad \dot{F} : Q_i \lra P_{i+1} \, .
\end{equation*}
Then rank $P_i = n^2$, $\forall i,$ by Proposition \ref{DOD1p}. The
rank condition says:
\begin{equation}
\begin{aligned}
 P_0/Q_0 \, {\it is \, a \, locally \, free\,} R{-\it module \, of \,
   rank\, } n\\ 
P_i = Q_i \,\, {\it if} \, r_i = 0 \, ,\, \, I_R \cdot P_i = Q_i \,\,
{\it if} \, r_i = n \, . 
 \end{aligned}
\end{equation}
We choose a normal decomposition,
\begin{equation*}
 P_i = T_i \oplus L_i \, , \text{for all}\; i \, .
\end{equation*}
Then
\begin{equation*}
 F \oplus \dot{F} : T_i \oplus L_i \lra P_{i+1}
\end{equation*}
is a $^F$-linear isomorphism, for all $i$.
Let $i \neq 0$. When $r_i = 0$, then $P_i = Q_i$ and we obtain an isomorphism
\begin{equation}\label{iso0}
 \dot{F}^{\#} : W(R) \otimes_{F, W(R)} P_i \lra P_{i+1} \, .
\end{equation}
When $r_i = n$, then $L_i = (0)$, and we obtain an isomorphism
\begin{equation}\label{ison}
 F^{\#} : W(R)\otimes_{F, W(R)} P_i \lra P_{i+1} \, .
\end{equation}
We now define $^{F^d}$-linear homomorphisms,
\begin{equation*}
  F^{\#}_{\rm rel} : P^{(p^d)}_0 \lra P_0 \, , \, \,\dot{F}^{\#}_{\rm rel} : Q^{(p^d)}_0 \lra P_0 \, ,
\end{equation*}
as compositions,
\begin{equation*}
  F^{\#}_{\rm rel} : P^{(p^d)}_0 \overset{F^{\#}}\lra P_1^{(p^{d-1})} \tilde{\lra} \ldots \tilde{\lra} P_{d-1}^{(p)} \tilde{\lra} P_0 \, ,
\end{equation*}
resp.
\begin{equation*}
  \dot{F}^{\#}_{\rm rel} : Q^{(p^d)}_0 \lra P_1^{(p^{d-1})} \tilde{\lra} \ldots \tilde{\lra} P_{d-1}^{(p)} \tilde{\lra} P_0 \, .
\end{equation*}
Here the isomorphisms in the last two lines are either \eqref{iso0} or
\eqref{ison}.  We note that $P_0$ is a finitely generated projective
$W(R)$-module, that $Q_0$ is a submodule and $F_{\rm rel}$ and
$\dot{F}_{\rm rel}$ satisfy the following relations,
\begin{equation}\label{propoffrel}
  \begin{aligned}
    \dot{F}_{\rm rel} (^{V} \xi x) & = ^{F^{d-1}}\!\! \xi \cdot F_{\rm
      rel} (x) , & x \in P_0 \, .\\ 
p \cdot \dot{F}_{\rm rel} (y) & = F_{\rm rel} (y), & y \in Q_0 \, .
  \end{aligned}
\end{equation}
Indeed, the first identity reflects the $F^d$-linearity of
$\dot{F}_{\rm rel}$; the second identity comes from the fact that a
similar identity holds for $\dot{F}$ and $F$. The quadruple $(P_0,
Q_0, F_{\rm rel}, \dot{F}_{\rm rel})$ is a $d$-display in the
following sense:

\begin{definition}
Let $d\geq 1$ be a natural number. 
Let $R$ be a ring such that $p$ is a nilpotent in $R$. An {\it $d$-display
over $R$} is a quadruple $(P, Q, F, \dot{F})$, where $P$ is a finitely
generated projective $W(R)$-module, $Q$ a submodule of $P$ and
$F:P\to Q$ and $\dot{F}:Q\to P$ are $~^{F^{d}}$-linear maps such that the
following properties are satisfied: 
\begin{enumerate}
\item $I_{R}P\subset Q$, and $P/Q$ is a direct summand of the $R$-module $P/I_{R} P$. 
\item The linearization of $\dot{F}$,
$$
\dot{F}^\sharp: W(R)\otimes_{F^{d}, W(R)} Q \to P
$$
is surjective. 
\item For $x\in P$ and $w\in W(R)$, 
$$
\dot{F}(^V\!\!w x)=^{F^{d-1}}\!\!\!\!w F(x) . 
$$
\end{enumerate}
\end{definition}
We will now use the theory of relative Witt vectors and relative
displays. We denote by $q = p^{d}$ the number of elements in the
residue class field $\kappa$ of $\SuetO_F$. Also, when we use $\SuetO_F$ as a subscript, we simply write $\SuetO$.  
For an $\SuetO_F$-algebra $R$, we denote  by $W_{\!O}(R)$ the ring of
relative Witt vectors defined by the Witt polynomials 
\begin{equation*}
 w'_n(x_0, \ldots, x_n) = x_0^{q^n} + p  x_1^{q^{n-1}} + \cdots +
 p^n x_n.  
\end{equation*}
We have a canonical morphism $u: W(R) \rightarrow  W_{\!O}(R)$ such
that  
\begin{equation*}
\begin{aligned}
 u (^{F^d} \xi) & = ^{F'}\!\! \!u (\xi)\\
u (^V \xi) & = ^{V'}\!\! (u(^{F^{d-1}} \xi)) \, ,
\end{aligned}
\end{equation*}
cf. \cite{D}, Prop. 1.2. Here we denoted by a prime the operators on
$ W_{\!\suetO} (R)$. 

We now show how to associate to $(P_0, Q_0, F_{\rm rel}, \dot{F}_{\rm
  rel})$ a relative display $(P', Q', F', \dot{F}')$  with respect to
$ W_{\!O}(R)$ (replace the Witt vectors by the relative Witt vectors in the definition of a display).  

We set 
\begin{equation*}
 \begin{aligned}
  P' = &  W_{\!O} (R) \otimes_{u, W(R)} P_0  \\
Q' = & \Ker \big( W_{\!O} (R) \otimes_{u, W(R)} P_0 \lra P_0 / Q_0\big)\, .
 \end{aligned}
\end{equation*}
Here the last homomorphism is given by the composition
\begin{equation*}
 \begin{aligned}
 W_{\!O} (R) \otimes_{u, W(R)} P_0 \lra &  W_{\!O} (R) / I_{O}
(R) \otimes_{u, W(R)} P_0 = R \otimes_{u, W(R)} P_0 \\ 
& = P_0/ I (R) P_0 \lra P_0/Q_0 \, .  
 \end{aligned}
\end{equation*}
\begin{equation*}
 F' = ^{F'}\!\! \otimes F_{\rm rel} : P' \lra P' \, .
\end{equation*}
Note that this makes sense because of \eqref{propoffrel}, and defines a $^{F^d}$-linear endomorphism of $P'$. It remains to define
$\dot{F}' : Q' \lra P'$ with the following properties,
\begin{equation}\label{defoff}
 \begin{aligned}
  \dot{F}' (^{V'}\!\xi x) & = \xi \cdot F'(x), & x \in P' \\
\dot{F}' (\xi \otimes y) & = ^{F'}\!\! \xi \otimes \dot{F}_{\rm red} (y), & y \in Q \, .
 \end{aligned}
\end{equation}
More precisely, consider the normal decomposition $P_0 = T_0 \oplus L_0$. Then
\begin{equation*}
 Q' = \big(I_{O} (R) \otimes_{W(R)} T_0\big) \oplus  \big(W_{\!O} (R)
 \otimes_{W(R)} L_0\big) \, . 
\end{equation*}
We define $\dot{F}'$ on the first, resp. second summand by 
\begin{equation*}
 \begin{aligned}
\dot{F}' (^{V'}\!\! \xi \otimes t_0) & = \xi \otimes F_{\rm rel} (t_0)
\, , && t_0 \in T_0 \\ 
  \dot{F}' ( \xi \otimes l_0) & = ^{F'}\!\!\xi \otimes F_{\rm rel} (l_0) \, , && l_0 \in L_0 \, .
 \end{aligned}
\end{equation*}
{\bf Claim}: {\it The identities \eqref{defoff} are satisfied.}

\medskip

We start with the second identity. Let
\begin{equation*}
 y = ^{V'}\! \!\!\eta \cdot t_0 + l_0 \quad , \, t_0 \in T_0,\,  l_0 \in L_0 \, .
\end{equation*}
For the second summand, the identity to be checked is the definition of $\dot{F}'$. So we may
take $l_0 = 0$. Now
\begin{equation*}
 \begin{aligned}
\xi \otimes ^{V'}\!\! \eta \cdot t_0 & = \xi \cdot u (^{V'}\!\! \eta) \otimes t_0 = \xi \cdot ^{V'}\!\! u(^{F^{d-1}} \eta) \otimes t_0 \\  
& = ^{V'}\!\! (^{F'}\!\! \xi \cdot u (^{F^{d-1}} \eta)) \otimes t_0 \, .
 \end{aligned}
\end{equation*}
Hence the LHS of the identity to be checked is
\begin{equation*}
 \begin{aligned}
\dot{F}' (\xi \otimes \, ^{V'}\!\! \eta \, t_0) & = ^{F'}\!\! \xi u (^{F^{d-1}}\!\! \eta) \otimes F_{\rm rel} (t_0) \\
& =   ^{F'}\!\! \xi \otimes ^{F^{d-1}}\!\! \eta \cdot F_{\rm rel} (t_0) \\
& =  ^{F'}\!\! \xi \otimes \dot{F}_{\rm rel} (^{V'}\!\! \eta\, t_0) \, ,
 \end{aligned}
\end{equation*}
where in the last equation we used \eqref{propoffrel}. The second identity of \eqref{defoff} is proved.

Now we check the first identity. It suffices to check that 
\begin{equation*}
 \dot{F}' (^{V'}\! \xi \otimes x) = \xi \otimes F_{\rm rel} (x) \, ,
 \quad x \in P_0\, . 
\end{equation*}
If $x = t_0 \in T_0$, this holds by definition. Let $x = l_0 \in L_0$.
Then
\begin{equation*}
 \begin{aligned}
\dot{F}' (^{V'}\!\xi \otimes l_0) & = p \xi \otimes  \dot{F}_{\rm rel} (l_0) = \xi \otimes p \dot{F}_{\rm rel} (l_0) \\
& = \xi \otimes F_{\rm rel} (l_0) \, ,
 \end{aligned} 
\end{equation*}
where in the last equation we used \eqref{propoffrel}.

We now have checked that $(P', Q', F', \dot{F}')$ is a relative
display relative to $\SuetO_F$. By functoriality this relative display
has an $\SuetO_D$-action. Its Lie algebra $P'/Q'$ coincides with $\Lie_0
X = P_0/Q_0$. Therefore the action of $\SuetO_D$ on $P'/Q'$ is special in the
sense of Drinfeld (satisfies condition $({\bf R}_{r^\circ})$), i.e.,
we are in the case of Proposition  \ref{KC1p}. 
This implies automatically that the relative display is nilpotent. 

Let $R$ be an $\SuetO_F$-algebra where $\pi R$ is a nilpotent ideal.
By a theorem of Ahsendorf \cite{A}, Thm.~5.3.8, there is an
equivalence of categories between the category of $p$-divisible formal
$\SuetO_F$-modules over $R$ and the category of nilpotent relative
displays (this holds even without the hypothesis that $F/\BQ_p$ is
unramified). We therefore obtain a formal $\SuetO_F$-module $X'$ over
$S = \Spec R$, which is a special formal $\SuetO_D$-module because
$\Lie X' = P'/Q'$.

Applying the above construction to the framing object $(\BX,
\iota_{\BX})$, we obtain a special formal $\SuetO_D$-module $(\BX',
\iota_{\BX'})$ that we use as a framing object for the Drinfeld
functor $\CM_{r^\circ}$. Since the above construction is functorial in
$S$, we obtain a morphism of formal schemes over $\Spf O_{\breve F}$,
\begin{equation}\label{functormorph}
  \CM_r \lra \CM_{r^\circ} \, .
\end{equation}

\begin{theorem} The  morphism \eqref{functormorph} is an
isomorphism. In particular, there is an isomorphism of formal  
 schemes over $\Spf O_{\breve{F}}$,

\begin{equation*}
 \CM_r \simeq \hat\Omega^n_F \hat{\otimes}_{O_F} \SuetO_{\breve{F}} \, .
\end{equation*}
\end{theorem}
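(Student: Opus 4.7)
The plan is to show that the morphism \eqref{functormorph} is an equivalence of moduli functors by verifying that every step in its construction is a categorical equivalence, and then invoking Drinfeld's theorem \eqref{Urthm} to identify the target $\CM_{r^\circ}$ with $\hat{\Omega}^n_F \hat{\otimes}_{O_F} O_{\breve F}$. The broad strategy is to produce a functorial quasi-inverse at each stage of the chain $X \rightsquigarrow \CP \rightsquigarrow (P_0, Q_0, F_{\rm rel}, \dot F_{\rm rel}) \rightsquigarrow (P', Q', F', \dot F') \rightsquigarrow X'$ used in the main text.

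The first stage is the passage from the display $\CP = (P, Q, F, \dot F)$ of an $r$-special formal $\SuetO_D$-module to its $0$-component $d$-display $(P_0, Q_0, F_{\rm rel}, \dot F_{\rm rel})$ with $\SuetO_D$-action. This is an equivalence, because the rank condition $(\mathbf{R}_r)$ forces either $\dot F^\sharp$ or $F^\sharp$ to be an isomorphism on each graded piece $P_i$ for $i \neq 0$, cf.\ \eqref{iso0} and \eqref{ison}; hence one reconstructs the higher components $P_i$, $Q_i$ recursively together with their Frobenius structure. The second stage is the base change along $u\colon W(R) \to W_{\!O}(R)$ producing the relative $\SuetO_F$-display $(P', Q', F', \dot F')$; I would argue this is an equivalence whose quasi-inverse uses the Frobenius and Verschiebung compatibility of $u$ recalled from \cite{D}, Prop.~1.2, together with the defining identities \eqref{defoff}, which determine $F_{\rm rel}, \dot F_{\rm rel}$ uniquely from $F', \dot F'$. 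The third stage is Ahsendorf's equivalence \cite{A}, Thm.~5.3.8, between nilpotent relative $\SuetO_F$-displays with $\SuetO_D$-action and formal $p$-divisible $\SuetO_F$-modules with $\SuetO_D$-action. The Drinfeld condition $(\mathbf{R}_{r^\circ})$ on the resulting special formal $\SuetO_D$-module $X'$ is inherited from $(\mathbf{R}_r)$ via the identification $\Lie X' = P'/Q' = P_0/Q_0 = \Lie\!_0 X$, whose rank equals $n$ by Proposition \ref{DOD1p}, so the ``special'' condition in the sense of Proposition \ref{KC1p} holds.

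Finally, I would match framing objects using Lemma \ref{uniqueframe}, so that the composed equivalence identifies $(\BX, \iota_{\BX})$ with $(\BX', \iota_{\BX'})$; the height-zero condition on quasi-isogenies is preserved at each step because the constructions only repackage the underlying rational Dieudonn\'e data without altering its $\SuetO_D$-linear isogeny category. The main obstacle will be the detailed verification at the second stage, namely that the Witt-ring base change $u$ gives a genuine equivalence between $d$-displays and relative $\SuetO_F$-displays carrying $\SuetO_D$-actions, and not merely a one-way functor; the identities \eqref{defoff} must be shown to be not just satisfied but actually characterizing. A secondary delicate point is to rule out a height offset being introduced by the shift in Frobenius normalization between the $d$-display and the relative display, which would otherwise break the identification $\CM_r \simeq \CM_{r^\circ}$ at the level of height-zero quasi-isogeny data.
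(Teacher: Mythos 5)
Your strategy---upgrading every stage of the chain $X \rightsquigarrow \CP \rightsquigarrow (P_0,Q_0,F_{\rm rel},\dot F_{\rm rel}) \rightsquigarrow (P',Q',F',\dot F') \rightsquigarrow X'$ to an equivalence of categories---is not the paper's, and the two stages you would need to upgrade are exactly the ones you leave open. For the first stage, the isomorphisms \eqref{iso0} and \eqref{ison} identify $P_{i+1}$ with the Frobenius twist $W(R)\otimes_{F,W(R)}P_i$, i.e.\ they determine $P_{i+1}$ from $P_i$ and not conversely; to reconstruct $P_1,\dots,P_{d-1}$ and the individual operators $F,\dot F$ from the single composite $(F_{\rm rel},\dot F_{\rm rel})$ one must realize the $P_i$ as iterated Frobenius pullbacks of $P_0$ and then recover $\dot F\colon Q_0\to P_1$ from its $(d-1)$-fold twist, none of which is addressed by ``one reconstructs the higher components recursively.'' For the second stage, the identities \eqref{defoff} only show that base change along $u\colon W(R)\to W_{\!O}(R)$ is a well-defined functor in one direction; the assertion that it induces an equivalence between $d$-displays and relative $\SuetO_F$-displays is essentially the main content of Ahsendorf's $\CO$-display theory and does not follow from \eqref{defoff} being ``characterizing.'' You correctly flag this as the main obstacle, but flagging it is not closing it, so the argument as written is incomplete at its central step.

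The missing idea is that none of these equivalences is needed. The paper uses the chain only to define the morphism \eqref{functormorph} (it even remarks that the sole input from Ahsendorf is the functor $\mathcal{P}'\mapsto BT(\mathcal{P}')$, not the equivalence), and then proves that \eqref{functormorph} is an isomorphism by a different mechanism: both $\CM_r$ and $\CM_{r^\circ}$ are $p$-adic and flat over $\Spf \SuetO_{\breve F}$ by Proposition \ref{identform}, so it suffices to show the morphism is an isomorphism on special fibers. Over a $\kappa_F$-algebra one has $a_\psi\in\{0,1\}$, hence $\dot F'_\psi=\dot F_\psi$ or $\dot F'_\psi=p\dot F_\psi=F_\psi$, so $\CP$ and its Drinfeld partner $\CP'$ from Theorem \ref{equivofcat} yield the \emph{same} $d$-display; therefore \eqref{functormorph} modulo $\pi$ factors as the equivalence $\overline{\CM}_r\simeq\overline{\CM}_{r^\circ}$ of Theorem \ref{equivofcat} followed by the identity, and is an isomorphism; a morphism of flat $\pi$-adic formal schemes that is an isomorphism modulo $\pi$ is an isomorphism. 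To salvage your approach you would have to genuinely prove the two equivalences above (a substantial piece of work, amounting to re-proving part of the $\CO$-display theory), or else switch to this flatness-plus-special-fiber argument.
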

\begin{proof}
Let $R$ be a $\kappa_F$-algebra. We have (compare (\ref{RamFunkt1e}))
described a functor $\mathcal{P} \mapsto \mathcal{P}'$ which
associates to the display of a formal $O_D$-module with condition
$(\mathbf{D}_r)$ the display of a special formal $O_D$-module with
condition $(\mathbf{D}_{r^{\circ}})$. In the unramified case we take
$\pi = p$. Since $a_{\psi} = 0$  or $ 1$, we see that 
\begin{displaymath}
\dot{F}'_{\psi} = \dot{F}_{\psi}, \; or \; \dot{F}'_{\psi} =
p\dot{F}_{\psi} = F_{\psi}.
\end{displaymath}
Therefore starting from $\mathcal{P}$ or $\mathcal{P}'$ the
construction (\ref{propoffrel}) yields the same $d$-display. Therefore over
$\kappa_F$ the functor morphism (\ref{functormorph}) factors
\begin{displaymath}
  \CM_r \otimes_{O_F} \kappa_F \lra \CM_{r^\circ} \otimes_{O_F}
  \kappa_F \lra \CM_{r^\circ} \otimes_{O_F} \kappa_F,
\end{displaymath}
The first morphism is given by Theorem \ref{equivofcat} and the second
morphism is defined by associating to a display of a formal
$O_D$-module satisfying $(\mathbf{D}_{r^{\circ}})$ the display relative
to $O_F$. Therfore the last morphism is the identity. Since the first
arrow is an isomorphism by Theorem \ref{equivofcat}, we deduce that
(\ref{functormorph}) is an isomorphism over $\kappa_F$. But since both
schemes are flat by Proposition \ref{identform},  it is an isomorphism. 
\end{proof}

The only thing we use of Ahsendorf's work is: to each relative display
$\mathcal{P}'$ there is a formal group $BT (\mathcal{P}')$. 
Let $X$ be a formal $O_D$-module satisfying
$(\mathbf{D}_{r^{\circ}})$. Let $\mathcal{P}$ be the display of $X$. Let
$\mathcal{P}'$ be the relative display given by construction
(\ref{defoff}). Then $BT (\mathcal{P}')$ is canonically isomorphic
to $X$. 


\section{The Lubin-Tate moduli problem}\label{s:LT}
In this section we sketch that a modification of our method is
applicable to a variant of the Lubin-Tate moduli problem. 
Let again $F$ be an extension of degree $d$ of $\BQ_p$. We again fix
an embedding $\varphi_0 : F \lra \bar{\BQ}_p$. We 
also fix an integer $n \geq 2$, and function $r : \Phi \lra \BZ_{\geq
  0}$ with the same properties 
as in \eqref{propr}. Let $E$ be the corresponding reflex field.

Let $S$ be an $O_E$-scheme. Let $(\CL, \iota)$ be a locally free
$\CO_S$-module of finite rank with an action by $O_F$. We say that
{\it $(\CL, \iota)$ is an $O_F$-module over $S$}.  

Let $\tilde E$ be the normal closure of $E$. Then each $\varphi \in
\Phi$ factors through $\varphi : O_F \rightarrow O_{\tilde{E}}$. The
induced homomorphism $O_F \otimes_{\mathbb{Z}_p} O_{\tilde{E}} \rightarrow
O_{\tilde{E}}$ defines a map
\begin{displaymath}
{\rm Nm}_{\varphi}: \BV(O_F)_{O_{\tilde{E}}}\to \BA^1_{O_{\tilde{E}}}.
\end{displaymath}
We set ${\rm Nm}_r = \prod\nolimits_{\varphi}{\rm
  Nm}_\varphi^{r_\varphi}$, comp. \eqref{normmorph}. This is a
polynomial function defined over $O_E$,
$$
{\rm Nm}_r: \BV(O_F)_{O_E}\to \BA^1_{O_E} .
$$
We use the notations $\Psi$, $a_{\psi}$ from (\ref{abpsi1e}). For an
$O_E$-module $\mathcal{L}$ we have a natural decomposition 
\begin{displaymath}
\mathcal{L} = \bigoplus_{\psi \in \Psi} \mathcal{L}_{\psi}.
\end{displaymath}
We say that $(\mathcal{L}, \iota)$ satifies the rank
 condition $(\mathbf{R}_r^{F})$ if 
\begin{equation}
\rank_{\mathcal{O}_S} \mathcal{L}_{\psi_0} = a_{\psi_0}n + 1, \quad
\rank_{\mathcal{O}_S} \mathcal{L}_{\psi} = a_{\psi}n, \; \text{for} \,
\psi \neq \psi_0. 
\end{equation}
This implies $\rank_{\mathcal{O}_S} \mathcal{L} = \sum_{\varphi \in
  \Phi} r_{\varphi}$. 
With the notations (\ref{Qpsi1e}) we introduce the Eisenstein
conditions $(\mathbf{E}_r^{F})$ (compare Definition \ref{Eisen1d}):

\begin{equation}\label{EisensteinFe}
\begin{array}{rrrr}
((Q_0 \cdot Q_{A_{\psi_0}})(\iota(\pi)) | \mathcal{L}_{\psi_0}) & = & 0, &\\[2mm]
\bigwedge^{2}(Q_{A_{\psi_0}}(\iota(\pi) | \mathcal{L}_{\psi_0})) & = & 0, &\\[2mm]
(Q_{A_{\psi}}(\iota(\pi)) | \mathcal{L}_{\psi}) & = & 0, & \text{for} \;
\psi \neq \psi_0.\\[3mm]
\end{array}
\end{equation}
We say that $\mathcal{L}$ satisfies $(\mathbf{LT}_r^{F})$ if
$(\mathbf{R}_r^{F})$ and $(\mathbf{E}_r^{F})$ are satisfied. 

We say that an $O_F$-module $(\CL, \iota)$ over an $O_E$-scheme $S$ {\it satisfies the Kottwitz condition $({\bf K}^F_r)$ } if
\begin{equation}
{\rm Nm}_\CL={\rm Nm}_r \, ,
\end{equation}
an equality of two morphisms from $\BV(O_F)\times_{\Spec \BZ_p}S$ to $ \BA^1_S$. The condition is equivalent to the identity of polynomials with coefficients in $\CO_S$, 
\begin{equation}\label{kottwitzLT}
 {\rm char} \, (\iota (x) \mid \CL) = \prod\nolimits_{\varphi} (T - \varphi (x))^{r_\varphi} \, , \,\, \forall x \in O_F \, ,
\end{equation}
cf. Remark \ref{kottchar}.

We will consider $p$-divisible groups $X$ of height $n d$ over
$O_E$-schemes $S$ with an action $\iota : O_F \lra \End (X)$. We
assume that the action on $\Lie X$ satisfies the condition
$(\mathbf{LT}_r^F)$. We will also say that $X$ is of type $r$.  

Let us assume that $S =
\Spec k$ is the spectrum of a perfect field of characteristic $p$. Let
$M$ be the Dieudonn\'e module of $X$. We obtain a
decomposition
\begin{displaymath}
M = \bigoplus_{\psi \in \Psi} M_{\psi}, 
\end{displaymath}  
where $M_{\psi}$ is a free $W(k)$-module of rank $n [F:F^{t}]$. 
By the condition $(\mathbf{LT}_r^F)$ we find (compare Proposition
\ref{sfO2p}):  
\begin{displaymath}
\begin{array}{cc}
\pi^{a_{\psi_0} + 1} M_{\psi_0} \subset VM_{\sigma \psi_0} \subset
\pi^{a_{\psi_0}} M_{\psi_0}, & \\[2mm]
VM_{\sigma \psi} = \pi^{a_{\psi}}M_{\psi}, & \text{for} \; \psi \neq \psi_0. 
\end{array}
\end{displaymath}
If we replace $(M,F,V)$ by $(M,\oplus \pi^{a_{\psi}}F, \oplus
\pi^{-a_{\psi}}V)$, we obtain a Dieudonn\'e module of a $p$-divisible
group $Y$ of type $r^{\circ}$ for the Drinfeld function $r^{\circ}$.
We see that $X$ is a formal $p$-divisible group if and only if $Y$ is. Then
$(Y, \iota)$ is of Lubin-Tate type, cf \cite{D}, i.e., $Y$ is a strict formal
$O_F$-module of dimension one and $O_F$-height $n$. We conclude that
over an algebraically closed field $\bar{k}$ there is up to isomorphism a
unique formal $p$-divisible  group $(X,\iota)$ of type $r$. 

In general we say that $(X, \iota)$ over a scheme $S$ is a {\it
Lubin-Tate group of type r} if it is a formal $p$-divisible group of
height $n d$ such that the action of $O_{F}$ on $\Lie X$ satisfies
$(\mathbf{LT}_r^F)$. 
\begin{remark}
In the case $r=r^\circ$, the Eisenstein conditions  $(\mathbf{E}_r^{F})$ in $(\mathbf{LT}_r^F)$ are redundant. This follows from the flatness of the {\it naive local model} in \cite{PR.I}, in the case where $d=n, r=1$ in the notation of loc.~cit. But it is also easy to see this directly. 
\end{remark}
From the case $r = r^{\circ}$ we deduce: 
\begin{proposition}\label{uniqueLT}
Any Lubin-Tate group $(X,\iota)$ of type $r$ over $\bar k$  is
isoclinic. Any two  
Lubin-Tate groups of type $r$ over $\bar k$ are isomorphic. Any
$O_F$-linear  quasi-isogeny of height zero between Lubin-Tate groups
of type $r$ is an isomorphism. 
\end{proposition}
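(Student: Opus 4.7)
The plan is to deduce all three assertions from the corresponding statements for the classical Drinfeld case $r = r^{\circ}$ by means of the Dieudonn\'e-module twist $(M,F,V) \mapsto (M, F', V')$ introduced immediately before the proposition, where $F'_{\psi} = \pi^{a_{\psi}} F_{\psi}$ and $V'_{\psi} = \pi^{-a_{\psi}} V_{\psi}$. This construction leaves the underlying $W(\bar k)$-module and the $O_F$-action untouched, so it is an equivalence between the category of Dieudonn\'e modules of Lubin-Tate groups of type $r$ over $\bar k$ and the category of those of type $r^{\circ}$, and it takes $O_F$-linear morphisms to $O_F$-linear morphisms of the same underlying $W(\bar k)$-module map.

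First, I would check that the equivalence preserves heights of quasi-isogenies. An $O_F$-linear quasi-isogeny corresponds to a $W(\bar k)[1/p]$-linear, $O_F$-equivariant isomorphism $\rho\colon M_{1,\BQ} \to M_{2,\BQ}$; since the twists $F'_i = \oplus_{\psi} \pi^{a_{\psi}} F_i$ are obtained from $F_i$ by multiplication with an $O_F$-central (in fact scalar on each $\psi$-component) element, $\rho$ intertwines $F_1, F_2$ if and only if it intertwines $F'_1, F'_2$. The height, being computed from the $W(\bar k)$-linear map $\rho$ between the same underlying modules, is unchanged.

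Second, I would invoke Drinfeld's results in the case $r = r^{\circ}$. In this case $Y$ is a strict formal $O_F$-module of dimension one and $O_F$-height $n$, i.e., a classical Lubin-Tate formal $O_F$-module (as already noted in the text). By \cite{D}, such a $Y$ over $\bar k$ is isoclinic of slope $1/n$ as an $O_F$-isocrystal, is unique up to $O_F$-linear isomorphism, and its $O_F$-linear endomorphism algebra is the central division algebra $D_F$ over $F$ of invariant $1/n$, with maximal order $O_{D_F}$ as $O_F$-linear endomorphism ring. In particular, any $O_F$-linear quasi-isogeny of height zero corresponds to an element of $O_{D_F}^{\times}$, hence is an isomorphism.

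Third, I would transfer back. The equivalence pulls the three properties back to the type-$r$ side: isoclinicity of $(M, F, V)$ follows from that of $(M, F', V')$ since the twist is by powers of $\pi \in O_F$; an $O_F$-linear isomorphism $(M_1, F'_1, V'_1) \simeq (M_2, F'_2, V'_2)$ is automatically an isomorphism $(M_1, F_1, V_1) \simeq (M_2, F_2, V_2)$; and a height-zero $O_F$-linear quasi-isogeny between Lubin-Tate groups of type $r$ produces a height-zero one between the associated type-$r^{\circ}$ objects, which by Drinfeld is an isomorphism, whence so is the original. The only point that requires any verification is the preservation of heights under the twist, which is the observation of the first paragraph; everything else is formal from the fact that the twist is an equivalence of categories and changes neither the underlying module nor the $O_F$-action.
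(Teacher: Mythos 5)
Your proposal is correct and follows essentially the same route as the paper: the paper likewise deduces the proposition from the Drinfeld case $r=r^{\circ}$ via the twist $(M,F,V)\mapsto(M,\oplus\pi^{a_{\psi}}F,\oplus\pi^{-a_{\psi}}V)$ described just before the statement, then invokes Drinfeld's classification of strict formal $O_F$-modules of dimension one and $O_F$-height $n$. Your explicit check that the twist preserves $O_F$-linear morphisms and heights of quasi-isogenies is exactly the (implicit) content of the paper's phrase ``From the case $r=r^{\circ}$ we deduce.''
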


More generally the proof of Theorem \ref{equivofcat} shows the following fact. 
\begin{proposition} Let $S$ be a $\bar k$-scheme. Then there is an
  equivalence between the category of Lubin-Tate groups of type $r$ 
  and the category  of Lubin-Tate groups of type $r^{\circ}$. \qed
\end{proposition}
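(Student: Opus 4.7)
The plan is to replicate, in the simpler Lubin--Tate setting (no $O_D$-action), the display-theoretic construction $\mathcal{P} \mapsto \mathcal{P}'$ that proves Theorem \ref{equivofcat}. Working locally, assume $S = \Spec R$ with $R$ a $\bar k$-algebra whose nilradical is nilpotent (the general case is obtained by glueing). Given a Lubin--Tate group $(X,\iota)$ of type $r$ over $R$, begin with its (nilpotent) display $\mathcal{P} = (P, Q, F, \dot{F})$. The action of $O_{F^t}$ induces the decompositions $P = \bigoplus_{\psi \in \Psi} P_\psi$ and $Q = \bigoplus_{\psi \in \Psi} Q_\psi$, and the ranks of the $P_\psi$ are constant (equal to $n[F:F^t]$) by the analogue of Proposition \ref{DOD1p} in this context.

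Next, I would translate the Eisenstein conditions $(\mathbf{E}_r^{F})$ together with the rank conditions $(\mathbf{R}_r^{F})$ into the display-theoretic statements
\begin{equation*}
\pi^{a_{\psi_0}+1}P_{\psi_0} + IP_{\psi_0} \subset Q_{\psi_0} \subset \pi^{a_{\psi_0}}P_{\psi_0} + IP_{\psi_0},
\end{equation*}
and $Q_\psi = \pi^{a_\psi}P_\psi + IP_\psi$ for $\psi \neq \psi_0$, exactly parallel to \eqref{Eisenstein6e}. With this in hand, set $Q'_\psi = P_\psi$ for $\psi \neq \psi_0$, and define $Q'_{\psi_0} \subset P_{\psi_0}$ so that $Q'_{\psi_0}/Q_{\psi_0}$ is the kernel of $\pi^{a_{\psi_0}}$ acting on $P_{\psi_0}/Q_{\psi_0}$; by Lemma \ref{Eisen1l} this kernel is a direct summand. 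Put $F'_\psi = F_\psi \pi^{a_\psi}$ and $\dot{F}'_\psi = \dot{F}_\psi \pi^{a_\psi}$ and check that $\mathcal{P}' = (P, Q', F', \dot{F}')$ is the display of a Lubin--Tate group $X'$ of type $r^\circ$.

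The verification that $\mathcal{P}'$ is a display (the nontrivial points being that $\dot{F}'$ is a Frobenius-linear epimorphism and that nilpotency is preserved) reduces, exactly as in the proof of Theorem \ref{equivofcat}, to the case of a perfect field $k$, where it becomes the direct Dieudonn\'e-module computation
\begin{equation*}
V' = \pi^{-a_\psi} V, \qquad F' = \pi^{a_\psi} F,
\end{equation*}
(compare \eqref{Eisenstein6e2}--\eqref{Eisenstein7e}); the rank condition $(\mathbf{R}_{r^\circ}^{F})$ for $\mathcal{P}'$ follows at once from $(\mathbf{R}_r^{F})$ for $\mathcal{P}$, and the normal-decomposition argument for $\dot{F}'$ carries over verbatim. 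For the quasi-inverse I would proceed in two steps. If $R$ is reduced, the formulas $F_\psi = \pi^{-a_\psi}F'_\psi$, $\dot{F}_\psi = \pi^{-a_\psi}\dot{F}'_\psi$ make sense because $\pi$ acts injectively on $W(R)$, and one sets $Q_\psi = \pi^{a_\psi}Q'_\psi + IP_\psi$. For nonreduced $R$ one proceeds by induction along divided power thickenings $R \twoheadrightarrow S$, identifying liftings of $X$ with liftings of $X'$ by Grothendieck--Messing: the only freedom in lifting the Hodge filtration lies in the $\psi_0$-component, and the bijection is induced by $\bar{Q}'_R = \pi^{-a_{\psi_0}}\bar{Q}_R$, which is well-defined because $\pi^{a_{\psi_0}}$ induces an isomorphism $\mathbb{D}(X)_{R,\psi_0}/\pi\mathbb{D}(X)_{R,\psi_0} \isoarrow \pi^{a_{\psi_0}}\mathbb{D}(X)_{R,\psi_0}/\pi^{a_{\psi_0}+1}\mathbb{D}(X)_{R,\psi_0}$.

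The main obstacle, as in the $O_D$-case, is the nonreduced step: one must verify that the Grothendieck--Messing correspondence between liftings is compatible with the operation $\pi^{\pm a_{\psi_0}}$ on the $\psi_0$-component of the crystal, and that this bijection is natural in morphisms. Once this is established, functoriality of the whole construction in $S$ delivers the desired equivalence between the category of Lubin--Tate groups of type $r$ and that of Lubin--Tate groups of type $r^\circ$ over any $\bar k$-scheme.
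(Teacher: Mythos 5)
Your proposal is correct and is essentially the paper's own argument: the paper proves this proposition simply by remarking that the proof of Theorem \ref{equivofcat} carries over, and your write-up is exactly that adaptation (same decomposition over $\Psi$, same passage $\mathcal{P}\mapsto\mathcal{P}'$ via $Q'_{\psi_0}$ and $F'_\psi=\pi^{a_\psi}F_\psi$, same reduction to a perfect field and the same treatment of the quasi-inverse via reduced rings and Grothendieck--Messing for divided power thickenings).
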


We fix a Lubin-Tate group  $(\BX, \iota_{\BX})$ of type $r$ over the residue class field $\bar
k$  
of $\bar{\mathbb{Q}}_p$. It is unique up to
isomorphism. We may therefore define a functor $ \CM^F_r$ on ${\rm
  Nilp}_{O_{\breve{{E}}}}$. Namely, fixing a framing object $(\BX,
\iota_\BX)$ over $\bar k$,  $ \CM^F_r$ associates to $S\in {\rm
  Nilp}_{O_{\breve{{E}}}}$ the set of isomorphism classes of triples
$(X, \iota, \rho)$ where $(X, \iota)$ is a Lubin-Tate group of type
$r$ over $S$ and 
$\rho: X\times_S\bar S\to \BX\times_{\Spec \bar k}\bar S$ is an
$O_F$-linear quasi-isogeny of height zero. The 
formal scheme representing this functor will be denoted by the same
symbol. 

The main theorem in this section is the following.
\begin{theorem}\label{LTtheorem}
 The formal scheme $\CM^F_r$ is isomorphic to $\Spf O_{\breve{E}}
 [[t_1, \cdots , t_{n-1}]]$. In particular a Lubin-Tate group of type
 $r$ satisfies the Kottwitz condition $({\bf K}^F_r)$. 
\end{theorem}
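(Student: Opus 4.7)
The plan is to mirror the strategy of Theorem \ref{mainDR} in the Lubin--Tate context, reducing the general $r$ to the classical Drinfeld function $r^\circ$ via a twist construction on displays, and then invoking the classical Lubin--Tate deformation theorem. Concretely, I would first establish a Lubin--Tate analog of Theorem \ref{equivofcat}: for any $\kappa_E$-algebra $R$ whose nilradical is nilpotent, the construction $(P,Q,F,\dot F) \mapsto (P,Q',F',\dot F')$ of (\ref{RamFunkt1e}) defines an equivalence between displays of Lubin--Tate groups of type $r$ and displays of Lubin--Tate groups of type $r^\circ$. The verification that the twisted display $\mathcal P'$ satisfies the rank condition $(\mathbf{R}_{r^\circ}^F)$ (only one-dimensional instead of $n$-dimensional in the $\psi_0$-component) uses Lemma \ref{Eisen1l} applied with $m = a_{\psi_0}n+1$, $r = a_{\psi_0}n$, and $s = 1$, where the hypothesis $\bigwedge^{s+1} f = 0$ is precisely the second Eisenstein condition in \eqref{EisensteinFe}. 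This replaces the $\bigwedge^{n+1}$-condition used in Theorem \ref{equivofcat} and is exactly the reason the Eisenstein conditions in the Lubin--Tate case were formulated with $\bigwedge^2$ rather than $\bigwedge^{n+1}$.

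Second, I would promote this equivalence from $\bar k$-algebras to arbitrary objects of $\mathrm{Nilp}_{O_{\breve E}}$ by the Grothendieck--Messing argument used in the proof of Theorem \ref{equivofcat}: given a divided-power thickening $R \to S$ over $O_{\breve E}$, the liftings of a type-$r$ Lubin--Tate group over $S$ to $R$ correspond to liftings of the Hodge filtration $\bar Q_R \subset \mathbb D(X)_{R,\psi_0}$ satisfying $\pi^{a_{\psi_0}+1}\mathbb D(X)_{R,\psi_0} \subset \bar Q_R \subset \pi^{a_{\psi_0}}\mathbb D(X)_{R,\psi_0}$, and the substitution $\bar Q_R \mapsto \bar Q'_R := \pi^{-a_{\psi_0}}\bar Q_R$ sets up a bijection with the liftings of the associated type-$r^\circ$ group; this uses the bijection
$$
\mathbb D(X)_{R,\psi_0}/\pi\mathbb D(X)_{R,\psi_0} \xrightarrow{\pi^{a_{\psi_0}}} \pi^{a_{\psi_0}}\mathbb D(X)_{R,\psi_0}/\pi^{a_{\psi_0}+1}\mathbb D(X)_{R,\psi_0}.
$$
Transporting framing objects then yields an isomorphism of formal schemes $\CM^F_r \simeq \CM^F_{r^\circ}\,\hat\otimes_{O_{\breve F}} O_{\breve E}$.

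Third, since $\CM^F_{r^\circ}$ is by construction the deformation space of the unique one-dimensional $O_F$-height-$n$ formal $O_F$-module over $\bar k$, the classical Lubin--Tate theorem gives $\CM^F_{r^\circ} \simeq \Spf O_{\breve F}[[t_1,\ldots,t_{n-1}]]$. Base-changing to $O_{\breve E}$ yields the claimed isomorphism. The Kottwitz condition $(\mathbf{K}^F_r)$ then follows from the Lubin--Tate analog of Proposition \ref{flatimplies}: the ring $O_{\breve E}[[t_1,\ldots,t_{n-1}]]$ is an integral domain of characteristic-$0$ fraction field, over which $\iota(\pi)$ acts semisimply on $\Lie X$, with eigenvalues constrained by $(\mathbf{E}_r^F)$ and multiplicities pinned down by $(\mathbf{R}_r^F)$ to match $\prod_\varphi (T-\varphi(\pi))^{r_\varphi}$; since $(\mathbf{K}^F_r)$ is a closed condition, it propagates to the whole integral model.

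The main obstacle will be the Grothendieck--Messing step, namely checking that the twist $\bar Q_R \mapsto \bar Q'_R$ is well-defined and preserves direct-summand-ness under arbitrary divided-power thickenings. This requires controlling how the isomorphism between the two graded pieces of $\mathbb D(X)_{R,\psi_0}$ under multiplication by $\pi^{a_{\psi_0}}$ behaves on the PD-ideal, and verifying that $\bar Q'_R$ remains a lift of $\bar Q'_S$ compatible with the condition $\pi\mathbb D(X)_{R,\psi_0} \subset \bar Q'_R$ required for the $r^\circ$-condition. Once Lemma \ref{Eisen1l} is correctly applied at the level of the thickened crystal, the argument parallels Theorem \ref{equivofcat} line by line.
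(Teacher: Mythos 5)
Your steps 1 and 3 are sound and agree with the paper (the equivalence of type-$r$ and type-$r^\circ$ Lubin--Tate groups over $\bar k$-schemes, and the deduction of $(\mathbf{K}^F_r)$ as in Proposition \ref{flatimplies}). The gap is in step 2. The Grothendieck--Messing induction in the proof of Theorem \ref{equivofcat} runs entirely within $\kappa_E$-algebras: the base case is a \emph{reduced} $\kappa_E$-algebra, where $\pi$ acts injectively on $W(R)$ and the Eisenstein conditions degenerate to statements about powers of $\pi$, and one then climbs nilpotent thickenings in characteristic $p$. Over a general object of $\mathrm{Nilp}_{O_{\breve E}}$ the relevant operator is not $\pi^{a_{\psi_0}}$ but $Q_{A_{\psi_0}}(\iota(\pi))=\prod_{\varphi\in A_{\psi_0}}(\iota(\pi)-\varphi(\pi))$ with $\varphi(\pi)\neq 0$; the containment $\pi^{a_{\psi_0}+1}\mathbb D\subset\bar Q_R\subset\pi^{a_{\psi_0}}\mathbb D$ and the ``bijection between graded pieces'' you invoke are only valid modulo $p$. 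More fundamentally, Grothendieck--Messing classifies lifts of a group already given over the base $S$ of the PD-thickening, so to transport lifts you need the type-$r^\circ$ partner $X'$ to exist over $S$ beforehand; your induction therefore has no base case once $S$ leaves characteristic $p$ (e.g.\ $S=\Spec O_{\breve E}/p^2$). Note that your mechanism is in no way specific to the Lubin--Tate setting: applied verbatim to special formal $O_D$-modules it would yield $\CM_r\simeq\CM_{r^\circ}\hat\otimes_{O_{\breve F}}O_{\breve E}$, i.e.\ it would prove Conjecture \ref{conjDr} for ramified $F$, which the authors explicitly cannot do. That is strong evidence the integral twist does not exist as stated.

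The paper avoids any integral comparison with $r^\circ$. It first observes (via the $\bar k$-equivalence, your step 1) that $\CM^F_r(\bar k)$ is a single point, so $\CM^F_r=\Spf R$ for a local ring $R$. It then identifies the completed local ring $R$ with the completion of the local model $\BM^F_r$ at a $\bar k$-point, and proves (Lemma \ref{LTlocmod}) that $\BM^F_r\otimes_{O_E}\bar k\simeq\BP^{n-1}_{\bar k}$ — this is where the $\bigwedge^2$-Eisenstein condition enters, through Lemma \ref{Eisen2l} with $s=1$, exactly as you anticipated — whence $\BM^F_r$ is flat with smooth fibers of dimension $n-1$, hence smooth. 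Formal smoothness plus the single closed point gives $R\simeq O_{\breve E}[[t_1,\ldots,t_{n-1}]]$ directly. The moral is that the Lubin--Tate case works integrally not because the display twist extends over $O_{\breve E}$, but because the local model happens to be smooth (unlike the Drinfeld local model, whose special fiber has $n$ components); you should replace your step 2 by this local-model argument.
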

We will show now how this follows from the results of previous
sections. The Kottwitz condition follows as in the proof of
Proposition \ref{flatimplies}. 
We first note the following consequence of Corollary
\ref{uniqueLT}.  
\begin{corollary}
 $\CM^F_r (\bar k) = \{ {\rm pt} \}$, hence $\CM^F_r$ is the formal spectrum of a local $O_{\breve{E}}$-algebra $R$ w.r.t. its maximal ideal.\qed
\end{corollary}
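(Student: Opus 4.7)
The plan is to apply the preceding Proposition~\ref{uniqueLT} directly. By definition, an element of $\CM^F_r(\bar k)$ is an isomorphism class of triples $(X, \iota, \rho)$ with $(X, \iota)$ a Lubin-Tate group of type $r$ over $\bar k$ (for $S = \Spec \bar k$, the base change $\bar S = S$ is trivial, so $\rho$ lives over $\bar k$ already) and $\rho\colon X \to \BX$ an $O_F$-linear quasi-isogeny of height zero. First I would invoke the second statement of Proposition~\ref{uniqueLT}: any such $X$ admits an $O_F$-linear isomorphism $\alpha\colon X \isoarrow \BX$, so we may replace the triple by the isomorphic one $(\BX, \iota_\BX, \rho \circ \alpha^{-1})$ and assume $(X,\iota) = (\BX, \iota_\BX)$. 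Then $\rho$ is an $O_F$-linear quasi-isogeny of $\BX$ of height zero, and by the third statement of Proposition~\ref{uniqueLT} it is automatically an $O_F$-linear automorphism of $\BX$; using $\rho^{-1}$ as an isomorphism of triples, $(\BX, \iota_\BX, \rho) \cong (\BX, \iota_\BX, \id)$. Hence $\CM^F_r(\bar k)$ consists of the single class of $(\BX, \iota_\BX, \id)$.

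For the ``hence'' clause, I would use that $\CM^F_r$ is representable by a formal scheme over $\Spf O_{\breve E}$ (this was asserted in the paragraph preceding the theorem and is established by the same Rapoport--Zink-style argument, via a local-model diagram as in Section~\ref{s:localmod}). In particular $\CM^F_r$ is locally formally of finite type over $O_{\breve E}$, so its underlying reduced scheme $(\CM^F_r)_\red$ is locally of finite type over $\bar k$, and its set of $\bar k$-rational points coincides with $\CM^F_r(\bar k)$. By the first paragraph this is a singleton, so $(\CM^F_r)_\red = \Spec \bar k$. Consequently $\CM^F_r$ has a unique closed point, which forces $\CM^F_r = \Spf R$ for a complete local $O_{\breve E}$-algebra $R$ with maximal ideal $\fkm$ and residue field $\bar k$, as claimed.

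The core of the argument is genuinely just applying Proposition~\ref{uniqueLT} to each of the three data $X$, $\iota$, and $\rho$ in turn. The only mild subtlety is the passage from the set-theoretic statement about $\bar k$-points to the ring-theoretic structure of $\CM^F_r$; this rests on the background fact that the moduli functor is representable by a sufficiently well-behaved (locally formally of finite type, $\pi$-adic) formal scheme, rather than on anything specific to the Lubin-Tate situation.
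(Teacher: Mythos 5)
Your proof is correct and is exactly the argument the paper intends: the corollary is stated with an immediate \qed as a direct consequence of Proposition \ref{uniqueLT}, and your unwinding (normalize $X$ to $\BX$ by the uniqueness statement, then use that a height-zero quasi-isogeny is an isomorphism to trivialize $\rho$) is the implicit content. The passage from the singleton set of $\bar k$-points to $\CM^F_r=\Spf R$ via representability by a locally formally of finite type formal scheme is likewise the intended, if unstated, justification.
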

In order to complete the proof of Theorem \ref{LTtheorem}, it remains
to show that $\CM^F_r$ is formally smooth of relative dimension $n-1$
over $O_{\breve{E}}$. This will follow from the theory of local models.  

Let $V$ be a $F$-vector space of dimension $n$ and $\Lambda$ an
$O_F$-lattice in $V$. Let $\BM^F_r$ be the functor on $({\rm Sch} \, / O_E)$ such that 
\begin{equation*}
\begin{aligned}
\BM^F_r (S) = \{ \CF \subset \Lambda \otimes_{\BZ_p} \CO_S \mid O_F\text{ -stable
direct summand such that }\\
\text{ $(\Lambda \otimes_{{\BZ_p}} \CO_S)/\CF$ satisfies $({\bf R}^F_r)$ and $({\bf E}_r^F)$}\} .
\end{aligned}
\end{equation*}

Then $R$ is isomorphic to the completion of $\BM^F_r$ at a point of $\BM^F_r (\bar k)$. Hence Theorem \ref{LTtheorem} follows from the next
lemma.
\begin{lemma}\label{LTlocmod}
 $\BM^F_r$ is smooth of relative dimension $n-1$ over $O_E$. In fact,
 $\BM^F_r \otimes_{O_E} \bar k\simeq \BP^{n-1}_{\bar k}$. 
\end{lemma}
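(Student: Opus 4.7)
The plan is to construct an $O_E$-scheme isomorphism $\BM^F_r\simeq\BP(\Lambda\otimes_{O_F,\varphi_0}O_E)\cong\BP^{n-1}_{O_E}$ by sandwiching the universal $\CF_{\psi_0}$ between two explicit submodules; both assertions of the lemma will follow immediately.

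The idempotent decomposition $O_{F^t}\otimes_{\BZ_p}O_E=\prod_{\psi\in\Psi}O_E$, base-changed to $\CO_S$, induces $\Lambda\otimes_{\BZ_p}\CO_S=\bigoplus_{\psi\in\Psi}\Lambda_{\psi,S}$ with each $\Lambda_{\psi,S}$ locally free over $\CO_S[T]/Q_\psi(T)$ of rank $n$, and $\CF=\bigoplus_\psi\CF_\psi$. For $\psi\neq\psi_0$, the Eisenstein condition makes $\Lambda_{\psi,S}/\CF_\psi$ a quotient of $\Lambda_{\psi,S}/Q_{A_\psi}(\pi)\Lambda_{\psi,S}$, which is $\CO_S$-locally free of rank exactly $a_\psi n$. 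Since the rank condition $({\bf R}_r^F)$ demands this rank, the surjection is an isomorphism and $\CF_\psi=Q_{A_\psi}(\pi)\Lambda_{\psi,S}$ is uniquely determined.

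It remains to analyze $\CF_{\psi_0}$. The first Eisenstein condition immediately gives $(Q_0Q_{A_{\psi_0}})(\pi)\Lambda_{\psi_0,S}\subset\CF_{\psi_0}$. For the opposite inclusion $\CF_{\psi_0}\subset Q_{A_{\psi_0}}(\pi)\Lambda_{\psi_0,S}$ I would apply Lemma \ref{Eisen2l} in dualized form, taking $W=(\Lambda_{\psi_0,S})^\vee$, $f=Q_{A_{\psi_0}}(\pi)^\vee$, and $V=(\Lambda_{\psi_0,S}/\CF_{\psi_0})^\vee=\CF_{\psi_0}^\perp\subset W$. Using the factorization $Q_{\psi_0}=Q_{A_{\psi_0}}\cdot Q_0Q_{B_{\psi_0}}$ one computes $\ker Q_{A_{\psi_0}}(\pi)=(Q_0Q_{B_{\psi_0}})(\pi)\Lambda_{\psi_0,S}\cong(\CO_S[T]/Q_{A_{\psi_0}}(T))^n$, locally free of rank $a_{\psi_0}n$; dually, $\Coker f$ is locally free of rank $r=a_{\psi_0}n$. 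With $m=\rank V=a_{\psi_0}n+1$ and hence $s=m-r=1$, the hypothesis $\bigwedge^{s+1}f|_V=0$ is exactly the second Eisenstein condition $\bigwedge^2Q_{A_{\psi_0}}(\pi)|_{\Lambda_{\psi_0,S}/\CF_{\psi_0}}=0$. Lemma \ref{Eisen2l} then yields $\ker f\subset V$, which upon taking orthogonals in $\Lambda_{\psi_0,S}$ gives the desired $\CF_{\psi_0}\subset Q_{A_{\psi_0}}(\pi)\Lambda_{\psi_0,S}$.

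With the sandwich in place, the datum of $\CF_{\psi_0}$ reduces to that of a direct summand $\overline{\CF}:=\CF_{\psi_0}/(Q_0Q_{A_{\psi_0}})(\pi)\Lambda_{\psi_0,S}$ of the quotient $Q_{A_{\psi_0}}(\pi)\Lambda_{\psi_0,S}/(Q_0Q_{A_{\psi_0}})(\pi)\Lambda_{\psi_0,S}$. Multiplication by $Q_{A_{\psi_0}}(\pi)$ defines an $O_F$-equivariant map
$$
\Lambda\otimes_{O_F,\varphi_0}\CO_S=\Lambda_{\psi_0,S}/Q_0(\pi)\Lambda_{\psi_0,S}\;\xrightarrow{\sim}\; Q_{A_{\psi_0}}(\pi)\Lambda_{\psi_0,S}/(Q_0Q_{A_{\psi_0}})(\pi)\Lambda_{\psi_0,S},
$$
which is surjective by construction and injective because $\ker Q_{A_{\psi_0}}(\pi)=(Q_0Q_{B_{\psi_0}})(\pi)\Lambda_{\psi_0,S}\subset Q_0(\pi)\Lambda_{\psi_0,S}$; both sides are $\CO_S$-locally free of rank $n$. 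A rank count forces $\overline{\CF}$ to be a rank-$(n-1)$ direct summand, i.e., a hyperplane in $\Lambda\otimes_{O_F,\varphi_0}\CO_S$. This defines the morphism $\BM^F_r\to\BP^{n-1}_{O_E}$, and the preimage construction provides the inverse. The main obstacle throughout is the dualized application of Lemma \ref{Eisen2l} (together with verifying that $\Coker f^\vee$ is genuinely locally free of the expected rank); the remaining verifications are straightforward linear algebra over the polynomial quotients $\CO_S[T]/Q_\psi(T)$.
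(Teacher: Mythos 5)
Your proof is correct, but it takes a genuinely different --- and in fact stronger --- route than the paper's. The paper only establishes the isomorphism of the geometric special fiber with $\BP^{n-1}_{\bar k}$: it base-changes to $\bar k$, where $Q_{A_{\psi_0}}(T)$ and $(Q_0Q_{A_{\psi_0}})(T)$ degenerate to $T^{a_0}$ and $T^{a_0+1}$, applies the dualized Lemma \ref{Eisen2l} there to sandwich $\CF_0$ between $\Im \pi^{a_0+1}$ and $\Im \pi^{a_0}$, and then deduces smoothness indirectly: flatness is obtained as in Corollary \ref{locmodflat} (properness, equality of the fiber dimensions, reducedness of the special fiber), and flat with smooth fibers gives smooth. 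You instead run the sandwich argument integrally over $O_E$, exploiting that $\Lambda_{\psi,S}$ is free of rank $n$ over $\CO_S[T]/Q_\psi(T)$ and that the monic factorizations $Q_\psi=Q_{A_\psi}Q_{B_\psi}$ and $Q_{\psi_0}=Q_{A_{\psi_0}}\cdot(Q_0Q_{B_{\psi_0}})$ make the kernel, image and cokernel of $Q_{A_\psi}(\pi)$ locally free direct summands of the expected ranks; this pins down $\CF_\psi$ uniquely for $\psi\neq\psi_0$ and yields a direct isomorphism $\BM^F_r\simeq\BP(\Lambda\otimes_{O_F,\varphi_0}O_E)$ over $O_E$, from which both assertions of the lemma (and flatness, and the independence of the choice of $\pi$) fall out at once, with no recourse to the flat-closure argument. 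Your dualized use of Lemma \ref{Eisen2l} is exactly parallel to the paper's in section \ref{s:localmod}, just performed before reduction to the special fiber. The one step you should write out is the verification that the inverse map lands in $\BM^F_r$: given a corank-one direct summand $\overline{\CF}$ with preimage $\CF_{\psi_0}$, the image of $Q_{A_{\psi_0}}(\pi)$ acting on $\Lambda_{\psi_0,S}/\CF_{\psi_0}$ equals $Q_{A_{\psi_0}}(\pi)\Lambda_{\psi_0,S}/\CF_{\psi_0}$, a locally free direct summand of rank one, so $\bigwedge^2$ of that endomorphism vanishes and the second Eisenstein condition holds; the remaining conditions are immediate.
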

\begin{proof}The last statement implies the first. Indeed, then the
  general and the special fibre are smooth and irreducible of the same
  dimension $n-1$. The flatness of $\BM^{F}_r$ follows as in
  Corollary \ref{locmodflat}. This proves  smoothness.  

To study the geometric special fiber $\overline{\BM^F} = \BM^F \otimes_{O_E} \bar k$, we follow the method of section \ref{s:localmod}.
Let $W_0 = \Lambda \otimes_{O_{F^t}, \psi_0} \bar k$. Let $a_0 = \vert
A_{\psi_0} \vert$. Then, as in \eqref{cond12}, we have an identification 
\begin{equation*}
\begin{aligned}
\overline{\BM^F} (S) = \{ \CF_0 \subset W_0 \otimes_{\bar k} \CO_S \mid \pi\text{-stable direct summand,}\\
\text{ $\rank\,W_{0, S}/ \CF_0 = a_0n + 1$, and $1'), 2')$} \}
\end{aligned}
\end{equation*}
Here we have set $W_{0, S}=W_0\otimes_{\bar k}\CO_S$ and $1')$ and $2')$   are as in \eqref{cond12}, i.e.,
\begin{equation}
\begin{aligned}
1')  &\quad \pi^{(a_0+1)}\vert ({  W_{0, S}/ \CF_0})  &= 0 \, \\
2') &\wedge^{2} (\pi^{a_0}{\vert (W_{0, S}/\CF_0}))&= 0 .
\end{aligned}
\end{equation} 

Applying Lemma \ref{Eisen2l}, we obtain that for $\CF_0 \in
\overline{\BM^F} (S)$, there is a chain of inclusions,  
\begin{equation*}
 W^{a_0+1}_0 \otimes_{\bar k} \CO_S \subset \CF_0 \subset W^{a_0}_0
 \otimes_{\bar k} \CO_S \, , 
\end{equation*}
where $W^{a_0}_0 = \Im \pi^{a_0}$, resp. $W^{a_0 + 1}_0 = \Im \pi^{a_0
  + 1}$, is a $\bar k$-subspace of dimension $(e-a_0) n$,
resp. $(e-a_0 - 1) n$ of $W_0$. Associating to $\CF_0$ the submodule
$\CF_0 / (W^{a_0+1}_0\otimes_{\bar k} \CO_S)$ of $(W^{a_0 }_0 /
W^{a_0+1}_0) \otimes_{\bar k} \CO_S$, we obtain a  direct summand of
codimension one, i.e., an $S$-valued point of $\BP (W^{a_0}_0 /
W^{a_0+1}_0)$. Since this association is functorial 
and bijective, the last assertion of Lemma \ref{LTlocmod} follows.
\end{proof}



\bigskip
\obeylines
Mathematisches Institut der Universit\"at Bonn  
Endenicher Allee 60 
53115 Bonn, Germany
email: rapoport@math.uni-bonn.de

\bigskip
Fakult\"at f\"ur Mathematik
Universit\"at Bielefeld
Postfach 100131
33501 Bielefeld, Germany
email: zink@math.uni-bielefeld.de


\begin{thebibliography}{AB3}
\bibitem{A} T. Ahsendorf, \textsl{$\CO$-displays and $\pi$-divisible formal $\CO$-modules}, Thesis Bielefeld, 2011. 
\bibitem{AZ} T. Ahsendorf, Ch. Cheng, Th. Zink, \textsl{$\CO$-displays and $\pi$-divisible formal $\CO$-modules}, J. of Algebra {\bf 457} (2016), 129--193. 
\bibitem{BC} J.-F. Boutot and H. Carayol, \textsl{ Uniformisation $p$-adique des courbes de Shimura: les th\'eor\`emes
de Cerednik et de Drinfeld},  in: Courbes modulaires et courbes de Shimura, Ast\'erisque {\bf 196--197} (1991),  45--158. 
\bibitem{Ch} G. Chenevier, \textsl{The $p$-adic analytic space of pseudocharacters of a profinite group and pseudorepresentations over arbitrary rings}, in: Automorphic forms and Galois representations. Vol. 1, 221--285, London Math. Soc. Lecture Note Ser., {\bf 414}, Cambridge Univ. Press, Cambridge, 2014


\bibitem{D}  V. G. Drinfeld, \textsl{Coverings of p-adic symmetric regions}, Funct. Anal. Appl. {\bf 10} (1977), 29--40.

\bibitem{F} L. Fargues, \textsl{Cohomologie des espaces de modules de groupes $p$-divisibles et correspondances de Langlands locales}, in: Vari\'et\'es de Shimura, espaces de Rapoport-Zink et correspondances de Langlands locales, Ast\'erisque {\bf 291} (2004), 1--199.

\bibitem{G} U. G\"ortz, \textsl{On the flatness of models of certain Shimura varieties of PEL type},
Math. Ann. {\bf 321} (2001), 689-727.
\bibitem{GW} U. G\"ortz, T.Wedhorn,  Algebraic geometry I. Schemes with examples and exercises. Advanced Lectures in Mathematics. Vieweg + Teubner, Wiesbaden, 2010. Vieweg Verlag, Wiesbaden, 2010.
\bibitem{H} Ph. Hartwig, \textsl{Kottwitz-Rapoport and $p$-rank strata in the reduction of Shimura varieties of PEL type},  Ann. Inst. Fourier (Grenoble) {\bf 65} (2015), no. 3, 1031--1103.

 \bibitem{K} R. E. Kottwitz, \textsl{ Isocrystals with additional structure. II}, Compositio Math. {\bf 109} (1997), no. 3, 255--339. 
 
\bibitem{KR} S. Kudla and M. Rapoport, \textsl{New cases of $p$-adic uniformization}, Ast\'erisque {\bf 370} (2015), 207--241. 


\bibitem{KR3} S. Kudla and M. Rapoport, \textsl{ An alternative description of the Drinfeld p-adic half-plane}, Annales de l'Institut Fourier {\bf 64} (2014),  1203--1228. 

\bibitem{KRZ} S. Kudla, M. Rapoport and Th. Zink, \textsl{ On Cherednik uniformization}, in preparation.

\bibitem{L} E. Lau, \textsl{Displays and formal p-divisible groups},  Invent. Math. {\bf 171} (2008), no. 3, 617--628.

\bibitem{Mi} A. Mihatsch, \textsl{Relative unitary RZ-spaces and the Arithmetic Fundamental Lemma}, arXiv:1611.06520 

\bibitem{Ne} J. Nekovar, \textsl{Level raising and anticyclotomic Selmer groups for Hilbert modular forms of weight two}, Canad. J. Math. {\bf 64} (2012), 588--668.
\bibitem{PR.I}
G. Pappas and M. Rapoport, 
\textsl{Local models in the ramified case. I: The EL-case}, 
J. Alg. Geom. {\bf 12} (2003), 107--145.

\bibitem{PRS}
G. Pappas, M. Rapoport and B. Smithling, 
\textsl{ Local models of Shimura varieties, I. Geometry and combinatorics}, in:
Handbook of moduli (eds. G. Farkas and I. Morrison), vol. III, 135--217,
Adv. Lect. in Math. {\bf 26}, International Press, 2013. 

\bibitem{PZ} G. Pappas and  X. Zhu, \textsl{Local models of Shimura varieties and a conjecture of Kottwitz},  Inventiones math. {\bf 194} (2013), 147--254.

\bibitem{RV} M. Rapoport and E. Viehmann, \textsl{Towards a theory of local Shimura varieties}, M\"unster J. of Math. {\bf 7} (2014), 273--326. 
\bibitem{RZ}
M. Rapoport and T. Zink,  Period spaces for $p$-divisible groups. Annals  
of Mathematics Studies, {\bf 141}, Princeton University Press,  
Princeton, 1996.
\bibitem{RSZ3} M. Rapoport, B. Smithling and W. Zhang, \textsl {Arithmetic diagonal cycles on unitary Shimura varieties}, in preparation. 
\bibitem{Ri} K. Ribet, \textsl{On modular representations of $\Gal(\ov \BQ/\BQ)$ arising from modular forms}, Invent. math. {\bf100} (1990), 431--476. 
\bibitem{SW} P. Scholze and J. Weinstein, \textsl{Moduli of $p$-divisible groups}, Cambridge J. Math. {\bf 1} (2013), 145-237. 


\bibitem{Z} Th. Zink, \textsl{The display of a formal $p$-divisible
    group}, in: Cohomologies $p$-adiques et applications
  arithm\'etiques, I. Ast\'erisque {\bf 278} (2002), 127--248. 

\end{thebibliography}
\end{document}